\documentclass[onefignum,onetabnum]{siamart171218}

\usepackage{lipsum}
\usepackage{amsfonts}
\usepackage{graphicx}
\usepackage{hyperref}
\usepackage{epstopdf}
\usepackage{algorithm}
\usepackage{algorithmic}
\usepackage{enumerate}
\usepackage{titlesec}
\usepackage{subcaption}

\usepackage{amsmath}
\usepackage{amssymb}
\usepackage{algorithm}
\usepackage{algorithmic}
  \usepackage{hyperref}  

\usepackage{dsfont}
\ifpdf
  \DeclareGraphicsExtensions{.eps,.pdf,.png,.jpg}
\else
  \DeclareGraphicsExtensions{.eps}
\fi

\newsiamremark{remark}{Remark}
\crefname{hypothesis}{Hypothesis}{Hypotheses}
\newsiamthm{claim}{Claim}
\newsiamthm{definirion}{Definition}
\newsiamthm{hypothesis}{Assumption}
\usepackage{comment}
\usepackage{tikz}
\usetikzlibrary{arrows.meta, positioning}
\usepackage{caption}

\usepackage{geometry}
\headers{Single loop method for quadratic minmax optimization}{Stefano Cipolla, Oliver Stein, and Alain Zemkoho}

\title{{A single loop method for quadratic minmax optimization}\thanks{Submitted to the editors DATE.
\funding{The work of the third author is partly funded by an Alexander von Humboldt Research Fellowship for  Experienced Researchers within the Continuous Optimization Chair at the Institute for Operations Research, Karlsruhe Institute of Technology (KIT).}}} 

\author{Stefano Cipolla \thanks{School of Mathematical Sciences, University of Southampton 
  (\email{s.cipolla@soton.ac.uk}).}
\and Oliver Stein \thanks{Institute for Operations Research (IOR), Karlsruhe Institute of Technology (KIT) 
  (\email{stein@kit.edu}).}
\and Alain Zemkoho \thanks{School of Mathematical Sciences, University of Southampton 
  (\email{a.b.zemkoho@soton.ac.uk}).}}

\usepackage{amsopn}
\DeclareMathOperator{\diag}{diag}

\makeatletter
\newcommand*{\addFileDependency}[1]{
  \typeout{(#1)}
  \@addtofilelist{#1}
  \IfFileExists{#1}{}{\typeout{No file #1.}}
}
\makeatother

\newcommand{\R}{{\mathbb{R}}}
\newcommand{\1}{\mathds{1}}

\DeclareMathOperator{\kernel}{ker}
\DeclareMathOperator{\inertia}{In}

\newsiamthm{example}{Example}

\AtBeginDocument{\colorlet{default}{.}} 
\usepackage{appendix}

\ifpdf
\hypersetup{
   pdftitle={A single loop method for quadratic minmax optimization with coupled inner constraints},
  pdfauthor={Stefano Cipolla, Oliver Stein, and Alain Zemkoho}
}
\fi

\usepackage{xr}

\begin{document}

\maketitle
%
\begin{abstract}
We consider a quadratic minmax problem with coupled inner constraints and propose a method to compute a class of stationary points. To motivate the need to compute such stationary points, we first show that they are meaningful, in the sense that they can be locally optimal for our problem under suitable{non-degeneracy} conditions. Then based on a suitable  log barrier function, we build an infeasible interior point-type \textit{single loop method} (which does not explicitly distinguish between the outer and inner problem) and prove that a non-degenerate stationary point is an attraction point as the algorithm moves along the designed central path. We show in particular that our method is polynomial in the special case where the inner feasible set of our constrained minmax problem is independent from outer variables. Our numerical experiments, on both synthetic data and a class of min cost flow problems, showcase the behavior of our method and how it outperforms existing algorithms from the literature in terms of the quality of the computed stationary points.
\end{abstract}

\begin{keywords}
  minmax optimization; stationarity;  non-degeneracy; interior point method. 
\end{keywords}

\begin{AMS}
 	90C20; 90C31; 90C33; 90C47;  90C51.
\end{AMS}


\section{Introduction.}\label{sec:intro}
We consider the quadratic minmax optimization problem
\begin{equation}\label{eq:MinMax}\tag{$P$}
	\min_{x \in X} \max_{y \in Y(x)} f(x,y),
\end{equation}
where the{convex-concave} objective function $f :\mathbb{R}^n\times \mathbb{R}^m \rightarrow \mathbb{R}$ is given by
\begin{equation}\label{eq:f(x,y)}
    f(x,y):=\frac{1}{2}\begin{bmatrix}
	x^\top & y^\top
\end{bmatrix} \underbrace{ \left[\begin{array}{cc}
   Q_{11}  &  Q_{12}\\
   Q_{12}^\top  & -Q_{22}
\end{array} \right]}_{=:Q}\begin{bmatrix}
	x \\ y
\end{bmatrix}  +  \begin{bmatrix} c_x^\top & c_y^\top
\end{bmatrix} \begin{bmatrix}
	x \\ y
\end{bmatrix}
\end{equation}
with the data vectors $c_x\in \mathbb{R}^n$, $c_y\in \mathbb{R}^m$, a matrix $Q_{12}\in\R^{n\times m}$ as well as symmetric positive semi-definite matrices $Q_{11}\in \mathbb{R}^{n\times n}$ and $Q_{22}\in \mathbb{R}^{m\times m}$. The \textit{min} and \textit{max} operators represent the \textit{outer} and \textit{inner} optimization problems, respectively. Analogously, the \textit{outer} and \textit{inner} feasible sets of problem \eqref{eq:MinMax} are respectively defined by
\begin{equation}\label{eq:XandY}
\begin{array}{rll}
X&:=&\left\{x \in \mathbb{R}^n:\;\, A_Ox = b_O, \;\; x \geq 0\right\},\\[1ex]
Y(x)&:=&\{ y \in \mathbb{R}^m:\;\,  A_Ix+B_Iy = b_I, \;\; y \geq 0 \},
\end{array}
\end{equation}
  {where inequalities between vectors are meant in the component-wise sense.} The corresponding data matrices  are such that 
$A_O\in \mathbb{R}^{p\times n}$, $b_O\in \mathbb{R}^p$,  $A_I\in \mathbb{R}^{q\times n}$, $B_I\in \mathbb{R}^{q\times m}$, and $b_I\in \mathbb{R}^q$ with the dimensions $p< n$ and $q< m$. 

There has been a regained interest in minmax optimization in recent years, thanks to a wide range of applications discovered in artificial intelligence; see, e.g., \cite{rafique2022weakly,farnia2016minimax,jin2020local}. The specific model in \eqref{eq:MinMax} has many applications, including in support vector machines, adversarial learning, interdiction games, and robust quadratic programming problems \cite{liu2015projection,razaviyayn2020nonconvex,MR4654111}. 

Clearly, \eqref{eq:MinMax} is a special class of convex-concave minmax problems. However, it is still quite difficult to solve due to the presence of the outer variable in the inner feasible set; more precisely, $A_Ix+B_Iy = b_I$ corresponds to the coupled constraint. Hence, overall, the problem is a quadratic version of the minmax problem with coupled inner constraint. Problem \eqref{eq:MinMax} was shown in \cite{MR4654111} to be NP-hard. One of the key challenges in addressing the problem is that the famous von Neumann minmax theorem cannot hold in this case, as viewing \eqref{eq:MinMax} as a game problem, the order of play becomes crucial; the outer player has to play first by selecting $x\in X$, and then, the inner player can react by selecting $y\in Y(x)$. This is the reason why some papers (see, e.g., \cite{jin2020local,goktas2021convex}) refer to such problems where the minmax theorem cannot hold as Stackelberg games. Problem \eqref{eq:MinMax} is strictly speaking not a Stackelberg game problem. However, interpreting it as that of finding 
\begin{equation}\label{eq:Stackelberg-Interpret}
   x\in \underset{\quad\;\; x\in X}{\arg \min}\; f(x, y(x)) \;\,\mbox{ with }\;\;\; y(x)\in \underset{\quad y\in Y(x)}{\arg \max }\; f(x, y) 
\end{equation}
can be seen as a special class of Stackelberg (or \textit{bilevel optimization}) problem, where the leader and follower have the same objective function, but which are optimized in opposite directions; i.e.,  w.r.t. $x$ (resp. $y$) for the upper-level (resp. lower-level) player. 
We do not use the specific concept of solution \eqref{eq:Stackelberg-Interpret} in this paper. To introduce the notion of optimal solution used here, consider the optimal value function 
\begin{equation}\label{VarPhi}
    \varphi(x):= \max_{y \in Y(x)} f(x,y) 
\end{equation}
associated to the inner  optimization problem (parameterized by $x\in X$)   
\begin{equation}\label{eq:InnerProblem}\tag{\text{$P_I(x)$}}
	\max_{y}\ f(x,y) \ \text{ s.t. }\ A_Ix+B_Iy-b_I=0,\ y\geq0.
\end{equation}
To focus our attention on the main ideas, we assume throughout that the corresponding inner optimal solution set is non-empty for all outer variables; i.e., 
\begin{equation}\label{ass:NonEmptyness}
   S(x):=\underset{y\in Y(x)}{\arg{\max}}\; f(x, y) \neq \emptyset \;\,\mbox{ for all } \;\, x\in X.
\end{equation} 
In view of \eqref{ass:NonEmptyness}, $\varphi$ is a real-valued function on $X$.
Based on this interpretation, the outer problem in  \eqref{eq:MinMax} can be equivalently rewritten as
\begin{equation}\label{eq:OuterProblem}\tag{\text{$P_O$}}
	\min_{x}\ \varphi(x) \ \text{ s.t. }\ A_Ox-b_O=0,\ x\geq0.
\end{equation}
Hence, we use the following concept of optimal solution throughout this paper. 
\begin{definition}\label{def:localOptimalSolution}
A point $\bar x$ will be said to be a local optimal solution of problem \eqref{eq:MinMax}  if there exists a neighborhood $U$ of the point such that 
\begin{equation}\label{def:localOptSol}
\varphi(\bar x) \leq \varphi(x) \;\,\mbox{ for all }\;\, x\in U\cap X.
\end{equation}
\end{definition}
It goes without saying that if $U$ is the whole space $\mathbb{R}^n$ in \eqref{def:localOptSol}, then the point $\bar x$ will be said to be a global optimal solution of problem \eqref{eq:MinMax}.  Other concepts of optimal solutions for minmax problems, where the order of play is important, can be found in \cite{jin2020local,dai2024rate}.

The following concept of stationarity, which will be at the center of the study in this paper, is tightly linked to the notion of local optimal solution in{Definition \ref{def:localOptimalSolution}}.  In the next definition, the symbol ``$\circ$'' represents the Hadamard product for two vectors; i.e.,  
$a\circ b:= [a_1b_1,\ldots, a_nb_n]^\top$ for $a,b\in\R^n$, which we use for reasons to become apparent later.
\begin{definition}\label{Def:StationarityConcept}
A point $\bar x\in X$ is called stationary for problem \eqref{eq:MinMax} if for some vector $\bar y\in S(\bar x)$,  there exist outer and inner multipliers $(\bar \lambda_O,\, \bar s_O)\in\R^p\times \R^n$ and  $(\bar\lambda_I, \, \bar s_I)\in\R^q\times \R^m$,  respectively, such that the vector $(\bar x,\bar y,\bar \lambda_O,\bar s_O,\bar\lambda_I,\bar s_I)$ solves 
the joint outer and inner KKT systems
       \begin{align}\label{eq:KKTjoint}
        \begin{split}
        \left.\begin{array}{rr}
            Q_{11}x+Q_{12}y+c_x+A_I^\top\lambda_I+A_O^\top\lambda_O-s_O&=0,\\
            A_Ox-b_O &=0,\\
            x \circ s_O &=0,\\
            x, s_O&\geq0,
            \end{array}\right\}\mbox{outer KKT system}\\
        \left.\begin{array}{rr}
            -Q_{22}y+Q_{12}^\top x+c_y+B_I^\top\lambda_I+s_I&=0,\\
        A_Ix+B_Iy-b_I&=0,\\
            y \circ s_I &=0,\\
            y, s_I&\geq0.
        \end{array}\right\}\mbox{inner KKT system}
        \end{split}
        \end{align}
\end{definition}
The inner part of \eqref{eq:KKTjoint} corresponds to the KKT conditions of the inner problem \eqref{eq:InnerProblem} for given $x\in X$. Indeed, in view of our concavity assumption on the objective function of \eqref{eq:InnerProblem}, and the polyhedrality of its feasible set, the condition $y\in S(x)$ is equivalently characterized by $y$ being a KKT point of \eqref{eq:InnerProblem}, i.e., satisfying the inner KKT system   with corresponding multipliers $\lambda_I$, $s_I$. In the following we shall abbreviate the set of points $(y,\lambda_I,s_I)$ solving the inner KKT system for given $x\in X$ by $
KKT_I(x)$.  Secondly, apart from the pair $(x, y)$, the outer and inner KKT systems are only connected by the Lagrange multiplier $\lambda_I$ associated to the coupled constraint in \eqref{eq:MinMax}. If there are no coupling constraint (i.e., $A_I=0$) and no coupling variables (i.e., $Q_{12}=0$), both KKT systems are completely independent from each other. 

  {We shall frequently return to the following running example to illustrate our results.}

  {
\begin{example}\label{ex:running}
The problem data
\begin{align*}
    Q_{11}=\begin{bmatrix}
        1 & 0\\ 0 & 0
    \end{bmatrix},\quad
    Q_{22}=\begin{bmatrix}
        4/3 & 0 & 0\\ 0 & 0 & 0\\ 0 & 0 & 0
    \end{bmatrix},\quad
    Q_{12}=\begin{bmatrix}
        0 & 0 & 0\\ 0 & 0 & 0
    \end{bmatrix},\quad
    c_x=\begin{bmatrix}
        -\delta\\0
    \end{bmatrix},\quad
    c_y=\begin{bmatrix}
        8/3\\0\\0
    \end{bmatrix},
\end{align*}
\begin{align*}
    A_O=\begin{bmatrix}
        1 & 1
    \end{bmatrix},\quad
    b_O=4,\quad
    A_I=\begin{bmatrix}
        1 & 0\\ -1 & 0
    \end{bmatrix},\quad
    B_I=\begin{bmatrix}
        -1 & 1 & 0\\ 1 & 0 & 1
    \end{bmatrix},\quad
    b_I=\begin{bmatrix}
        0\\1
    \end{bmatrix}
\end{align*}
with parameter $\delta> 4/3$ yield
\begin{align*}
    X&=\{x\in\mathbb{R}^2\mid x_1+x_2=4,\ x\geq0\},\\
    Y(x)&=\{y\in\mathbb{R}^3\mid x_1-y_1+y_2=0,\ -x_1+y_1+y_3=1,\ y\geq0\}\ \text{ for }\ x\in X,\\
    f(x,y)&=\frac12x_1^2-\delta x_1-\frac23 y_1^2+\frac83 y_1.
\end{align*}
The point $\bar x=[4,0|^\top$ is stationary for \eqref{eq:MinMax}, given that $(\bar x,\bar y,\bar \lambda_O,\bar s_O,\bar\lambda_I,\bar s_I)$, with the following values, solves the KKT system \eqref{eq:KKTjoint}:
\begin{align*}
    \bar x=\begin{bmatrix}
        4\\0
    \end{bmatrix},\quad\bar y=\begin{bmatrix}
        4\\0\\1
    \end{bmatrix},\quad\bar\lambda_O=\delta-4/3,\quad\bar s_O=\begin{bmatrix}
        0\\\delta-4/3
    \end{bmatrix},\quad\bar\lambda_I=\begin{bmatrix}
        -8/3\\0
    \end{bmatrix},\quad\bar s_I=\begin{bmatrix}
        0\\8/3\\0
    \end{bmatrix}.
\end{align*}
\end{example}
}

The joint KKT system \eqref{eq:KKTjoint} is not new, as it has been derived in various papers, including \cite{zemkoho2014simple}, where some relevant references can be found. More recently, these conditions have been obtained in \cite{dai2024optimality} and have also been used for the development of algorithms as it will become clear in Subsection~\ref{sec:related}. 
Before proceeding, it is important to note that the primary objective of this paper is to develop a tractable and robust algorithm to compute stationary points in the sense of Definition \ref{Def:StationarityConcept}. On the other hand, before discussing any algorithmic approach, we first address the following key questions, which do not seem to have been rigorously addressed for the quadratic minmax program in \eqref{eq:MinMax}: 
(i) How can we show that a local optimal solution of problem \eqref{eq:MinMax}, in the sense of  \eqref{def:localOptSol}, is a stationary point?
(ii) Conversely, under what conditions can a stationary point be locally optimal for problem \eqref{eq:MinMax}, in the sense of  \eqref{def:localOptSol}?
(iii) How is our concept of stationarity (in Definition \ref{Def:StationarityConcept}) connected to a notion of saddle point associated to the minmax problem \eqref{eq:MinMax}?
We address these questions in this paper, and subsequently provide a framework to ensure that a stationary point is \textit{non-degenerate} (definition to be provided). 
We subsequently establish that the non-degeneracy of a stationary point in the sense of Definition \ref{Def:StationarityConcept} ensures that such a point serves as an attraction point for our interior point method as it traverses the central path. 

\subsection{Related work.}\label{sec:related} Problem \eqref{eq:MinMax} is a special case of the problem 
\begin{equation}\label{eq:MinMax-General}
	\min_{x \in \mathcal{X}} \max_{y \in \mathcal{Y}(x)} \mathfrak{f}(x,y),
\end{equation}
where $\mathfrak{f}$ is a general real-valued function and $\mathcal{X}$ and $\mathcal{Y}(x)$ (for $x\in X$) are general subsets of $\mathbb{R}^n$ and $\mathbb{R}^m$, respectively.  Various special cases of this problem have been considered in the literature in recent years, including \cite{zhang2024primal,Yu-HongDai2020296,dai2024rate,dai2024optimality,MR4654111,goktas2021convex,hu2024minimizationapproachminimaxoptimization,lu2024first,lu2023first,vzakovic2000interior,zhang2025iterativeminimaxgamescoupled}, which represent, to the best of our knowledge, the main contributions in the existing literature where the inner feasible set effectively depends on the outer variable. For instance, in \cite{dai2024optimality,zhang2025iterativeminimaxgamescoupled}, $\mathcal{Y}(x)$ is defined by an equality constraint of the form $A_Ix+B_Iy = b_I$, while it is an inequality $A_Ix+B_Iy \leq b_I$ in \cite{zhang2024primal}. The constraint is a general inequality  $g(x, y)\leq 0$ in \cite{goktas2021convex}, and even further generalized in \cite{hu2024minimizationapproachminimaxoptimization}, where we have $g(x, y)\in \mathcal{K}$, where $\mathcal{K}$ is closed convex cone in $\mathbb{R}^m$. 

Apart from \cite{goktas2021convex,vzakovic2000interior}, the main idea in all these papers consists of building a surrogate unconstrained minmax problem for \eqref{eq:MinMax-General} using a Lagrangian \cite{zhang2024primal,Yu-HongDai2020296,dai2024rate,dai2024optimality,zhang2025iterativeminimaxgamescoupled} or some augmented Lagrangian-type function \cite{hu2024minimizationapproachminimaxoptimization,lu2024first,lu2023first}. The resulting problem is then solved by some form of gradient descent ascent--type algorithm, building on techniques to solve unconstrained minmax problems. The article \cite{MR4654111} is rather mainly focused on using the Lagrangian function to build duality results for a class of problem \eqref{eq:MinMax-General}. As for \cite{hu2024minimizationapproachminimaxoptimization}, it is dedicated to the construction of a special parametric minimization problem using the concept of partial forward-backward envelope. The main effort there is to show that the new problem and the corresponding version of  \eqref{eq:MinMax-General} can possess the same \textit{first-order minmax point}. It turns out that this concept, which is the main focus of the analysis there, is simply a compact form of our stationarity notion in Definition \ref{def:localOptimalSolution}. 

For the papers \cite{zhang2024primal,dai2024rate,dai2024optimality,zhang2025iterativeminimaxgamescoupled,lu2024first,lu2023first}, which focus on the development of numerical methods, the main effort is concentrated on estimating the iteration complexity for the corresponding first order method. All these algorithms also aim to compute the corresponding approximate versions of the stationarity system \eqref{eq:KKTjoint}. 
However, for the articles \cite{goktas2021convex,vzakovic2000interior}, they use completely different paths and solution concepts to develop algorithms for special versions of problem \eqref{eq:MinMax-General}. Similarly to the previous papers,  \cite{goktas2021convex} uses a first order method, but instead to compute a type of solution point $(\bar x, \bar y)$ called $(\epsilon, \delta)$--Stackelberg equilibrium, which is defined by the fulfillment of the conditions
\[
\bar y\in \mathcal{Y}(\bar x) \;\mbox{ and }\; \psi(\bar x) -\delta \leq \mathfrak{f}(\bar x, \bar y) \leq \min_{x\in \mathcal{X}} \psi(x) + \epsilon,
\]
 where $\psi$ is the optimal value function of associated to the corresponding inner problem of \eqref{eq:MinMax-General}. Clearly, such a point appears to be some form of approximation of the solution concept in Definition \ref{def:localOptimalSolution}. As for \cite{vzakovic2000interior}, the algorithm developed there, is, in some sense, the closest to the one proposed and studied in this paper. This is mainly for two reasons: first, the method in \cite{vzakovic2000interior} is a second order-type algorithm, similarly to our approach. Secondly, it builds from some concept of central path and interior point-type idea. However, the notion of solution there as well as the overall algorithmic process is completely different from what we do here. 

\subsection{Main contributions.}
The main contributions of this paper can be summarized as addressing the following two questions:
\begin{itemize}
    \item[(Q1)] How is the stationary concept in Definition \ref{Def:StationarityConcept} related to the notion of local optimal solution in Definition \ref{def:localOptimalSolution}?
    \item[(Q2)] How to tractably compute the stationary points in the sense of Definition \ref{Def:StationarityConcept}?
\end{itemize}
The motivation for (Q1) is that computing stationary points is important, but it is useful to know whether such points are meaningful with regards to their potential of being locally optimal for problem \eqref{eq:OuterProblem}. In particular, we show that if a stationary point is such that a{non-degeneracy condition} 
tailored to \eqref{eq:OuterProblem} holds, then this point is a local optimal solution for the problem; cf. Section \ref{sec:Stationary and saddle points}.  

With regards to (Q2), we introduce log barrier functions tailored to problem \eqref{eq:MinMax} that we use to build a central path. We use such a central path to design an infeasible interior point method converging, under a suitable framework provided in Sections \ref{sec:Stationary and saddle points} and \ref{sec:ipm}, to a stationary point in the sense Definition \ref{Def:StationarityConcept}. A key feature of our algorithm is that, unlike in descent-ascent methods, where the outer and inner iterates are computed alternately, we do not distinguish them, and therefore calculate our iterates in a \textit{single loop}. In particular, we show that when the inner feasible set is decoupled (i.e., when $A_I =0$), our method is polynomial; more precisely, we establish the complexity of our algorithm with iteration count $O((n+m)^2|\log(\varepsilon)|)$ to achieve an $\varepsilon$-approximation for a solution of the stationarity conditions in  Definition \ref{Def:StationarityConcept}. Moreover, we apply our method to both synthetic data and a class of min cost flow problems, and we demonstrate favorable performance when compared to related methods from the literature; cf. Section \ref{sec:Numerical experiments}.

\subsection{Organization of the paper.}
In Section~\ref{sec:Stationary and saddle points}, we establish a link between the stationarity concept, the local optimality, and a usual notion of saddle points tailored to problem \eqref{eq:MinMax}.{For the latter analysis,} a nondegeneracy concept is introduced and studied, which also prepares the ground for the convergence analysis of our algorithm, which is introduced in the subsequent section. In particular, in Section \ref{sec:ipm}, we develop an infeasible interior point method that we show to be polynomial in the decoupled case, where $Y$ is independent from $x$. After conducting our numerical experiments in Section \ref{sec:Numerical experiments}, some concluding remarks are provided in Section \ref{sec:conclusions}.

\section{Stationary and saddle points.}\label{sec:Stationary and saddle points}

The first step in this section is to formally link the stationarity concept in Definition \ref{Def:StationarityConcept} and the notion of local optimality for problem \eqref{eq:MinMax} in Definition \ref{def:localOptimalSolution}; cf. Subsection \ref{sect:Derivation of stationarity conditions}. 
In Subsection \ref{sec:meaningfulness}, we construct a sufficient condition for a stationary point to be a \textit{strict} local optimal solution for \eqref{eq:MinMax} in the sense of \eqref{def:localOptSol}. Subsequently,{Subsection~\ref{sec:ndstat} discusses an appropriate non-degeneracy concept for \eqref{eq:MinMax} and, based on this,}  Subsection \ref{Stationarity and local saddle points} presents a relationship between a stationary point and a local saddle point concept for \eqref{eq:MinMax}.


\subsection{An auxiliary quadratic problem and stationarity derivation.}\label{sect:Derivation of stationarity conditions}
In order to be able to derive the stationarity in Definition \ref{Def:StationarityConcept} (as necessary optimality condition of  \eqref{eq:MinMax}) for \textit{free}, as one would expect for any quadratic optimization problem (with affine linear constraints), we exploit the connection between our problem and the following single--level minimization problem:
\begin{equation}\label{eq:QPmodel}\tag{$QP$}
\begin{split}
        \min_{x,y,\lambda_I,s_I}\,&\frac{1}{2}y^\top Q_{22}y+  \lambda_I^\top (A_Ix-b_I)  +   \frac{1}{2}x^\top Q_{11}x+c_x^\top x\\
        \text{s.t. }\ & A_Ox-b_O=0,\ x\geq0,\\
        & -Q_{22}y + c_y +Q_{12}^\top x + B_I^\top\lambda_I +s_I =0,\ s_I \geq0.
\end{split}
\end{equation}

The global and local relationships between this problem and problem \eqref{eq:OuterProblem} rely on a Wolfe duality result for the inner problem \eqref{eq:InnerProblem}. To establish it, we introduce the following notation for each $x\in X$:
\[
\begin{array}{rll}
  q(x,y,\lambda_I,s_I) & := &  \frac{1}{2}y^\top Q_{22}y+  \lambda_I^\top (A_Ix-b_I)  +   \frac{1}{2}x^\top Q_{11}x+c_x^\top x,\\[2ex]
   F(x) &:=&\left\{(y,\lambda_I,s_I)\in \mathbb{R}^m\times \mathbb{R}^q\times \mathbb{R}^m:\right.\\[2ex]
        &  &        \qquad\qquad \left.-Q_{22}y + c_y +Q_{12}^\top x + B_I^\top\lambda_I +s_I =0,\ s_I \geq0\right\}, \\[2ex]   
        S_D(x) & := & \underset{(y,\lambda_I,s_I)\in F(x)}{\arg\min} q(x,y,\lambda_I,s_I),\\[2ex]
   \varphi_D(x) & := & \underset{(y,\lambda_I,s_I)\in F(x)}{\min} q(x,y,\lambda_I,s_I).
  \end{array}
\] 
\begin{lemma}\label{lem:Wolfe}
    The following assertions hold for each $x\in X$:
    \begin{itemize}
    \item[(a)] $\emptyset\neq KKT_I(x)\subseteq S_D(x)$.
    \item[(b)] $\varphi(x)=\varphi_D(x)$.
    \end{itemize}
\end{lemma}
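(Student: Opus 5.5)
The plan is to prove Wolfe duality for the inner problem $P_I(x)$ directly. The key step is a weak duality inequality: for every $y\in Y(x)$ and every $(y',\lambda_I,s_I)\in F(x)$ we want
\[
f(x,y)\le q(x,y',\lambda_I,s_I).
\]
Combined with nonemptiness of $KKT_I(x)$ and the fact that KKT triples attain equality, this gives both (a) and (b).

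Fix $x\in X$. By assumption \eqref{ass:NonEmptyness}, $S(x)\neq\emptyset$. Since $P_I(x)$ is a concave maximization over a polyhedron, the KKT conditions are necessary (Abadie's CQ holds automatically for linear constraints). Hence any $\bar y\in S(x)$ admits multipliers $(\bar\lambda_I,\bar s_I)$ with $(\bar y,\bar\lambda_I,\bar s_I)\in KKT_I(x)$, so $KKT_I(x)\neq\emptyset$. Every element of $KKT_I(x)$ lies in $F(x)$, since the stationarity equation and $s_I\ge0$ are part of the inner KKT system.

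\textbf{Weak duality.} Take $y\in Y(x)$ and $(y',\lambda_I,s_I)\in F(x)$. Concavity of $f(x,\cdot)$ gives
\[
f(x,y)\le f(x,y')+\nabla_y f(x,y')^\top(y-y'),\qquad \nabla_y f(x,y')=-Q_{22}y'+Q_{12}^\top x+c_y=-B_I^\top\lambda_I-s_I .
\]
Substituting, and using $B_Iy=b_I-A_Ix$ together with $s_I^\top y\ge0$, the right-hand side is bounded by
\[
f(x,y')+\lambda_I^\top(A_Ix-b_I)+\lambda_I^\top B_Iy'+s_I^\top y'.
\]
Now expand $f(x,y')$ and eliminate the linear term using $c_y^\top y'=(Q_{22}y'-Q_{12}^\top x-B_I^\top\lambda_I-s_I)^\top y'$. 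The terms $x^\top Q_{12}y'$, $\lambda_I^\top B_Iy'$ and $s_I^\top y'$ cancel, and $-\tfrac12 y'^\top Q_{22}y'+y'^\top Q_{22}y'=\tfrac12 y'^\top Q_{22}y'$. What remains is exactly $q(x,y',\lambda_I,s_I)$. This algebraic cancellation is the only delicate point: one has to track the $Q_{12}$ cross term and the sign conventions of $f$ carefully. Note also that no primal-feasibility requirement on $y'$ is needed, which is why $F(x)$ does not contain the equality $A_Ix+B_Iy'=b_I$.

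\textbf{Equality at KKT points.} For $(\bar y,\bar\lambda_I,\bar s_I)\in KKT_I(x)$ we have $\bar y\in Y(x)$, and the inequalities above become equalities: take $y=y'=\bar y$, and use $\bar s_I^\top\bar y=0$ and $A_Ix+B_I\bar y=b_I$. Thus
\[
q(x,\bar y,\bar\lambda_I,\bar s_I)=f(x,\bar y)=\varphi(x),
\]
because $\bar y\in S(x)$ (KKT is sufficient by concavity).

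\textbf{Conclusion.} Weak duality gives $\varphi(x)\le q$ on all of $F(x)$, and this lower bound is attained at every KKT triple. Hence every element of $KKT_I(x)$ lies in $S_D(x)$, which is (a), and $\varphi_D(x)=\varphi(x)$, which is (b).
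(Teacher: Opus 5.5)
Your proof is correct and follows the same route as the paper. Both arguments get $KKT_I(x)\neq\emptyset$ from the nonemptiness assumption and polyhedrality, then use weak Wolfe duality $\varphi(x)\le q(x,\cdot)$ on $F(x)$, with equality at every KKT triple. The only differences are that you derive weak duality yourself from the gradient inequality for the concave function $f(x,\cdot)$ (your cancellation algebra checks out), whereas the paper rewrites $q$ as the Lagrangian restricted to $F(x)$ and cites Wolfe's theorem, and that the paper's chain $\varphi_D(x)\le q=f(x,\bar y)\le\varphi(x)\le\varphi_D(x)$ makes your separate appeal to KKT sufficiency unnecessary.
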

\begin{proof}
By \eqref{ass:NonEmptyness} and the polyhedrality of the inner feasible set $Y(x)$, for each $x\in X$, the set $KKT_I(x)$ is non-empty. Furthermore, observe that with the
Lagrangian 
\begin{align*}
L_I^x&(y,\lambda_I,s_I)= -\textstyle{\frac{1}{2}}y^\top Q_{22}y + c_y^\top y+x^\top Q_{12}y  + \lambda_I^\top(A_Ix+B_Iy -b_I) + s_I^\top y+ \textstyle{\frac{1}{2}}x^\top Q_{11}x + c_x^\top x\\ 
&= y^\top ( - Q_{22}y + c_y + Q_{12}^\top x  + B_I^\top \lambda_I +s_I)
  +\textstyle{\frac{1}{2}}y^\top Q_{22}y+  \lambda_I^\top(A_Ix-b_I)+ \textstyle{\frac{1}{2}}x^\top Q_{11}x + c_x^\top x
  \end{align*}
of \eqref{eq:InnerProblem}, the feasible set of its Wolfe dual problem is defined by
the stationarity condition 
\begin{align*}
0=\nabla_yL_I^x(y,\lambda_I,s_I) = -Q_{22}y +c_y +Q_{12}^\top x +B_{I}^\top\lambda_I +s_I
\end{align*}
and $s_I\geq0$, and thus its objective function reduces to 
\begin{align*}
  L_I^x(y,\lambda_I,s_I) =  \textstyle{\frac{1}{2}}y^\top Q_{22}y+  \lambda_I^\top(A_Ix-b_I)+ \frac{1}{2}x^\top Q_{11}x + c_x^\top x.
\end{align*}  
Therefore the Wolfe dual problem of \eqref{eq:InnerProblem}
can be written as 
\begin{equation}\label{eq:WolfeDual}\tag{\text{$D_I(x)$}}
\underset{y,\lambda_I,s_I}{\min} q(x,y,\lambda_I,s_I) \ \text{ s.t. }\ (y,\lambda_I,s_I)\in F(x),
\end{equation}
and weak Wolfe duality 
\cite{Wolfe1961} yields $\varphi_D(x)\geq\varphi(x)$.
Moreover, for any $(y,\lambda_I,s_I)\in KKT_I(x)$ one has $(y,\lambda_I,s_I)\in F(x)$ and $y\in Y(x)$, which yields
\begin{align*}
\varphi_D(x)\leq q(x,y,\lambda_I,s_I)=L_I^x(y,\lambda_I,s_I)=f(x,y)\leq\varphi(x)\leq\varphi_D(x).
\end{align*}
This shows the strong duality result (b) as well as the second assertion of (a).
\end{proof}
We remark that equality of $KKT_I(x)$ and $S_D(x)$ in 
Lemma~\ref{lem:Wolfe}(a) cannot be expected, since the primal feasibility condition $y\in Y(x)$ is not part of the definition of $F(x)$. Nevertheless, Lemma~\ref{lem:Wolfe}(a) shows that, like the optimal solution mapping $S:X\rightrightarrows\R^m$ of \eqref{eq:InnerProblem}, also the optimal solution mapping $S_D:X\rightrightarrows\mathbb{R}^m\times \mathbb{R}^q\times \mathbb{R}^m$ of \eqref{eq:WolfeDual}
has non-empty images for all $x\in X$, so that with $\varphi$ also $\varphi_D$ is real-valued on $X$.

Hence, problem \eqref{eq:OuterProblem} can be equivalently written as  
\begin{equation}\label{eq:min-D}\tag{\text{$P_D$}}
    \min_x\varphi_D(x) \ \text{ s.t. } x\in X.
\end{equation}
Thanks to this dual representation of problem \eqref{eq:OuterProblem}, we can now establish the following  global equivalence between it and problem  \eqref{eq:QPmodel}.
\begin{proposition}\label{thm:GlobalRelationshipTo(QP)}The following statements hold true:
\begin{itemize}
    \item[(a)] Let $\bar x$ be a global optimal solution of problem \eqref{eq:OuterProblem}. Then, for all $(\bar y, \bar\lambda_I, \bar s_I)\in S_D(\bar x)$, the quadruple $(\bar x, \bar y, \bar\lambda_I, \bar s_I)$ is a global optimal solution of problem \eqref{eq:QPmodel}. 
    \item[(b)] Let $(\bar x, \bar y, \bar\lambda_I, \bar s_I)$ be a global optimal solution of  problem \eqref{eq:QPmodel}. Then the point $\bar x$ is a global optimal solution of problem \eqref{eq:OuterProblem}. 
\end{itemize}
\end{proposition}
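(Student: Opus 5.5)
The plan is to rewrite \eqref{eq:QPmodel} as a two-stage minimization: first over $(y,\lambda_I,s_I)\in F(x)$ for fixed $x$, then over $x\in X$. The feasible set of \eqref{eq:QPmodel} is exactly $\{(x,y,\lambda_I,s_I): x\in X,\ (y,\lambda_I,s_I)\in F(x)\}$, and its objective is $q(x,y,\lambda_I,s_I)$. By definition of $\varphi_D$, every feasible quadruple therefore satisfies
\[
q(x,y,\lambda_I,s_I)\geq \varphi_D(x).
\]
Lemma~\ref{lem:Wolfe}(b) then gives $q(x,y,\lambda_I,s_I)\geq\varphi(x)$. Lemma~\ref{lem:Wolfe}(a) ensures that $S_D(x)\neq\emptyset$ for each $x\in X$, so this infimum over the inner block is attained and equals $\varphi(x)$. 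Hence the optimal value of \eqref{eq:QPmodel} coincides with $\inf_{x\in X}\varphi(x)$.

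For part (a), let $\bar x$ be globally optimal for \eqref{eq:OuterProblem} and take any $(\bar y,\bar\lambda_I,\bar s_I)\in S_D(\bar x)$. This quadruple is feasible for \eqref{eq:QPmodel}, because $\bar x\in X$ and $S_D(\bar x)\subseteq F(\bar x)$. Its objective value is $q(\bar x,\bar y,\bar\lambda_I,\bar s_I)=\varphi_D(\bar x)=\varphi(\bar x)$. For any feasible $(x,y,\lambda_I,s_I)$ we then have the chain
\[
q(x,y,\lambda_I,s_I)\geq\varphi_D(x)=\varphi(x)\geq\varphi(\bar x),
\]
which proves global optimality.

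For part (b), let $(\bar x,\bar y,\bar\lambda_I,\bar s_I)$ be globally optimal for \eqref{eq:QPmodel}, and fix an arbitrary $x\in X$. By Lemma~\ref{lem:Wolfe}(a) we can pick $(y,\lambda_I,s_I)\in S_D(x)$, which is nonempty; this yields a feasible quadruple with value $\varphi_D(x)=\varphi(x)$. Optimality then gives
\[
\varphi(x)=q(x,y,\lambda_I,s_I)\geq q(\bar x,\bar y,\bar\lambda_I,\bar s_I)\geq\varphi_D(\bar x)=\varphi(\bar x),
\]
and since $\bar x\in X$, the point $\bar x$ is globally optimal for \eqref{eq:OuterProblem}.

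There is no real obstacle here. The only delicate point is that the inner minimum over $F(x)$ must be attained for every $x\in X$, and not merely be an infimum. This attainment is exactly what Lemma~\ref{lem:Wolfe}(a) provides, via assumption \eqref{ass:NonEmptyness}, together with the strong duality statement $\varphi=\varphi_D$ from part (b) of that lemma.
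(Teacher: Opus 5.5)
Your proof is correct and follows essentially the same route as the paper. Both arguments treat \eqref{eq:QPmodel} as the two-stage problem $\min_{x\in X}\min_{(y,\lambda_I,s_I)\in F(x)} q$ and transfer optimality through $\varphi=\varphi_D$ and the non-emptiness of $S_D(x)$ from Lemma~\ref{lem:Wolfe}. The one small difference is in (b): the paper first shows by contradiction that $(\bar y,\bar\lambda_I,\bar s_I)\in S_D(\bar x)$, whereas you rightly note that the trivial bound $q(\bar x,\bar y,\bar\lambda_I,\bar s_I)\geq\varphi_D(\bar x)$ already suffices.
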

\begin{proof}
For (a), note that $\bar x$ is also a global optimal solution of problem \eqref{eq:min-D}. Hence, let $(\bar y, \bar\lambda_I, \bar s_I) \in S_D(\bar x)$, then it follows that for any $x\in X$ and $(y, \lambda_I, s_I)\in F(x)$, we have 
\[
q(\bar x, \bar y, \bar\lambda_I, \bar s_I)=\varphi_D(\bar x) \leq \varphi_D(x)\leq q(x, y, \lambda_I, s_I). 
\]
Hence, $(\bar x, \bar y, \bar \lambda_I, \bar s_I)$ is a global optimal solution of problem \eqref{eq:QPmodel}. 

As for (b), considering the hypothesis, it automatically holds that $(\bar y, \bar{\lambda}_I, \bar{s}_I)\in S_D(\bar x)$.  
Otherwise, we can find $(\tilde{y}, \tilde{\lambda}_I, \tilde{s}_I)\in F(\bar x)$ such that 
\[
q(\bar x, \bar y, \bar{\lambda}_I, \bar{s}_I) > q(\bar x, \tilde{y}, \tilde{\lambda}_I, \tilde{s}_I),
\]
and since $(\bar x, \tilde{y}, \tilde{\lambda}_I, \tilde{s}_I)$ is a feasible point to problem \eqref{eq:QPmodel}, we have a contradiction.
Therefore, for any $x\in X$ and $(y, \lambda_I, s_I)\in F(x)$,
\[
\varphi_D(\bar x)=q(\bar x, \bar y, \bar\lambda_I, \bar s_I) \leq q(x, y, \lambda_I, s_I).
\]
  {By Lemma~\ref{lem:Wolfe}}, we have $S_D(x) \neq \emptyset$, and hence, it follows in particular that 
\[
\varphi_D(\bar x)=q(\bar x, \bar y, \bar\lambda_I, \bar s_I) \leq \underset{(y, \lambda_I, s_I)\in F(x)}{\min} q(x, y, \lambda_I, s_I) =\varphi_D(x).
\]
Therefore, $\bar x$ is a global optimal solution of problem \eqref{eq:min-D} and hence for problem \eqref{eq:OuterProblem}.
\end{proof}

For{one direction of} the local relationship, we need the inner semicontinuity of the set-valued mapping $S_D$, which will be said to hold at a point   $(\bar x, \bar y, \bar\lambda_I, \bar s_I)$ if for any sequence $(x^k)$ converging to $\bar x$, there is a sequence $(y^k_I,  \lambda^k_I, s^k_I)\in S_D(x^k)$ that converges to $(\bar y, \bar\lambda_I, \bar s_I)$ as $k \rightarrow \infty$.{We will state a sufficient condition for inner semicontinuity of $S_D$ in Remark~\ref{rem:isc_suff_cond}.}
\begin{proposition}\label{thm:LocalRelationshipTo(QP)}The following statements hold true:
\begin{itemize}
    \item[(a)] Let $\bar x$ be a local optimal solution of \eqref{eq:OuterProblem}. Then, for all $(\bar y, \bar\lambda_I, \bar s_I)\in S_D(\bar x)$, the quadruple $(\bar x, \bar y, \bar\lambda_I, \bar s_I)$ is a local optimal solution of problem \eqref{eq:QPmodel}. 
    \item[(b)] Let the point  $(\bar x, \bar y, \bar\lambda_I, \bar s_I)$, where $S_D$ is inner semicontinuous, be a local optimal solution of  \eqref{eq:QPmodel}. Then $\bar x$ is a local optimal solution of problem \eqref{eq:OuterProblem}. 
\end{itemize}
\end{proposition}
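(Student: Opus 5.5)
Both parts follow the pattern of the proof of Proposition~\ref{thm:GlobalRelationshipTo(QP)}, now restricted to neighborhoods. Throughout we use Lemma~\ref{lem:Wolfe}: $\varphi=\varphi_D$ on $X$, and $S_D(x)\neq\emptyset$ for every $x\in X$. In particular, local optimality of $\bar x$ for \eqref{eq:OuterProblem} is the same as local optimality for \eqref{eq:min-D}.

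For (a), let $U$ be a neighborhood of $\bar x$ with $\varphi_D(\bar x)\le\varphi_D(x)$ for all $x\in U\cap X$. Fix $(\bar y,\bar\lambda_I,\bar s_I)\in S_D(\bar x)$ and take the product neighborhood $U\times\mathbb{R}^m\times\mathbb{R}^q\times\mathbb{R}^m$ of $(\bar x,\bar y,\bar\lambda_I,\bar s_I)$. Any feasible point $(x,y,\lambda_I,s_I)$ of \eqref{eq:QPmodel} in this set satisfies $x\in U\cap X$ and $(y,\lambda_I,s_I)\in F(x)$. Hence
\[
q(\bar x,\bar y,\bar\lambda_I,\bar s_I)=\varphi_D(\bar x)\le\varphi_D(x)\le q(x,y,\lambda_I,s_I),
\]
so the quadruple is a local optimal solution of \eqref{eq:QPmodel}. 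The inner variables are unrestricted here, so this is in fact stronger than local optimality.

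For (b), let $W$ be a neighborhood of $(\bar x,\bar y,\bar\lambda_I,\bar s_I)$ on which $q$ attains its minimum over the feasible set of \eqref{eq:QPmodel} at this point. Argue by contradiction: if $\bar x$ were not locally optimal for \eqref{eq:min-D}, there would be a sequence $x^k\in X$ with $x^k\to\bar x$ and $\varphi_D(x^k)<\varphi_D(\bar x)$.

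\begin{itemize}
\item[(i)] \textbf{Show $(\bar y,\bar\lambda_I,\bar s_I)\in S_D(\bar x)$, i.e.\ $q(\bar x,\bar y,\bar\lambda_I,\bar s_I)=\varphi_D(\bar x)$.} Since $F(\bar x)$ is convex and $q(\bar x,\cdot)$ is convex in $(y,\lambda_I,s_I)$ (because $Q_{22}\succeq0$ and $q$ is linear in $\lambda_I$), a local minimizer of $q(\bar x,\cdot)$ over $F(\bar x)$ is a global one. The point is such a local minimizer because $(\bar x,y,\lambda_I,s_I)$ is feasible for \eqref{eq:QPmodel} whenever $(y,\lambda_I,s_I)\in F(\bar x)$, and such points near $(\bar y,\bar\lambda_I,\bar s_I)$ lie in $W$.
\item[(ii)] \textbf{Use inner semicontinuity of $S_D$ at the point.} It gives $(y^k,\lambda_I^k,s_I^k)\in S_D(x^k)$ converging to $(\bar y,\bar\lambda_I,\bar s_I)$. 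For large $k$ the feasible point $(x^k,y^k,\lambda_I^k,s_I^k)$ of \eqref{eq:QPmodel} lies in $W$, so
\[
\varphi_D(x^k)=q(x^k,y^k,\lambda_I^k,s_I^k)\ge q(\bar x,\bar y,\bar\lambda_I,\bar s_I)=\varphi_D(\bar x).
\]
This contradicts $\varphi_D(x^k)<\varphi_D(\bar x)$, and Lemma~\ref{lem:Wolfe}(b) then transfers local optimality from \eqref{eq:min-D} to \eqref{eq:OuterProblem}.
\end{itemize}

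The main obstacle is step (i): a local solution of \eqref{eq:QPmodel} need not a priori select an element of $S_D(\bar x)$, and the convexity argument is what closes this gap. Some care is needed to write the neighborhood $W$ as a product of a neighborhood in $x$ and a neighborhood in the inner variables.
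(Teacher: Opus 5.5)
Your proof is correct and follows the paper's argument. In (a) you use the same chain $q(\bar x,\bar y,\bar\lambda_I,\bar s_I)=\varphi_D(\bar x)\le\varphi_D(x)\le q(x,y,\lambda_I,s_I)$, stated directly rather than by contradiction, and in (b) you use the same contradiction via inner semicontinuity of $S_D$. Your step (i) also supplies a justification (convexity of $q(\bar x,\cdot)$ on $F(\bar x)$) for $q(\bar x,\bar y,\bar\lambda_I,\bar s_I)=\varphi_D(\bar x)$, which the paper only asserts ``by construction''; it can alternatively be read off from inner semicontinuity applied to the constant sequence $x^k=\bar x$ together with closedness of $S_D(\bar x)$.
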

\begin{proof}For (a), $\bar x$ also being a local optimal solution of problem \eqref{eq:min-D}, we claim that for any $(\bar y,\bar\lambda_I, \bar s_I)\in S_D(\bar x)$, the point $(\bar x, \bar y,\bar\lambda_I, \bar s_I)$ is automatically a local optimal solution of problem \eqref{eq:QPmodel}. 
Otherwise, consider a point $(\bar y,\bar\lambda_I, \bar s_I)\in S_D(\bar x)$ such that  $(\bar x, \bar y,\bar\lambda_I, \bar s_I)$ is not a local optimal solution of problem \eqref{eq:QPmodel}. Then we can find a sequence $(x^k, y^k, \lambda^k_I, s^k_I)$ from the graph of $F$ on $X$ with $x^k \rightarrow \bar x$, $y^k \rightarrow \bar y$, $\lambda^k_I \rightarrow \bar\lambda_I$, and $s^k_I \rightarrow \bar s_I$ such that we have 
\[
q(x^k, y^k, \lambda^k_I, s^k_I) < q(\bar x, \bar y,\bar\lambda_I, \bar s_I) = \varphi_D(\bar x) \;\mbox{ for all }\, k.
\]
Then considering the definition of $\varphi_D$, it follows that 
\[
\varphi_D(x^k) \leq q(x^k, y^k, \lambda^k_I, s^k_I) < q(\bar x, \bar y,\bar\lambda_I, \bar s_I) = \varphi_D(\bar x) \;\mbox{ for all }\, k.
\]
This clearly contradicts the fact $\bar x$ is a local optimal solution of problem \eqref{eq:min-D} given that $x^k\in X$ for all $k$. 
Now, given that $S(\bar x) \neq \emptyset$, it follows that $S_D(\bar x)\neq \emptyset$ by the strong Wolfe duality result. Thus, there exists some  $(\bar y,\lambda_I,  s_I)\in S_D(\bar x)$ such that  $(\bar x, \bar y,\lambda_I, s_I)$ is locally optimal for  \eqref{eq:QPmodel}.

As for (b), if $\bar x$ is not a local optimal solution of problem \eqref{eq:min-D}, then we can find a feasible sequence $x^k\rightarrow \bar x$ such that $\varphi_D(\bar x) > \varphi_D(x^k)$ for all $k$. As the set-valued mapping $S_D$ is inner semicontinuous at $(\bar x, \bar y, \bar\lambda_I, \bar s_I)$, we can find a sequence $\left(y^k, \lambda^k_I, s^k_I\right)\in S_D(x^k)$ that converges to $(\bar y, \bar\lambda_I, \bar s_I)$. It follows by the construction that 
\[
q(\bar x, \bar y, \bar\lambda_I, \bar s_I) = \varphi_D(\bar x) > \varphi_D(x^k) = q\left(x^k, y^k, \lambda^k_I, s^k_I\right)
\]
with $x^k\in X$, $\left(y^k, \lambda^k_I, s^k_I\right)\in F(x^k)$ for all $k$,{which contradicts the assumed local minimality of the point $(\bar x, \bar y, \bar\lambda_I, \bar s_I)$ for problem \eqref{eq:QPmodel}}. 
\end{proof}
\begin{remark}\label{rem:AI=0}  In the special case $A_I =0$, when no coupling constraints in the inner problem \eqref{eq:InnerProblem} exist and the outer and inner problems are only coupled via the matrix $Q_{12}$, the optimal value function $\varphi$ \eqref{VarPhi} is convex as the pointwise maximum of convex functions.  Therefore, \eqref{eq:OuterProblem} and  \eqref{eq:min-D} are convex, possibly non\-smooth, optimization problems. Similarly, in this case, also \eqref{eq:QPmodel} is a convex problem. Therefore, for all these problems local minimal points are necessarily global minimal points. Hence, in the case $A_I =0$, by{Proposition} \ref{thm:GlobalRelationshipTo(QP)} we can state that these problems are equivalent without distinguishing global and local minimal points. 
In particular, in this case, the inner semicontinuity of $S_D$ required in{Proposition} \ref{thm:LocalRelationshipTo(QP)} is not necessary. 
\end{remark}

Finally, thanks to Proposition \ref{thm:LocalRelationshipTo(QP)}(a), we can derive the main result of this subsection. 
\begin{corollary}\label{theo:NecessaryOptCond-withoutCQ}
If $\bar x$ is locally optimal for problem \eqref{eq:OuterProblem}, then it is  stationary. 
\end{corollary}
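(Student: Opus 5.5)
We plan to reduce the claim to the KKT conditions of the single-level problem \eqref{eq:QPmodel}. Let $\bar x$ be locally optimal for \eqref{eq:OuterProblem}. By Lemma~\ref{lem:Wolfe}(a), $KKT_I(\bar x)\neq\emptyset$ and $KKT_I(\bar x)\subseteq S_D(\bar x)$. We fix some $(\bar y,\bar\lambda_I,\bar s_I)\in KKT_I(\bar x)$. Then Proposition~\ref{thm:LocalRelationshipTo(QP)}(a) shows that $(\bar x,\bar y,\bar\lambda_I,\bar s_I)$ is a local minimizer of \eqref{eq:QPmodel}. This problem has a quadratic objective and affine constraints only, so the Abadie CQ holds automatically. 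Hence there exist multipliers $\lambda_O$ for $A_Ox=b_O$, $s_O\ge0$ for $x\ge0$, $\mu\in\R^m$ for the affine equality $-Q_{22}y+c_y+Q_{12}^\top x+B_I^\top\lambda_I+s_I=0$, and $\nu\ge0$ for $s_I\ge0$, with complementarity $x\circ s_O=0$ and $s_I\circ\nu=0$.

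Next we write out the stationarity of the Lagrangian of \eqref{eq:QPmodel}. With the sign convention $\mathcal L=q-\lambda_O^\top(A_Ox-b_O)-s_O^\top x-\mu^\top(\cdots)-\nu^\top s_I$, or a suitable sign choice for $\lambda_O$, we obtain:
\begin{align*}
x:&\quad Q_{11}\bar x+c_x+A_I^\top\bar\lambda_I-A_O^\top\lambda_O-s_O-Q_{12}\mu=0,\\
y:&\quad Q_{22}\bar y+Q_{22}\mu=0,\\
\lambda_I:&\quad A_I\bar x-b_I-B_I\mu=0,\\
s_I:&\quad -\mu-\nu=0.
\end{align*}
The key observation is that $\mu=-\nu$. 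Setting $w:=\nu\ge0$, we get $Q_{22}w=Q_{22}\bar y$, $B_Iw=b_I-A_I\bar x$, and $w\circ\bar s_I=0$, so that $Q_{12}w$ appears in the $x$-equation.

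The main obstacle is that the outer KKT system in \eqref{eq:KKTjoint} contains $Q_{12}\bar y$, not $Q_{12}w$, and $w$ need not equal $\bar y$. We will resolve this by showing that $w$ is itself an inner solution compatible with $(\bar\lambda_I,\bar s_I)$. Indeed, $w\ge0$ and $A_I\bar x+B_Iw=b_I$, so $w\in Y(\bar x)$. Moreover $Q_{22}w=Q_{22}\bar y$, so $-Q_{22}w+Q_{12}^\top\bar x+c_y+B_I^\top\bar\lambda_I+\bar s_I=0$ still holds, and $w\circ\bar s_I=0$. Therefore $(w,\bar\lambda_I,\bar s_I)\in KKT_I(\bar x)$ and $w\in S(\bar x)$. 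Replacing $\bar y$ by $w$, the $x$-equation reads $Q_{11}\bar x+Q_{12}w+c_x+A_I^\top\bar\lambda_I+A_O^\top(-\lambda_O)-s_O=0$. After renaming $\bar\lambda_O:=-\lambda_O$ (this sign depends on the convention) and $\bar s_O:=s_O$, and together with $A_O\bar x=b_O$, $\bar x\circ\bar s_O=0$ and $\bar x,\bar s_O\ge0$, the vector $(\bar x,w,\bar\lambda_O,\bar s_O,\bar\lambda_I,\bar s_I)$ solves \eqref{eq:KKTjoint}. Hence $\bar x$ is stationary in the sense of Definition~\ref{Def:StationarityConcept}.

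The only delicate points are getting the signs right in the Lagrangian and checking that the substitution $w$ for $\bar y$ preserves both the inner stationarity equation (via $Q_{22}w=Q_{22}\bar y$) and complementarity (via $\nu\circ\bar s_I=0$). Both follow directly from the KKT system of \eqref{eq:QPmodel}.
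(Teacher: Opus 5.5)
Your proposal is correct and takes essentially the same route as the paper. Both arguments use Proposition~\ref{thm:LocalRelationshipTo(QP)}(a) to get local optimality of the lifted point for \eqref{eq:QPmodel}, write the KKT system of this affinely constrained problem, and take the multiplier of the Wolfe-dual equality constraint (your $w=\nu=-\mu$, the paper's $\gamma=\tau$) as a new element of $S(\bar x)$ that replaces $\bar y$. The only difference is cosmetic: you start from a point of $KKT_I(\bar x)$ rather than an arbitrary element of $S_D(\bar x)$, which is harmless because you still, correctly, have to switch to $w$.
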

\begin{proof} Since $S_D(\bar x)\neq \emptyset$, it follows from Proposition \ref{thm:LocalRelationshipTo(QP)}(a) that there exists $(\bar y, \bar\lambda_I, \bar s_I)\in S_D(\bar x)$ such that $(\bar x, \bar y, \bar\lambda_I, \bar s_I)$ is a locally optimal for \eqref{eq:QPmodel}.{Since the feasible set of the latter problem is defined only by affine linear constraints, with its Lagrangian
\begin{align*}
    \mathcal{L}&(x,y,\lambda_I,s_I,\lambda_O,s_O,\gamma,\tau):=\frac{1}{2}y^\top Q_{22}y+  \lambda_I^\top (A_Ix-b_I)  +   \frac{1}{2}x^\top Q_{11}x+c_x^\top x\\
    &\qquad \quad +\lambda_O^\top(A_Ox-b_O)-s_O^\top x+\gamma^\top(-Q_{22}y + c_y +Q_{12}^\top x + B_I^\top\lambda_I +s_I)-\tau^\top s_I
\end{align*}
we can find a multiplier vector $(\lambda_O, s_O, \gamma, \tau)$ such that the KKT conditions
\begin{align*}
    \nabla_{(x,y,\lambda_I,s_I)} \mathcal{L}(\bar x,\bar y, \lambda_I,  s_I, \lambda_O,  s_O,\gamma,\tau)&=0,\\
    A_O\bar x-b_O&=0, \\ 
    \bar x\circ s_O & =0,\\
    \bar x, \, s_O & \geq 0,\\
    -Q_{22}\bar y + c_y +Q_{12}^\top\bar x + B_I^\top\lambda_I +s_I&=0,\\ 
    \tau\circ s_I &=0,\\
    \tau, \, s_I&\geq0
\end{align*} 
are satisfied.
The equations corresponding to $\nabla_{y} \mathcal{L}$ and $\nabla_{s_I} \mathcal{L}$ in the first line of the system particularly yield $Q_{22}\bar y=Q_{22}\gamma$ and $\tau=\gamma$, respectively. 
Using these identities to write $\tau$ and $Q_{22}\bar y$ in terms of $\gamma$, and calculating $\nabla_{x} \mathcal{L}$ and $\nabla_{\lambda_I} \mathcal{L}$,
leads to an equivalent system
which coincides with \eqref{eq:KKTjoint}, where 
$\gamma$ solves the inner KKT system 
with multipliers $\lambda_I$ and $s_I$. This implies $\gamma\in S(\bar x)$ and concludes the proof.}
\end{proof}

\begin{remark}
    We point out that in the above proof of Corollary~\ref{theo:NecessaryOptCond-withoutCQ} the firstly chosen point $\bar y$ is not the one which necessarily leads to the stationarity condition, but the possibly different point $\gamma\in S(\bar x)$.{In particular, the primal feasibility conditions $A_I\bar x+B_Iy=b_I$, $y\geq0$ do not necessarily hold for $\bar y$, but they do hold for $\gamma$.} Also note that $\gamma$ is a dual variable corresponding to an equation from the Wolfe dual of the inner problem and, thus, a ``bi-dual variable''. Therefore it makes sense that it lies in the minimal point set $S(\bar x)$ of the primal inner problem.
\end{remark}

To conclude this subsection, note that the stationarity system in \eqref{eq:KKTjoint} can be derived by multiple other ways. 
For instance, if the optimal solution set-valued mapping $S$ reduces to a single-valued and continuously differentiable function, the stationarity conditions in Definition \ref{Def:StationarityConcept} could also be obtained from the Lagrange multiplier rule of the minimization of the function $f(x, y(x))$ subject to $x\in X$, under the non-degeneracy of the outer and inner points of interest $\bar x$ and $\bar y\in S(\bar x)$, respectively,{cf. Subsection~\ref{sec:meaningfulness}}.



\subsection{On the meaningfulness of stationarity.}\label{sec:meaningfulness}
As the algorithm that we introduce in this paper aims to compute stationary points in the sense of Definition \ref{Def:StationarityConcept}, an interesting question is to know whether such points are actually meaningful for problem \eqref{eq:MinMax}. The \textit{meaningfulness} here is in terms of whether such points are connected to a solution concept associated to problem \eqref{eq:MinMax}. Indeed, this is the question that we address in this subsection. More precisely, we construct here a framework that ensures that a stationary point, in the sense of Definition \ref{Def:StationarityConcept}, is a local optimal solution of problem \eqref{eq:MinMax}, in the sense of Definition \ref{def:localOptimalSolution}. 

  {This framework uses inner and outer non-degeneracy conditions.} Let
\[
{\cal I}_{\bar x} :=\{i=1,\ldots,n:\,\bar x_i=0\} \mbox{ and } {\cal I}_{\bar y}:=\{j=1,\ldots,m:\,\bar y_j=0\}
\]
 denote the active index sets of the inequality constraints $x\geq0$ at $\bar x$ and of $y\geq0$ at $\bar y$, respectively. 
Furthermore, $\mathds{1}_{\bar x}$ and $\mathds{1}_{\bar y}$ stand for the matrices with row vectors $(e^i_n)^\top$, $i\in {\cal I}_{\bar x}$, and $(e^k_m)^\top$, $j\in {\cal I}_{\bar y}$, respectively.  We say that the outer linear independence constraint qualification (OLICQ) holds at the point  $\bar x\in X$ if the matrix 
\begin{equation}\label{OLICQ}\tag{OLICQ}
    [A_O^\top,\1_{\bar x}^\top] \;\, \mbox{ possesses full column rank},
\end{equation}
and similarly, the inner linear independence constraint qualification (ILICQ) holds at $\bar y\in Y(\bar x)$ if the matrix
\begin{equation}\label{ILICQ}\tag{ILICQ}
    [B_I^\top,\1^\top_{\bar y}] \;\, \mbox{ possesses full column rank}.
\end{equation}
Given a stationary point $\bar x$ of \eqref{eq:MinMax} with some $\bar y\in S(\bar x)$, the \eqref{ILICQ} implies that the corresponding inner multipliers $\bar\lambda_I$, $\bar s_I$ are uniquely determined, and therefore, under the \eqref{ILICQ}, the \eqref{OLICQ} implies the same for the outer multipliers $\bar\lambda_O$, $\bar s_O$.

%
%

Under the \eqref{ILICQ}, inner strict complementary slackness (ISCS) is said to hold in the case 
\begin{equation}\label{ISCS}\tag{ISCS}
    \min_{i=1,\ldots,m}([\bar s_I]_i+[\bar y]_i)>0,
\end{equation}
and under the \eqref{ILICQ} and \eqref{OLICQ}, outer strict complementary slackness (OSCS) if
\begin{equation}\label{OSCS}\tag{OSCS}
    \min_{i=1,\ldots,n}([\bar s_O]_i+[\bar x]_i)>0.
\end{equation}
Finally, we state inner and outer second order sufficiency conditions. To this end, for $\bar x\in X$, let
\begin{align}\label{eq:defTI}
    T(\bar y,Y(\bar x)):=\kernel\begin{bmatrix}
        B_I\\\1_{\bar y}\end{bmatrix}
\end{align}
denote the tangent space to $Y(\bar x)$ at $\bar y$. Then the inner second order sufficiency condition (ISOSC) is said to hold if
\begin{equation}\label{ISOSC}\tag{ISOSC}
 \eta^\top (-Q_{22})\eta <0 \;\, \mbox{ for all } \eta\in T(\bar y,Y(\bar x))\setminus\{0 \}.
\end{equation}
Analogously, let 
\begin{align}\label{eq:defTO}
    T(\bar x,X)=\kernel\begin{bmatrix}
        A_O\\\1_{\bar x}\end{bmatrix}
\end{align}
denote the tangent space to $X$ at $\bar x$. Under \eqref{ILICQ}, \eqref{ISCS} and \eqref{ISOSC} the outer second order sufficiency condition (OSOSC) is said to hold if
\begin{equation}\label{OSOSC}\tag{OSOSC}
 d^\top Q_{11}'d >0 \;\, \mbox{ for all } d\in T(\bar x,X)\setminus\{0\},
\end{equation}
where
\begin{align}\label{eq:D2phi}
Q_{11}':=Q_{11}-\begin{bmatrix}
    Q_{12} & A_I^\top & 0
\end{bmatrix}\begin{bmatrix}
        -Q_{22} & B_I^\top &\1_{\bar y}^\top\\
        B_I & 0 & 0\\
       \1_{\bar y} & 0 & 0
   \end{bmatrix}^{-1}\begin{bmatrix}
    Q_{12}^\top\\A_I\\0
    \end{bmatrix}.
\end{align}
The inverted matrix in \eqref{eq:D2phi} is indeed non-singular, which can be shown with the techniques from \cite{fiacco1990nonlinear,jongen1986critical}. For completeness, and since these results also play a crucial role in Subsection~\ref{sec:ndstat}, we briefly recall the corresponding arguments. More details are given in Subsection~\ref{sec:ndstat}.

Let us abbreviate the system of equations in the inner KKT system in \eqref{eq:KKTjoint} by
\begin{align}\label{eq:defPhiI}
0=\Phi_I(x,y,\lambda_I,s_I):=\begin{bmatrix}
        -Q_{22}y+Q_{12}^\top x+c_y+B_I^\top\lambda_I+s_I\\
        A_Ix+B_Iy-b_I\\
        s_I\circ y
   \end{bmatrix}.
\end{align}
The Jacobian with respect to the inner variables and multipliers $(y,\lambda_I,s_I)$ of $\Phi_I$ at a stationary point $(\bar x,\bar y,\bar\lambda_I,\bar s_I)$ is
 \begin{align}\label{eq:DPhiI}   D_{(y,\lambda_I,s_I)}\Phi_I(\bar x,\bar y,\bar\lambda_I,\bar s_I)=\begin{bmatrix}
        -Q_{22} & B_I^\top & \1\\
        B_I & 0 & 0\\
        \diag(\bar s_I)  & 0 & \diag(\bar y)
    \end{bmatrix},
 \end{align}
 where, e.g., $\diag(\bar s_I)$ stands for the diagonal matrix with diagonal entries from $\bar s_I$ and results as the partial Jacobian of the Hadamard product $s_I\circ y$ with respect to $y$. Moreover, here and in the following $\1$ stands for the identity matrix of appropriate dimension.
$D_{(y,\lambda_I,s_I)}\Phi_I(\bar x,\bar y,\bar\lambda_I,\bar s_I)$ is non-singular iff \eqref{ISCS} holds and the matrix
\begin{align}\label{eq:M}
    \begin{bmatrix}
        -Q_{22} & B_I^\top &\1_{\bar y}^\top\\
        B_I & 0 & 0\\
       \1_{\bar y} & 0 & 0
    \end{bmatrix} 
\end{align}
is non-singular. The latter is the case under \eqref{ILICQ} and \eqref{ISOSC} (cf. Lemma~\ref{lem:inertia}).

Under \eqref{ILICQ}, \eqref{ISCS} and \eqref{ISOSC}, this result allows us to apply the Implicit Function Theorem to the system $\Phi_I(x,y,\lambda_I,s_I)=0$ at $(\bar x,\bar y,\bar\lambda_I,\bar s_I)$, which
yields the existence of a unique smooth function $(y(x),\lambda_I(x),s_I(x))$ with $(y(\bar x),\lambda_I(\bar x),s_I(\bar x))=(\bar y,\bar\lambda_I,\bar s_I)$ and $\Phi_I(x,y(x),\lambda_I(x),s_I(x))\equiv 0$ for all $x$ from some neighborhood $U$ of $\bar x$. The \eqref{ISCS} and continuity arguments also imply $y(x)\geq0$ and $s_I(x)\geq0$ for sufficiently small $U$, so that  $y(x)\in S(x)$ holds for all $x\in U$. This yields the local description $\varphi(x)=f(x,y(x))$ of the inner maximal value function for all $x\in U$. In particular, the function $\varphi$ is smooth around $\bar x$ and, therefore, local optimality conditions for \eqref{eq:OuterProblem} can be obtained from conditions for smooth problems (cf. also \cite{hettich2005semi,wetterling1970definitheitsbedingungen}). To formulate them, some computations based on the Implicit Function Theorem yield $\nabla\varphi(\bar x)=Q_{11}\bar x+Q_{12}\bar y+c_x+A_I^\top \bar\lambda_I$ and $D^2\varphi(\bar x)=Q_{11}'$ with the matrix from \eqref{eq:D2phi}.

\begin{theorem}\label{the:ndmeaningfulness}
Let $\bar x$ be a stationary point in the sense of Definition \ref{Def:StationarityConcept} such that the corresponding pair $(\bar x, \bar y)$ with $\bar y\in S(\bar x)$ satisfies  \eqref{ILICQ}, \eqref{ISCS}, \eqref{ISOSC}, \eqref{OLICQ}, \eqref{OSCS} and  \eqref{OSOSC}. 
 Then $\bar x$ is a strict local minimal point for problem \eqref{eq:OuterProblem}. 
\end{theorem}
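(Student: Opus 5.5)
The plan is to reduce the claim to the classical second-order sufficiency theorem for a smooth problem with linear constraints, applied to the reduced outer problem \eqref{eq:OuterProblem}. The reduction rests on the Implicit Function Theorem discussion preceding the theorem.

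\textbf{Step 1 (local smooth reduction).} Under \eqref{ILICQ}, \eqref{ISCS} and \eqref{ISOSC}, the matrix \eqref{eq:M} is non-singular by Lemma~\ref{lem:inertia}, and hence so is the Jacobian \eqref{eq:DPhiI}. The Implicit Function Theorem then gives a neighborhood $U$ of $\bar x$ and a smooth map $x\mapsto (y(x),\lambda_I(x),s_I(x))$ through $(\bar y,\bar\lambda_I,\bar s_I)$ with $\Phi_I\equiv0$. By \eqref{ISCS} and continuity, $y(x)\ge0$ and $s_I(x)\ge0$ on a possibly smaller $U$. Since the inner problem is concave with polyhedral constraints, the KKT conditions are sufficient, so $y(x)\in S(x)$ and $\varphi(x)=f(x,y(x))$ on $U$. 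Thus $\varphi$ is $C^2$ on $U$.

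\textbf{Step 2 (first order).} Differentiate $f(x,y(x))$ and use the inner KKT equations. The terms involving $Dy(x)$ collapse via $\nabla_y f=-B_I^\top\lambda_I-s_I$, $B_IDy=-A_I$, and $s_I\circ y\equiv0$. By \eqref{ISCS}, the last identity forces $[Dy]_j=0$ on ${\cal I}_{\bar y}$ and $[s_I]_j=0$ off it, so $s_I^\top Dy=0$. This gives $\nabla\varphi(\bar x)=Q_{11}\bar x+Q_{12}\bar y+c_x+A_I^\top\bar\lambda_I$. The outer KKT system in \eqref{eq:KKTjoint} then says exactly that $\bar x$ is a KKT point of $\min\varphi(x)$ subject to $x\in X$, with multipliers $(\bar\lambda_O,\bar s_O)$.

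\textbf{Step 3 (second order).} Differentiate once more. Differentiating the identity $\Phi_I\equiv0$ with respect to $x$ and restricting to the rows where \eqref{ISCS} yields $Dy_j=0$ for $j\in{\cal I}_{\bar y}$, the derivative $(Dy,D\lambda_I,Ds_{I,{\cal I}_{\bar y}})$ solves the linear system with matrix \eqref{eq:M} and right-hand side $-[Q_{12}^\top;A_I;0]$. Substituting into $D^2\varphi=Q_{11}+Q_{12}Dy+A_I^\top D\lambda_I$ gives $D^2\varphi(\bar x)=Q_{11}'$ from \eqref{eq:D2phi}. This bookkeeping is the only technical step. The paper already states the result, so I would just verify the block solve.

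\textbf{Step 4 (conclude).} Apply the standard sufficiency theorem for linearly constrained smooth problems. By \eqref{OSCS}, the critical cone of \eqref{eq:OuterProblem} at $\bar x$ equals $\{d: A_Od=0,\ d_i=0\ (i\in{\cal I}_{\bar x})\}=T(\bar x,X)$. The Lagrangian Hessian equals $D^2\varphi(\bar x)=Q_{11}'$ because the constraints are linear. So \eqref{OSOSC} is precisely the second-order sufficient condition, and the theorem yields strict local minimality on $U\cap X$.

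\textbf{Main obstacle.} The only delicate point is that the sufficiency theorem needs $\varphi$ to be $C^2$ on a full neighborhood intersected with $X$, not merely at $\bar x$. Step 1 provides this. \eqref{OLICQ} is needed only for uniqueness of the multipliers and not for sufficiency. If one prefers a direct argument, expand $\varphi(x)-\varphi(\bar x)$ by Taylor's theorem for feasible $x=\bar x+td$ with $\|d\|=1$. If $d$ has some $d_i>0$ with $[\bar s_O]_i>0$, the linear term $t\,\bar s_O^\top d>0$ dominates for small $t$. Otherwise $d\in T(\bar x,X)$, and \eqref{OSOSC} together with compactness gives the quadratic growth. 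Both cases are handled uniformly by a compactness and contradiction argument on the unit sphere.
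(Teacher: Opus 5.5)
Your proposal is correct and follows the same route as the paper. You use the Implicit Function Theorem under \eqref{ILICQ}, \eqref{ISCS}, \eqref{ISOSC} to localize to $\varphi(x)=f(x,y(x))$, identify $\nabla\varphi(\bar x)$ and $D^2\varphi(\bar x)=Q_{11}'$, and apply the standard second-order sufficiency theorem, with \eqref{OSCS} making the critical cone equal to $T(\bar x,X)$; your explicit check of the gradient and Hessian formulas, which the paper only asserts, is sound, and so is your remark that \eqref{OLICQ} is not needed for sufficiency under linear constraints.
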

\begin{proof}
As seen above, under \eqref{ILICQ}, \eqref{ISCS}, \eqref{ISOSC}, with the implicitly defined function $y(x)$ the problem \eqref{eq:OuterProblem} is locally equivalent to the minimization of $\varphi(x)=f(x,y(x))$ over $X$. In particular, the strict local optimal solutions of both problems coincide.

A standard second order sufficiency condition for $\bar x\in X$ to be a strict local optimal solution of $\varphi(x)=f(x,y(x))$ on $X$ consists of the KKT condition
\begin{align*}
\nabla\varphi(\bar x)+A_O^\top\bar\lambda_O-\bar s_O&=0,\\
A_O\bar x-b_O&=0,\\
\bar x\circ \bar s_O&=0,\\
\bar x,\bar s_O&\geq0
\end{align*}
with multipliers $\bar\lambda_O$, $\bar s_O$,
the \eqref{OLICQ}, \eqref{OSCS} as well as the second order condition
\begin{align*}
     d^\top D^2\varphi(\bar x) d >0 \;\, \mbox{ for all } d\in T(\bar x,X)\setminus\{0\}.
\end{align*}
In view of the above formulas for $\nabla\varphi(\bar x)$ and $D^2\varphi(\bar x)$ this means that the triple $(\bar x,\bar\lambda_O,\bar s_O)$ must satisfy the outer KKT system in \eqref{eq:KKTjoint}, \eqref{OLICQ}, \eqref{OSCS} and \eqref{OSOSC}. Since all these conditions are covered by the assumptions, $\bar x$ is a strict local optimal solution of $\varphi(x)=f(x,y(x))$ on $X$, which shows the assertion.
\end{proof}

\begin{remark}
The simultaneous satisfaction of the inner and outer non-degeneracy assumptions \eqref{ILICQ}, \eqref{ISCS}, \eqref{ISOSC}, \eqref{OLICQ}, \eqref{OSCS} and  \eqref{OSOSC} is a mild requirement in a topological sense. In fact, the epigraph reformulation \cite{stein2025tutorial} of \eqref{eq:MinMax} yields a generalized semi-infinite optimization problem (GSIP), and the corresponding non-degeneracy conditions are known to hold generically at each solution point of a  GSIP (cf. \cite{stein2002generalized} for the linear and \cite{gunzel2008generalized} for the nonlinear case).
\end{remark}

\begin{example}\label{ex:running2}
    In the situation of Example~\ref{ex:running} one obtains ${\cal I}_{\bar x}={\cal I}_{\bar y}=\{2\}$ and
    \begin{align*}
        [A_O^\top,\1_{\bar x}^\top]=\begin{pmatrix}
            1 & 0\\1 & 1
        \end{pmatrix},\quad [B_I^\top,\1^\top_{\bar y}]=\begin{pmatrix}
            -1 & 1 & 0\\1 & 0 & 1\\0 & 1 & 0
        \end{pmatrix},
    \end{align*}
    so that \eqref{OLICQ} and \eqref{ILICQ} hold at $\bar x$ and $\bar y\in Y(\bar x)$, respectively. Moreover, one obtains $T(\bar x,X)\setminus\{0\}=\emptyset$ and $T(\bar y,Y(\bar x))\setminus\{0\}=\emptyset$, so that \eqref{OSOSC} and \eqref{ISOSC} are trivially satisfied. Hence, Theorem~\ref{the:ndmeaningfulness} ensures that $\bar x$ is a strict local optimal solution of \eqref{eq:OuterProblem}.
\end{example}

\begin{remark}\label{rem:isc_suff_cond}
As seen above, at a point $(\bar x,\bar y,\bar\lambda_I,\bar s_I)$ with $(\bar y,\bar\lambda_I,\bar s_I)\in KKT_I(\bar x)$ the conditions \eqref{ILICQ}, \eqref{ISCS} and \eqref{ISOSC} imply the existence of a unique locally defined smooth function $(y(x),\lambda_I(x),s_I(x))$  which satisfies $(y(\bar x),\lambda_I(\bar x),s_I(\bar x))=(\bar y,\bar\lambda_I,\bar s_I)$ as well as $KKT_I(x)=\{(y(x),\lambda_I(x),s_I(x))\}$ for all $x$ from some neighborhood $U$ of $\bar x$.
Therefore, in view of $KKT_I(x)\subseteq S_D(x)$ (Lemma~\ref{lem:Wolfe}), the inner semicontinuity of $S_D$ required in Proposition~\ref{thm:LocalRelationshipTo(QP)}(b) holds at every such point 
$(\bar x,\bar y,\bar\lambda_I,\bar s_I)$.
\end{remark}

\begin{remark}
    The assertion of Theorem~\ref{the:ndmeaningfulness} remains valid in the absence of \eqref{OLICQ} and \eqref{OSCS}, if in \eqref{OSOSC} the tangent space $T(\bar x,X)$ is replaced by the outer critical cone
 \begin{align*}
\{d\in \mathbb{R}^n\mid \left(Q_{11}\bar x + Q_{12}\bar y + c_x + A^\top_I \bar\lambda_I \right)^\top d =  0,\ A_Od=0, \;\, d_i\geq 0, \;\, i\in \mathcal{I}_{\bar x} \}.
 \end{align*}
\end{remark}

 {To close this subsection, it might also be useful to note that a meaningfulness result (i.e., ensuring the same conclusion as in Theorem \ref{the:ndmeaningfulness} for a stationary point in the sense of Definition \ref{Def:StationarityConcept}) can be obtained without the need of the inner or outer strict complementarity condition; see Section \ref{Proof of Theorem Meaningfulness} for an alternative result to Theorem \ref{the:ndmeaningfulness} with the corresponding proof}. 

\subsection{Non-degenerate stationary points.}\label{sec:ndstat}
In Section~\ref{sec:ipm}, we will devise an interior point method (IPM) to approximate a stationary point of problem \eqref{eq:MinMax} in the sense of Definition~\ref{Def:StationarityConcept}. The convergence proof will require the non-singularity of the Jacobian of the system of equations appearing in \eqref{eq:KKTjoint}, which we abbreviate by
\begin{align*}
    0=\Phi(x,\lambda_O,s_O,y,\lambda_I,s_I):=\begin{bmatrix}
        Q_{11}x+Q_{12}y+c_x+A_I^\top\lambda_I+A_O^\top\lambda_O-s_O\\
            A_Ox-b_O\\
            s_O \circ x\\
            -Q_{22}y+Q_{12}^\top x+c_y+B_I^\top\lambda_I+s_I\\
        A_Ix+B_Iy-b_I\\
            s_I \circ y
    \end{bmatrix}.
\end{align*}
The Jacobian of $\Phi$ at $(\bar x,\bar\lambda_O,\bar s_O,\bar y,\bar\lambda_I,\bar s_I)$ is
\begin{align}\label{eq:DPhi}
    D\Phi(\bar x,\bar\lambda_O,\bar s_O,\bar y,\bar\lambda_I,\bar s_I)=\begin{bmatrix}
        Q_{11} & A_O^\top & -\mathds{1} & Q_{12} & A_I^\top & 0\\
        A_O & 0 & 0 & 0 & 0 & 0\\
        \diag(\bar s_O) & 0 & \diag(\bar x) & 0 & 0 & 0\\
        Q_{12}^\top & 0 & 0 & -Q_{22} & B_I^\top & \mathds{1}\\
        A_I & 0 & 0 & B_I & 0 & 0\\
        0 & 0 & 0 & \diag(\bar s_I) & 0 & \diag(\bar y)
    \end{bmatrix}.
\end{align}
  {Note that for the analysis of the matrix $D\Phi(\bar x,\bar\lambda_O,\bar s_O,\bar y,\bar\lambda_I,\bar s_I)$ at a solution point of \eqref{eq:KKTjoint} we may employ the additional conditions $\bar x,\bar s_O,\bar y,\bar s_I\geq0$ as well as $\bar y\in S(\bar x)$.}

\begin{definition}\label{def:ndstat}
    For a stationary point $\bar x$ of \eqref{eq:MinMax} with $\bar y\in S(\bar x)$ and multipliers $(\bar\lambda_O,\bar s_O,\bar\lambda_I,\bar s_I)$ we call $(\bar x,\bar y,\bar \lambda_O,\bar \lambda_I,\bar s_O,\bar s_I)$ non-degenerate if $D\Phi(\bar x,\bar\lambda_O,\bar s_O,\bar y,\bar\lambda_I,\bar s_I)$
is non-singular. We call $\bar x$ with $\bar y\in S(\bar x)$ a non-degenerate stationary point if it is stationary and if multipliers $(\bar\lambda_O,\bar s_O,\bar\lambda_I,\bar s_I)$ exist such that $(\bar x,\bar y,\bar \lambda_O,\bar \lambda_I,\bar s_O,\bar s_I)$ is non-degenerate.
\end{definition}

A natural sufficient condition for the non-singularity of $D\Phi(\bar x,\bar\lambda_O,\bar s_O,\bar y,\bar\lambda_I,\bar s_I)${at a solution point of \eqref{eq:KKTjoint}} arises from its block structure. It is strongly related to the notions of \eqref{ILICQ}, \eqref{ISCS},{\eqref{ISOSC}},\eqref{OLICQ},  {\eqref{OSCS} and \eqref{OSOSC}} introduced above.
To see this, we start with a discussion of the non-singularity of the lower right 3-by-3 block of $D\Phi(\bar x,\bar\lambda_O,\bar s_O,\bar y,\bar\lambda_I,\bar s_I)$,
  {namely the Jacobian $D_{(y,\lambda_I,s_I)}\Phi_I(\bar x,\bar y,\bar\lambda_I,\bar s_I)$ of $\Phi_I$ from \eqref{eq:defPhiI} (see \eqref{eq:DPhiI}).} 
By Laplace expansion of its determinant it is not hard to see that it is non-singular iff \eqref{ISCS} holds and the matrix{from \eqref{eq:M}} is non-singular. 

The latter can be characterized by 
the following result from \cite[Lemma 3.4]{MR787745} (see also \cite{MR907394}), where the inertia $\inertia(C)$ of a square matrix $C$ denotes the triple of the numbers of its positive, negative, and zero eigenvalues. Furthermore, for $A=A^\top \in \mathbb{R}^{q \times q }$ and $B \in \mathbb{R}^{t \times q}$ let the columns of $N$ form a basis of $\kernel B$. Then $A|_{\kernel B}:=N^\top A N$ is called the restriction of $A$ to $\kernel B$.

\begin{lemma} \label{lem:inertia}
 Let $A=A^\top \in \mathbb{R}^{q \times q }$ and let $B \in \mathbb{R}^{t \times q}$ possess rank $r$. Then the matrix  
 \begin{equation*}
     C= \begin{bmatrix}
         A & B^\top \\
         B & 0
         \end{bmatrix}
 \end{equation*}
 satisfies
\begin{equation}\label{eq:inertia}
    \inertia(C)=\inertia(A|_{\kernel B})+(r,r,t-r).
\end{equation}
 \end{lemma}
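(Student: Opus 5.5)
We will prove the inertia identity \eqref{eq:inertia} by reducing $C$ through a congruence transformation and then invoking Sylvester's law of inertia; congruence preserves inertia.

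\textbf{Step 1 (normalizing $B$).} Choose a nonsingular $P\in\R^{t\times t}$ and an orthogonal $V=[W,N]\in\R^{q\times q}$. The columns of $N\in\R^{q\times(q-r)}$ form a basis of $\kernel B$, and $W\in\R^{q\times r}$ spans $(\kernel B)^\perp$. We pick them so that
\[
PBV=\begin{bmatrix} B_1 & 0\\ 0 & 0\end{bmatrix}
\]
with $B_1\in\R^{r\times r}$ nonsingular, for instance via row reduction combined with a singular value decomposition. We then apply the congruence with $\diag(V,P^\top)$. This turns $C$ into the matrix with blocks
\[
\begin{bmatrix} W^\top AW & W^\top AN & B_1^\top & 0\\ N^\top AW & N^\top AN & 0 & 0\\ B_1 & 0 & 0 & 0\\ 0&0&0&0_{(t-r)}\end{bmatrix}.
\]
The last $t-r$ rows and columns are zero, so they contribute $(0,0,t-r)$ to the inertia.

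\textbf{Step 2 (eliminating couplings).} We use the nonsingular block $B_1$ to clear the blocks $W^\top AW$ and $W^\top AN$ by block row and column operations applied symmetrically. Subtracting suitable multiples of the third block row and column, namely $\tfrac12 W^\top AW B_1^{-1}$ and $N^\top AW B_1^{-1}$, gives a congruent matrix of the form
\[
\diag\left(\begin{bmatrix}0 & B_1^\top\\ B_1 & 0\end{bmatrix},\; N^\top A N,\; 0_{(t-r)}\right).
\]
Checking that these operations can be written as $E^\top(\cdot)E$ with $E$ nonsingular is the main bookkeeping step. We will write $E$ explicitly as a block unit triangular matrix and verify the product.

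\textbf{Step 3 (counting).} The block $\begin{bmatrix}0&B_1^\top\\B_1&0\end{bmatrix}$ has inertia $(r,r,0)$. To see this, it is congruent via $\diag(\1,B_1^{-\top})$ to $\begin{bmatrix}0&\1_r\\\1_r&0\end{bmatrix}$, whose eigenvalues are $\pm1$, each with multiplicity $r$. By definition, $N^\top AN=A|_{\kernel B}$. Its inertia does not depend on the choice of basis $N$, since two bases are related by a nonsingular matrix, which again gives a congruence. Adding the three contributions and applying Sylvester's law yields
\[
\inertia(C)=\inertia(A|_{\kernel B})+(r,r,t-r),
\]
which is \eqref{eq:inertia}.

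The only delicate point is the elimination in Step 2. It works because $B_1$ is invertible, which is exactly where the assumption $\operatorname{rank}B=r$ enters.
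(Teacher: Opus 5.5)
Your proof is correct. The congruence with $\diag(V,P^\top)$ produces exactly the block matrix you display. Take $L=\tfrac12 W^\top AWB_1^{-1}$, $K=N^\top AWB_1^{-1}$, and let $E$ be block unit lower triangular with $-L^\top$, $-K^\top$ in its third block row. Then
\[
E^\top\begin{bmatrix} W^\top AW & W^\top AN & B_1^\top\\ N^\top AW & N^\top AN & 0\\ B_1 & 0 & 0\end{bmatrix}E=\begin{bmatrix} 0 & 0 & B_1^\top\\ 0 & N^\top AN & 0\\ B_1 & 0 & 0\end{bmatrix},
\]
because $W^\top AW-LB_1-B_1^\top L^\top=0$ (this uses the symmetry of $A$) and $W^\top AN-B_1^\top K^\top=0$. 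One further block permutation, which is also a congruence, gives your direct sum. Sylvester's law then yields the formula. The argument also covers $r=0$ and $r=q$ under the empty-matrix convention of Remark~\ref{rem:emptymatrix}.

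A small simplification: a single SVD $B=U\Sigma V^\top$ supplies both $P=U^\top$ and $V=[W,N]$ at once, so no separate row reduction is needed.

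As for the comparison, the paper gives no proof of this lemma; it quotes it from \cite[Lemma 3.4]{MR787745} (see also \cite{MR907394}). Your argument is therefore a self-contained replacement for that citation rather than a competing route. What it adds is transparency: the reader sees that $\operatorname{rank}B=r$ enters only through the invertibility of $B_1$, and that the $t-r$ zero eigenvalues come from the redundant rows of $B$.
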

 
\begin{remark}\label{rem:emptymatrix}
 In the statement of Lemma~\ref{lem:inertia}, $t=q=r$ leads to $\kernel B=\{0\}$, so that no matrix $N$ can be chosen. This case can be formally encompassed by considering a $(q,0)$-matrix $N$, which makes $N^\top AN$ a $(0,0)$-matrix with inertia $(0,0,0)$. With this convention, the inertia formula of the lemma holds also in this case. In particular, for $A=0$ and $t=q=r$ not only the matrix $C$ is non-singular, but also the empty matrix $0|_{\{0\}}$ is, due the absence of vanishing eigenvalues.
\end{remark}

With the notation from Lemma~\ref{lem:inertia}, by \eqref{eq:inertia} $C$ is non-singular iff $A|_{\kernel B}$ is non-singular and $B$ possesses full row rank. Therefore, the non-singularity of the matrix in \eqref{eq:M}
is equivalent to the non-singularity of $-Q_{22}$ restricted to the tangent space (cf. \eqref{eq:defTI})
\begin{align*}
    T(\bar y,Y(\bar x))=\kernel\begin{bmatrix}
        B_I\\\1_{\bar y}\end{bmatrix}
\end{align*}
and to the full row rank of the matrix $\begin{bmatrix}
        B_I\\\1_{\bar y}\end{bmatrix}$, which coincides with \eqref{ILICQ}.
 Since we assume $-Q_{22}$ to be negative semi-definite, the non-singularity of $-Q_{22}|_{T(\bar y,Y(\bar x))}$ is equivalent to its negative definiteness,{which coincides with \eqref{ISOSC}}.
This motivates the following definition.

\begin{definition}
    For $\bar x\in X$, a point $\bar y\in S(\bar x)$ is called{inner} non-degenerate if{\eqref{ILICQ}, \eqref{ISCS} and \eqref{ISOSC} are satisfied.}
\end{definition}
The above derivation shows the following result.
\begin{lemma}\label{lem:ynd}
 For $\bar x\in X$ a point $\bar y\in S(\bar x)$ is{inner} non-degenerate if and only if  the Jacobian
 $D_{(y,\lambda_I,s_I)}\Phi_I(\bar x,\bar y,\bar\lambda_I,\bar s_I)$ is non-singular.
\end{lemma}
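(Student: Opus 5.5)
The plan is to prove the two implications separately. Both rest on two facts: the Laplace-type reduction of the Jacobian \eqref{eq:DPhiI} to the matrix \eqref{eq:M} together with \eqref{ISCS}, and Lemma~\ref{lem:inertia} applied to \eqref{eq:M}. Throughout, $\bar y\in S(\bar x)$ with multipliers $(\bar\lambda_I,\bar s_I)$ satisfying the inner KKT system, so $\bar y,\bar s_I\geq 0$ and $\bar y\circ\bar s_I=0$.

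\emph{Step 1 (reduction to \eqref{ISCS} and \eqref{eq:M}).} Split the indices $j=1,\ldots,m$ into the set $\mathcal I_{\bar y}$, where $\bar y_j=0$, and its complement, where $\bar y_j>0$ and hence $[\bar s_I]_j=0$.
\begin{itemize}
\item Suppose some $j$ has $\bar y_j=[\bar s_I]_j=0$. Then the $j$-th row of the block $(\diag(\bar s_I),0,\diag(\bar y))$ vanishes, so the Jacobian is singular. Hence non-singularity of the Jacobian forces \eqref{ISCS}.
\item Assume \eqref{ISCS}. For $j\notin\mathcal I_{\bar y}$, row $j$ of the last block reads $\bar y_j\,\delta s_j=0$, so $\delta s_j=0$ and the column of $s_j$ can be removed.
\item For $j\in\mathcal I_{\bar y}$, we have $[\bar s_I]_j>0$, so the row reads $[\bar s_I]_j\,\delta y_j=0$, i.e.\ $\delta y_j=0$. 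This is exactly the constraint $\1_{\bar y}\delta y=0$, and the surviving $s_j$ variables enter the first block row through $\1_{\bar y}^\top$.
\end{itemize}
Eliminating variables in this way, which amounts to a Laplace expansion along these rows with nonzero pivots, shows that under \eqref{ISCS} the Jacobian is non-singular iff \eqref{eq:M} is non-singular.

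\emph{Step 2 (inertia).} Apply Lemma~\ref{lem:inertia} with $A=-Q_{22}$ and $B=\begin{bmatrix}B_I\\ \1_{\bar y}\end{bmatrix}$. The formula \eqref{eq:inertia} shows that \eqref{eq:M} is non-singular iff two conditions hold:
\begin{itemize}
\item $t-r=0$, i.e.\ $B$ has full row rank, which is \eqref{ILICQ};
\item the restriction $-Q_{22}|_{T(\bar y,Y(\bar x))}$ has no zero eigenvalue, using \eqref{eq:defTI}.
\end{itemize}
Because $-Q_{22}$ is negative semidefinite, $N^\top(-Q_{22})N$ is negative semidefinite. It is therefore non-singular iff it is negative definite, which is exactly \eqref{ISOSC}. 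The empty-kernel case is covered by Remark~\ref{rem:emptymatrix}.

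\emph{Step 3 (conclusion).} Steps~1 and~2 together show that the Jacobian is non-singular iff \eqref{ISCS}, \eqref{ILICQ} and \eqref{ISOSC} hold, which is inner non-degeneracy.

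One subtlety is that \eqref{ISCS} is formally phrased "under \eqref{ILICQ}", i.e.\ it presumes unique multipliers. In the "if" direction \eqref{ILICQ} is assumed. In the "only if" direction, Step~2 yields \eqref{ILICQ} from the Jacobian's non-singularity, and Step~1 yields \eqref{ISCS} for the given multipliers, which are then the unique ones. Making this multiplier bookkeeping clean is the main obstacle. The remaining work is the routine determinant reduction in Step~1, which is best done by examining the kernel rather than expanding the determinant directly.
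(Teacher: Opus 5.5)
Your proposal is correct and follows essentially the same route as the paper. You reduce non-singularity of the Jacobian \eqref{eq:DPhiI} to \eqref{ISCS} plus non-singularity of the matrix \eqref{eq:M}, arguing via the kernel rather than a Laplace expansion of the determinant (which is equivalent), and then use Lemma~\ref{lem:inertia} and the negative semidefiniteness of $-Q_{22}$ to identify this with \eqref{ILICQ} and \eqref{ISOSC}, while also making explicit the multiplier bookkeeping (obtaining \eqref{ILICQ} first in the ``only if'' direction so that the given $\bar s_I$ is the unique multiplier) that the paper leaves implicit.
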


\begin{remark}\label{rem:innerlinear}
We emphasize that even for a linear inner problem $(P_I(\bar x))$ a point $\bar y\in S(\bar x)$ may be{inner} non-degenerate. Linearity implies $Q_{22}=0$, so that negative definiteness of $-Q_{22}|_{T(\bar x,Y(\bar x))}$ requires $T(\bar x,Y(\bar x))=\{0\}$ (see Remark~\ref{rem:emptymatrix}). For an{inner} non-degenerate point $\bar y$ this means that \eqref{ILICQ} must hold with $|{\cal I}_{\bar y} |=m-q$, so that in linear programming terminology $\bar y$ is a non-degenerate vertex of $Y(\bar x)$. Moreover, \eqref{ISCS} yields $[s_I]_{{\cal I}_{\bar y}}>0$ in this case, which in linear programming is sometimes called dual non-degeneracy of $\bar y$ for $(P_I(\bar x))$. 
\end{remark}

Next we discuss the non-singularity of the full Jacobian matrix given in \eqref{eq:DPhi}.
Again, by Laplace expansion of the determinant of $D\Phi(\bar x,\bar\lambda_O,\bar s_O,\bar y,\bar\lambda_I,\bar s_I)$, one sees that its non-singularity is equivalent to \eqref{OSCS}, \eqref{ISCS} and the non-singularity of
\begin{align}\label{eq:K}
    K:=\begin{bmatrix}
        Q_{11} & A_O^\top & -\mathds{1}_{\bar x}^\top & Q_{12} & A_I^\top & 0\\
        A_O & 0 & 0 & 0 & 0 & 0\\
        -\mathds{1}_{\bar x} & 0 & 0 & 0 & 0 & 0\\
        Q_{12}^\top & 0 & 0 & -Q_{22} & B_I^\top & \mathds{1}_{\bar y}^\top\\
        A_I & 0 & 0 & B_I & 0 & 0\\
        0 & 0 & 0 & \mathds{1}_{\bar y} & 0 & 0
    \end{bmatrix}=\begin{bmatrix}
        K_{11} & K_{12}\\K_{12}^\top&K_{22}
    \end{bmatrix}
\end{align}
with
\begin{align*}
    K_{11}:=\begin{bmatrix}
        Q_{11} & A_O^\top & -\mathds{1}_{\bar x}^\top\\
        A_O & 0 & 0 \\
        -\mathds{1}_{\bar x} & 0 & 0
    \end{bmatrix},\quad  K_{22}:=\begin{bmatrix}
        -Q_{22} & B_I^\top & \mathds{1}_{\bar y}^\top\\
        B_I & 0 & 0\\
        \mathds{1}_{\bar y} & 0 & 0
    \end{bmatrix},\quad
    K_{12}:=\begin{bmatrix}
        Q_{12} & A_I^\top & 0\\
        0 & 0 & 0\\
        0 & 0 & 0
    \end{bmatrix},
\end{align*}
and where $K_{22}$ is the matrix from \eqref{eq:M}.
Hence, if we assume $\bar y$ to be{inner} non-degenerate, then $K_{22}$ is non-singular by Lemma~\ref{lem:ynd}, and $K$ is non-singular if and only if the Schur complement
\begin{align*}
    K/K_{22} &= K_{11}-K_{12} K_{22}^{-1} K_{12}^\top= \begin{bmatrix}
        Q_{11}'& A_O^\top& -\mathds{1}_{\bar x}^\top\\
        A_O & 0 & 0\\
        -\mathds{1}_{\bar x} & 0 & 0
    \end{bmatrix}
\end{align*}
with{$Q_{11}'$ from \eqref{eq:D2phi} is.}
By Lemma~\ref{lem:inertia}, and since the sign of $\1_{\bar x}$ is irrelevant for this argument, the{non-singularity of $K/K_{22}$} is equivalent to \eqref{OLICQ} at $\bar x$ in $X$ and the non-singularity of $Q_{11}'|_{T(\bar x,X)}$, where{$T(\bar x,X)$ is the tangent space from \eqref{eq:defTO}. 
While above we used that the non-singularity of $-Q_{22}|_{T(\bar y,Y(\bar x))}$ is equivalent to its negative definiteness and, thus, to \eqref{ISOSC} at any solution of \eqref{eq:KKTjoint}, an analogous argument cannot be made for $Q_{11}'|_{T(\bar x,X)}$. Therefore we introduce the outer second order \emph{regularity} condition
\begin{equation}\label{OSORC}\tag{OSORC}
 Q_{11}'|_{T(\bar x,X)} \mbox{ is non-singular},
\end{equation}
which is weaker than \eqref{OSOSC}.}
 
\begin{definition}\label{def:outernd}
    A stationary point $\bar x$ of \eqref{eq:MinMax} with{inner} non-degenerate $\bar y\in S(\bar x)$ is called outer non-degenerate, if \eqref{OLICQ}, \eqref{OSCS}, and{\eqref{OSORC}} hold.{If instead of \eqref{OSORC} even \eqref{OSOSC} holds, then $\bar x$ is called a non-degenerate local minimal point (cf. Theorem~\ref{the:ndmeaningfulness}).}
\end{definition}

The above derivation proves the following result.

\begin{theorem}\label{the:ndstat}
For $\bar x\in X$, let $\bar y\in S(\bar x)$ be{inner} non-degenerate. Then $\bar x$ is a non-degenerate stationary point of \eqref{eq:MinMax} if and only if $\bar x$ is stationary and outer non-degenerate.
\end{theorem}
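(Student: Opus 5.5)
The plan is to assemble the chain of equivalences sketched before the statement, and to make each link rigorous. Fix a solution $(\bar x,\bar\lambda_O,\bar s_O,\bar y,\bar\lambda_I,\bar s_I)$ of \eqref{eq:KKTjoint}. Inner non-degeneracy includes \eqref{ILICQ}, so the inner multipliers are unique. Under \eqref{OLICQ} the outer multipliers are unique as well. Without \eqref{OLICQ}, non-degeneracy of any multiplier choice will be shown to force \eqref{OLICQ}. Hence the phrase ``multipliers exist such that'' in Definition~\ref{def:ndstat} causes no ambiguity, and it suffices to prove that $D\Phi$ in \eqref{eq:DPhi} is non-singular if and only if \eqref{OLICQ}, \eqref{OSCS} and \eqref{OSORC} hold.

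\textbf{Step 1 (reduction to $K$).} Permute rows and columns so that, for every non-active index $i\notin{\cal I}_{\bar x}$ (where $\bar x_i>0$), the row $[s_O]_i\,dx_i+\bar x_i\,d[s_O]_i$ is used to eliminate $d[s_O]_i$. For every active index $i\in{\cal I}_{\bar x}$ the row reduces to $[\bar s_O]_i\,dx_i=0$. If some $[\bar s_O]_i=0$ with $\bar x_i=0$, that row of $D\Phi$ vanishes, so \eqref{OSCS} is necessary; the analogous argument makes \eqref{ISCS} necessary. Conversely, under \eqref{OSCS} and \eqref{ISCS} a block elimination shows that $\det D\Phi$ equals a nonzero product of the $\bar x_i$, $[\bar s_O]_i$, $\bar y_j$, $[\bar s_I]_j$ times $\pm\det K$, with $K$ from \eqref{eq:K}. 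Note that $[\bar s_O]_i=0$ for $\bar x_i>0$ by complementarity, so the eliminated $s_O$-columns contribute nothing to the first block row. Hence $D\Phi$ is non-singular if and only if \eqref{OSCS}, \eqref{ISCS} hold and $K$ is non-singular. Since $\bar y$ is inner non-degenerate, \eqref{ISCS} holds by assumption.

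\textbf{Step 2 (Schur complement).} By Lemma~\ref{lem:ynd} and the discussion preceding it, $K_{22}$ is non-singular. The determinant identity $\det K=\det K_{22}\det(K/K_{22})$ then reduces the question to the non-singularity of $K/K_{22}$. The only thing to compute here is that $K_{12}K_{22}^{-1}K_{12}^\top$ is nonzero only in its upper-left block, which equals $[Q_{12}\ A_I^\top\ 0]K_{22}^{-1}[Q_{12}\ A_I^\top\ 0]^\top$. This gives the displayed form with $Q_{11}'$ from \eqref{eq:D2phi}.

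\textbf{Step 3 (inertia).} Apply Lemma~\ref{lem:inertia} with $A=Q_{11}'$ (symmetric, because $K_{22}$ is symmetric) and $B=[A_O^\top,-\1_{\bar x}^\top]^\top$. Flipping the sign of rows of $B$ changes neither its rank nor its kernel, so $\kernel B=T(\bar x,X)$. By \eqref{eq:inertia}, $K/K_{22}$ is non-singular if and only if $t=r$, which is \eqref{OLICQ}, and $Q_{11}'|_{T(\bar x,X)}$ has no zero eigenvalue, which is \eqref{OSORC}. The degenerate case $T(\bar x,X)=\{0\}$ is covered by Remark~\ref{rem:emptymatrix}.

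Combining the three steps yields the claim in both directions. The main obstacle is Step~1: the determinant bookkeeping for the complementarity rows, and in particular showing that \emph{failure} of \eqref{OSCS} forces singularity. I would argue this by exhibiting the zero row $e_i^\top$-block, since $\bar x_i=[\bar s_O]_i=0$ annihilates the entire $i$-th row of the third block row. I would then note that non-singularity of $D\Phi$ excludes this case.
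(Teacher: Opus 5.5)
Your proposal is correct and follows the same route as the paper's derivation preceding the theorem: a Laplace expansion of $\det D\Phi$ reduces non-singularity to \eqref{OSCS}, \eqref{ISCS} and non-singularity of $K$, then a Schur complement with respect to the non-singular $K_{22}$, and finally Lemma~\ref{lem:inertia} identifies non-singularity of $K/K_{22}$ with \eqref{OLICQ} and \eqref{OSORC}. Your zero-row argument for the necessity of \eqref{OSCS} and your remarks on multiplier uniqueness only make explicit details that the paper leaves implicit.
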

  {
\begin{example}\label{ex:running4}  
In the situation of Example~\ref{ex:running} the considerations from Example~\ref{ex:running2} yield that \eqref{ISOSC} and \eqref{OSORC} are trivially satisfied (Remark~\ref{rem:emptymatrix}). Hence, by Theorem~\ref{the:ndstat} $\bar x$ is a non-degenerate stationary point of \eqref{eq:MinMax}. This illustrates, in particular, that the singularity of $Q_{11}$ and $Q_{22}$ does not prevent non-degenerate stationarity. In addition, this situation is stable under variations of the parameter $\delta>4/3$.
\end{example}
}

\subsection{  {Non-degenerate} stationarity and local saddle points.}\label{Stationarity and local saddle points}

  {A main idea of the algorithmic approach presented in Section~\ref{sec:ipm} is to treat the variables $x$ and $y$ not separately, but as a joint variable $z=(x,y)$. Indeed, standard solution methods for minmax problems alternately solve the outer problem in $x$ and the inner problem in $y$, while our proposal is a single loop method in the sense that it solves a joint problem in the joint variable $z$. In this joint problem, the stationary points of \eqref{eq:MinMax} in the sense of Definition~\ref{Def:StationarityConcept} correspond to local saddle points. The corresponding results need a   reordering of the variable vector as well as of the equations in \eqref{eq:KKTjoint}. Since the Jacobian matrix of the reordered system will turn out to be congruent to $D\Phi(\bar x,\bar\lambda_O,\bar s_O,\bar y,\bar\lambda_I,\bar s_I)$, it is also non-singular under the assumptions of Theorem~\ref{the:ndstat}, i.e., under the conditions derived for the separate variable model.}

  {In the following we will study a} local saddle point property of the function $f$ on the combined outer and inner feasible set
\begin{align*}
    \Omega&=\{(x,y)\in X\times\R^m:\ y\in Y(x)\}\\
    &=\{(x,y)\in\R^n\times\R^m:\ A_Ox=b_O,\ A_Ix+B_Iy=b_I,\ x,y\geq0\}.
\end{align*}
In fact, $\Omega$ is the graph of the set-valued mapping $Y:X\rightrightarrows\R^m$. Furthermore, let 
\begin{equation}\label{eq:Matrix_Notation}
    z :=\begin{bmatrix}
	x \\
	y
\end{bmatrix}, \;
Q:=\begin{bmatrix}
	Q_{11}& Q_{12} \\
	Q_{12}^\top & -Q_{22} 
\end{bmatrix}, \;
c:=\begin{bmatrix}
	c_x \\
	c_y
\end{bmatrix},\; 
A:=\begin{bmatrix}
	A_O & 0  \\
	A_I         &   B_I 
\end{bmatrix},\;
b:= \begin{bmatrix}
	{{b_O}} \\
    b_I
\end{bmatrix},
\end{equation}
and consider the function $f(z)=\frac12 z^\top Q z+c^\top z$ on the set $\Omega$, which can now be written compactly as $\Omega=\{z\in\R^{n+m}:Az=b,\,z\geq0\}$. 

Recall that for an unconstrained differentiable function a point is called a local saddle point if it is a critical point, but neither a local minimal nor a local maximal point. Therefore, each neighborhood of the critical point contains points with smaller as well as points with larger function values than at the critical point. For a detailed comparison of the concepts of local and global saddle points we refer to \cite[Subsection 3.5]{Ste25}.

The generalization of this concept to constrained functions needs a stationarity concept which encompasses, both, local minimal and local maximal points. Since this is not the case for KKT points, following the terminology from \cite{jongen1986critical} we will call $\bar z$ a critical point of $f$ on $\Omega$ if a relaxed KKT system is satisfied which does not require the non-negativity of multipliers corresponding to inequality constraints. 

\begin{definition}\label{def:criticalpoint}
A point $\bar z\in\Omega$ is called
\begin{itemize}
    \item[(a)] critical point of $f$ on $\Omega$ if
multipliers $\bar\lambda\in\R^{p+q}$, $\bar\sigma\in\R^{n+m}$ exist such that $(\bar z,\bar\lambda,\bar\sigma)$ solves the system
\begin{align}\label{eq:Psi}
0=\Psi(z,\lambda,\sigma):=\begin{bmatrix}
    Qz+c+A^\top\lambda+\sigma\\ Az-b\\z\circ \sigma
\end{bmatrix},
\end{align}
\item[(b)] local saddle point of $f$ on $\Omega$ if $\bar z$ is a critical point, but neither a local minimal nor a local maximal point of $f$ on $\Omega$.
\end{itemize}
\end{definition}

\begin{lemma}\label{lem:statcrit}
A point $\bar x\in X$ with $\bar y\in S(\bar x)$ is stationary for \eqref{eq:MinMax} with multipliers $\bar\lambda_O$, $\bar s_O$, $\bar\lambda_I$, and $\bar s_I$ if and only if $\bar z=(\bar x,\bar y)\in\Omega$ is a critical point of $f$ on $\Omega$ with multipliers 
\[
\bar\lambda=\begin{bmatrix}
	\bar\lambda_O \\
	\bar\lambda_I
\end{bmatrix} \;\, \mbox{ and }\;\,
\bar\sigma=\begin{bmatrix}
	-\bar s_O \\
	\bar s_I
\end{bmatrix}, \;\,\bar s_O, \;\, \bar s_I\geq0.
\]
\end{lemma}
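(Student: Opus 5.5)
The statement is essentially a bookkeeping identity, so the plan is to write out $\Psi(\bar z,\bar\lambda,\bar\sigma)=0$ blockwise, using the notation in \eqref{eq:Matrix_Notation}, and compare it line by line with the joint system \eqref{eq:KKTjoint}. With $z=(x,y)$, $\lambda=(\lambda_O,\lambda_I)$ and $\sigma=(\sigma_x,\sigma_y)$, one has
\[
A^\top\lambda=\begin{bmatrix} A_O^\top\lambda_O+A_I^\top\lambda_I\\ B_I^\top\lambda_I\end{bmatrix},\qquad Qz+c=\begin{bmatrix} Q_{11}x+Q_{12}y+c_x\\ Q_{12}^\top x-Q_{22}y+c_y\end{bmatrix}.
\]
Consequently, the first block row of $\Psi$ splits into two equations:
\[
Q_{11}x+Q_{12}y+c_x+A_O^\top\lambda_O+A_I^\top\lambda_I+\sigma_x=0,
\]
\[
-Q_{22}y+Q_{12}^\top x+c_y+B_I^\top\lambda_I+\sigma_y=0.
\]
The second block row is $A_Ox=b_O$ together with $A_Ix+B_Iy=b_I$. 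The third block row is $x\circ\sigma_x=0$ together with $y\circ\sigma_y=0$.

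For the forward direction, assume $(\bar x,\bar y,\bar\lambda_O,\bar s_O,\bar\lambda_I,\bar s_I)$ solves \eqref{eq:KKTjoint}. Set $\bar\sigma_x=-\bar s_O$ and $\bar\sigma_y=\bar s_I$. The first-row equations then become exactly the outer and inner gradient equations of \eqref{eq:KKTjoint}. The linear equations coincide, and the complementarity conditions carry over because $\bar x\circ(-\bar s_O)=-(\bar x\circ\bar s_O)=0$. The conditions $\bar x,\bar y\ge0$, $A_O\bar x=b_O$ and $A_I\bar x+B_I\bar y=b_I$ give $\bar z\in\Omega$. The sign conditions $\bar s_O,\bar s_I\ge0$ come directly from \eqref{eq:KKTjoint}, so $\bar z$ is a critical point with the stated multipliers.

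For the converse, suppose $\bar z\in\Omega$ is critical with multipliers of the stated form, including $\bar s_O,\bar s_I\ge0$. Substituting $\bar\sigma$ back reverses each identity above. Membership $\bar z\in\Omega$ gives $\bar x\in X$ and $\bar y\in Y(\bar x)$ together with nonnegativity. Hence the whole of \eqref{eq:KKTjoint} holds, and $\bar x$ is stationary with $\bar y\in S(\bar x)$. Here $\bar y\in S(\bar x)$ is already a hypothesis, and it is in any case implied by the inner KKT system by concavity.

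There is no genuine obstacle. The only points needing care are the sign flip $\sigma_x=-s_O$, which is exactly why Definition~\ref{def:criticalpoint} drops sign constraints on $\sigma$, and keeping $A_I^\top\lambda_I$ in the $x$-block of $A^\top\lambda$ rather than losing it in the block transpose.
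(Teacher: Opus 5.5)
Your proposal is correct and takes essentially the same approach as the paper. Both write out $\Psi(\bar z,\bar\lambda,\bar\sigma)=0$ with $\bar\sigma=(-\bar s_O,\bar s_I)$ and observe that it coincides with \eqref{eq:KKTjoint} up to reordering and the sign flip in $x\circ(-s_O)=0$, and both note for the converse that the inner KKT system already gives $\bar y\in S(\bar x)$.
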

\begin{proof}
     
    In more detailed form the system $0=\Psi(z,\lambda,\sigma)$ with $\sigma:=\begin{bmatrix}
	-s_O \\
	s_I
\end{bmatrix}$ reads
\begin{align*}
    0=\Psi(x,y,\lambda_O,\lambda_I,-s_O,s_I)=\begin{bmatrix}
    Q_{11}x+Q_{12}y+c_x+A_O^\top\lambda_O+A_I^\top \lambda_I-s_O\\
    Q_{12}^\top x-Q_{22}y+c_y+B_I^\top\lambda_I+s_I\\
    A_Ox-b_O\\
    A_Ix+B_Iy-b_I\\
    x\circ(-s_O)\\
    y\circ s_I
\end{bmatrix}
\end{align*}
and is thus, up to reordering and a flip of sign in the fifth equation, identical to the equations from the stationarity system \eqref{eq:KKTjoint} in Definition~\ref{Def:StationarityConcept}. Hence   the ``only if'' part of the assertion is clear, and to see the ``if'' part, recall that the assumption implies that $\bar y$ is a KKT point of the inner problem and, thus, an element of $S(\bar x)$.
\end{proof}

To show that the stationary points of \eqref{eq:MinMax} are{typically} not only critical points, but even saddle points of $f$ on $\Omega$,{we introduce an appropriate non-degeneracy concept for critical points of $f$ on $\Omega$.}

\begin{definition}\label{def:ndcriticalpoint}
A point $\bar z\in\Omega$ is called
a non-degenerate critical point of $f$ on $\Omega$ if $\bar z$ is a critical point with some multipliers $\bar\lambda,\bar\sigma$ such that the Jacobian
\begin{align*}
    D\Psi(\bar z,\bar\lambda,\bar\sigma)=\begin{bmatrix}
        Q & A^\top & \1\\
        A & 0 & 0\\
        \diag(\bar\sigma) & 0 & \diag(\bar z)
    \end{bmatrix}
\end{align*}
of $\Psi$ from \eqref{eq:Psi} is non-singular.
\end{definition}
\begin{lemma}\label{lem:DPhiDPsi}
    A point $\bar x\in X$ with $\bar y\in S(\bar x)$ is a non-degenerate stationary point of \eqref{eq:MinMax} with multipliers $\bar\lambda_O$, $\bar s_O$, $\bar\lambda_I$, $\bar s_I$ if and only if $\bar z=(\bar x,\bar y)\in\Omega$ is a non-degenerate critical point of $f$ on $\Omega$ with multipliers $\bar\lambda=\begin{bmatrix}
	\bar\lambda_O \\
	\bar\lambda_I
\end{bmatrix}$ and
$\bar\sigma:=\begin{bmatrix}
	-\bar s_O \\
	\bar s_I
\end{bmatrix}$, $\bar s_O,\bar s_I\geq0$.
\end{lemma}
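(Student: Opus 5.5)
By Lemma~\ref{lem:statcrit}, stationarity of $\bar x$ with $\bar y\in S(\bar x)$ and multipliers $\bar\lambda_O,\bar s_O,\bar\lambda_I,\bar s_I$ is equivalent to criticality of $\bar z=(\bar x,\bar y)$ on $\Omega$ with multipliers $\bar\lambda=(\bar\lambda_O,\bar\lambda_I)$ and $\bar\sigma=(-\bar s_O,\bar s_I)$. It therefore only remains to show that, at these corresponding points,
\[
D\Phi(\bar x,\bar\lambda_O,\bar s_O,\bar y,\bar\lambda_I,\bar s_I)\ \text{is non-singular} \iff D\Psi(\bar z,\bar\lambda,\bar\sigma)\ \text{is non-singular}.
\]
The plan is to exhibit permutation matrices $P_r$, $P_c$ and a diagonal sign matrix $S$ with entries $\pm1$ such that $D\Psi = P_r\, S\, D\Phi\, P_c$ (possibly with a further diagonal sign scaling on the column side). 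Since all these factors are invertible, the equivalence follows.

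First, we write out $D\Psi$ in block form with respect to the variable ordering $(x,y,\lambda_O,\lambda_I,\sigma_O,\sigma_I)$, where $\sigma=(\sigma_O,\sigma_I)$. This gives:
\begin{itemize}
\item first block row $[Q_{11},\,Q_{12},\,A_O^\top,\,A_I^\top,\,\1,\,0]$;
\item second block row $[Q_{12}^\top,\,-Q_{22},\,0,\,B_I^\top,\,0,\,\1]$;
\item then the rows $[A_O,0,0,0,0,0]$ and $[A_I,B_I,0,0,0,0]$;
\item finally the complementarity rows $[\diag(\bar\sigma_O),0,0,0,\diag(\bar x),0]$ and $[0,\diag(\bar\sigma_I),0,0,0,\diag(\bar y)]$.
\end{itemize}
Substituting $\bar\sigma_O=-\bar s_O$ and $\bar\sigma_I=\bar s_I$, the row $[\diag(-\bar s_O),0,\dots,\diag(\bar x),0]$ appears.

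Next, we compare with \eqref{eq:DPhi}, whose column ordering is $(x,\lambda_O,s_O,y,\lambda_I,s_I)$. Two adjustments are needed. The column in $D\Phi$ with respect to $s_O$ carries $-\1$ and $\diag(\bar x)$, while the column in $D\Psi$ with respect to $\sigma_O$ carries $+\1$ and $\diag(\bar x)$. Multiplying that column by $-1$ (which corresponds to the change of variable $\sigma_O=-s_O$) and multiplying the outer complementarity row by $-1$ (the sign flip already noted in the proof of Lemma~\ref{lem:statcrit}) reconciles the two. Concretely, the outer complementarity row of $D\Psi$ is $[-\diag(\bar s_O),\dots,\diag(\bar x)]$; after scaling the $\sigma_O$-column by $-1$ it becomes $[-\diag(\bar s_O),\dots,-\diag(\bar x)]$, and after scaling the row by $-1$ it becomes $[\diag(\bar s_O),\dots,\diag(\bar x)]$, as in $D\Phi$. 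The first block row, in turn, acquires $-\1$ in the $s_O$-position, matching $D\Phi$. All other blocks coincide after permuting rows and columns into the ordering of \eqref{eq:DPhi}.

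Consequently $D\Psi = P_r S_r\, D\Phi\, S_c P_c$ with permutation matrices $P_r,P_c$ and diagonal $\pm1$ matrices $S_r,S_c$. Hence $|\det D\Psi|=|\det D\Phi|$, and the equivalence holds. Combining this with Definition~\ref{def:ndstat} and Definition~\ref{def:ndcriticalpoint} yields the claim in both directions.

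The only delicate step is the sign bookkeeping in the $s_O$ column and the corresponding complementarity row; the rest is a pure reordering. Note that the congruence statement promised in the text is slightly stronger than what is needed here, since non-singularity only requires equivalence of the matrices via invertible factors.
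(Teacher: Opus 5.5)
Your proof is correct and is essentially the paper's argument. The paper also reduces to Lemma~\ref{lem:statcrit} and then relates the two Jacobians by a signed permutation, written as the congruence $D\Psi(\bar z,\bar\lambda,\bar\sigma)=V^\top D\Phi(\bar x,\bar\lambda_O,\bar s_O,\bar y,\bar\lambda_I,\bar s_I)\,V$, where $V$ is a permutation matrix with $-\1$ in the $(s_O,\sigma_O)$ block. Your factors $P_rS_r$ and $S_cP_c$ can be taken to be exactly $V^\top$ and $V$, and your sign flips of the $s_O$ column and the outer complementarity row are precisely what that congruence encodes.
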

\begin{proof}
The equivalence of stationarity and criticality holds by Lemma~\ref{lem:statcrit}. Hence, it remains to show that 
\begin{align*}
    D\Phi(\bar x,\bar\lambda_O,\bar s_O,\bar y,\bar\lambda_I,\bar s_I)=\begin{bmatrix}
        Q_{11} & A_O^\top & -\mathds{1} & Q_{12} & A_I^\top & 0\\
        A_O & 0 & 0 & 0 & 0 & 0\\
        \diag(\bar s_O) & 0 & \diag(\bar x) & 0 & 0 & 0\\
        Q_{12}^\top & 0 & 0 & -Q_{22} & B_I^\top & \mathds{1}\\
        A_I & 0 & 0 & B_I & 0 & 0\\
        0 & 0 & 0 & \diag(\bar s_I) & 0 & \diag(\bar y)
    \end{bmatrix}
\end{align*}
is non-singular iff
\begin{align}
    D\Psi(\bar z,\bar\lambda,\bar\sigma)&=D\Psi(\bar x,\bar y,\bar\lambda_O,\bar\lambda_I,-\bar s_O,\bar s_I)\nonumber\\&=\begin{bmatrix}
        Q_{11} & Q_{12} & A_O^\top & A_I^\top & \1 & 0\\
        Q_{12}^\top & -Q_{22} & 0 & B_I^\top & 0 & \1\\
        A_O & 0 & 0 & 0 & 0 & 0\\
        A_I & B_I & 0 & 0 & 0 & 0\\
        -\diag(\bar s_O) & 0 & 0 & 0 & \diag(\bar x) & 0\\
        0 & \diag(\bar s_I) & 0 & 0 & 0 & \diag(\bar y)\label{eq:DPsi_explicit}
            \end{bmatrix}
\end{align}
is.
 This readily follows  from the congruence $D\Psi(\bar z,\bar\lambda,\bar\sigma)=V^\top D\Phi(\bar x,\bar\lambda_O,\bar s_O,\bar y,\bar\lambda_I,\bar s_I)\,V$ with
 \begin{align*}
  V=\begin{bmatrix}
        \1 & 0 & 0 & 0 & 0 & 0 \\
        0 & 0 & \1 & 0 & 0 & 0 \\
        0 & 0 & 0 & 0 & -\1 & 0 \\
        0 & \1 & 0 & 0 & 0 & 0 \\
        0 & 0 & 0 & \1 & 0 & 0 \\
        0 & 0 & 0 & 0 & 0 & \1     
      \end{bmatrix}.
 \end{align*}
  
\end{proof}
Lemma~\ref{lem:DPhiDPsi} and Theorem~\ref{the:ndstat} entail the following characterization of non-degenerate criticality.
\begin{corollary}\label{cor:ndcrit}
For $\bar x\in X$ let $\bar y\in S(\bar x)$ be{inner} non-degenerate. Then $\bar z=(\bar x,\bar y)$ is a non-degenerate critical point of \eqref{eq:MinMax} if and only if $\bar z$ is a critical point and $\bar x$ is outer non-degenerate.
\end{corollary}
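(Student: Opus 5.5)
This is essentially a chaining of Lemma~\ref{lem:DPhiDPsi} and Theorem~\ref{the:ndstat}; the only care needed concerns how the multipliers are quantified. Fix $\bar x\in X$ and an inner non-degenerate $\bar y\in S(\bar x)$, and set $\bar z=(\bar x,\bar y)$.

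For the ``if'' direction, assume $\bar z$ is a critical point and $\bar x$ is outer non-degenerate. Outer non-degeneracy (Definition~\ref{def:outernd}) presupposes that $\bar x$ is stationary with the given $\bar y$. Hence Theorem~\ref{the:ndstat} makes $\bar x$ a non-degenerate stationary point with $\bar y$: there are multipliers $\bar\lambda_O,\bar s_O,\bar\lambda_I,\bar s_I$ with $\bar s_O,\bar s_I\ge 0$ for which $D\Phi$ is non-singular. Lemma~\ref{lem:DPhiDPsi}, applied to exactly these multipliers, shows that $\bar z$ is a non-degenerate critical point with $\bar\lambda=(\bar\lambda_O,\bar\lambda_I)$ and $\bar\sigma=(-\bar s_O,\bar s_I)$.

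For the ``only if'' direction, let $\bar z$ be a non-degenerate critical point with multipliers $(\bar\lambda,\bar\sigma)$ and non-singular $D\Psi$. Criticality is then immediate, but outer non-degeneracy additionally requires stationarity of $\bar x$. To use Lemma~\ref{lem:DPhiDPsi} we need the sign pattern $\bar\sigma=(-\bar s_O,\bar s_I)$ with $\bar s_O,\bar s_I\ge0$; this is where the main obstacle lies.

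\begin{itemize}
\item[(i)] Inner part. Inner non-degeneracy gives (ILICQ), so the $y$-rows of $\Psi=0$ together with $\bar\sigma_y\circ\bar y=0$ determine $(\bar\lambda_I,\bar\sigma_y)$ uniquely as the inner KKT multipliers of $\bar y\in S(\bar x)$. Hence $\bar\sigma_y=\bar s_I\ge0$.
\item[(ii)] Outer part. Given $\bar\lambda_I$, (OLICQ)-type uniqueness similarly pins down the outer multipliers. However, nonnegativity of $-\bar\sigma_x$ is exactly the statement that $\bar x$ is stationary, and criticality alone does not supply this.
\end{itemize}

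I would therefore read the corollary, as in Lemma~\ref{lem:statcrit} and Lemma~\ref{lem:DPhiDPsi}, with the multipliers of the stated sign structure, i.e.\ ``critical'' understood as corresponding to a stationary $\bar x$. Under that reading, Lemma~\ref{lem:DPhiDPsi} shows that $D\Phi$ is non-singular, so $\bar x$ is a non-degenerate stationary point. Theorem~\ref{the:ndstat}, using inner non-degeneracy of $\bar y$, then yields that $\bar x$ is outer non-degenerate.

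Finally, I would remark that without this sign reading one can still conclude from the Laplace and Schur complement argument before Theorem~\ref{the:ndstat} that (OLICQ), (OSORC) and strict complementarity $\bar x_i+|\bar\sigma_{x,i}|>0$ hold. This is precisely outer non-degeneracy, apart from the sign condition that stationarity imposes.
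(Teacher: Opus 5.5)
Your proof is correct and follows the paper's route. The paper justifies Corollary~\ref{cor:ndcrit} only by noting that Lemma~\ref{lem:DPhiDPsi} and Theorem~\ref{the:ndstat} entail it, and this tacitly uses the sign-structured multipliers $\bar\sigma=(-\bar s_O,\bar s_I)$ with $\bar s_O,\bar s_I\ge0$ from Lemma~\ref{lem:DPhiDPsi}, which is exactly the reading you adopt.

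Your caveat about the literal ``only if'' direction is a genuine defect of the statement, not of your argument. Non-degeneracy forces LICQ and hence unique multipliers. So take $A_I=0$, $Q=0$, $A_O=B_I=\begin{bmatrix}1 & 1\end{bmatrix}$, $b_O=b_I=1$, $c_x=c_y=(1,0)^\top$. Then $\bar z=((1,0),(1,0))$ is a non-degenerate critical point of $f$ on $\Omega$ with inner non-degenerate $\bar y$, but its unique multiplier has $\bar\sigma_x=(0,1)^\top$, so $\bar x$ (the outer maximizer) is not stationary and therefore not outer non-degenerate.
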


First- and second-order properties of a non-degenerate critical point $\bar z$ of $f$ on $\Omega$ are readily encoded in its linear and quadratic indices, which we introduce next. 
First observe that, by Laplace expansion of its determinant, the Jacobian $D\Psi(\bar z,\bar\lambda,\bar\sigma)$ is non-singular iff 
the absolute strict complementary slackness (ASCS) $\min_{i=1,\ldots,n+m}(|\bar\sigma_i|+\bar z_i)>0$ holds and
\begin{align}\label{eq:C}
     C:=\begin{bmatrix}
         Q & A^\top & \1^\top_{\bar z}\\
        A & 0 & 0\\
        \1_{\bar z} & 0 & 0
    \end{bmatrix}
\end{align}
    is non-singular. Note that a non-negativity constraint on $\bar\sigma$ is not needed for the latter argument, leading to the ASCS in place of the usual strict complementary slackness condition.
    
Full row rank of $\begin{bmatrix}
        A\\\1_{\bar z}
    \end{bmatrix}$ means that the linear independence constraint qualification (LICQ) holds at $\bar z$ in $\Omega$. Furthermore, $T(\bar z,\Omega):=\kernel\begin{bmatrix}
        A\\\1_{\bar z}
    \end{bmatrix}$ is the tangent space to $\Omega$ at $\bar z$. Therefore, by Lemma~\ref{lem:inertia} the matrix $C$ from \eqref{eq:C} is non-singular if and only if the LICQ holds and $Q|_{T(\bar z,\Omega)}$ is non-singular. Altogether, this shows that $\bar z\in\Omega$ is a non-degenerate critical point of $f$ on $\Omega$ iff multipliers $\bar\lambda,\bar\sigma$ exist which satisfy $\Psi(\bar z,\bar\lambda,\bar\sigma)=0$, ASCS, LICQ and non-singularity of $Q|_{T(\bar z,\Omega)}$. 
    
    Following the terminology from \cite{jongen1986critical}, we call the number of negative entries of $\bar\sigma_{{\cal I}_{\bar z}}$ linear index (LI) and the number of negative eigenvalues of $Q|_{T(\bar z,\Omega)}$ quadratic index (QI) of $\bar z$. Standard first- and second-order optimality conditions imply that a non-degenerate critical point is a local minimal point iff LI and QI both vanish, a local maximal point iff LI and QI attain their maximal values $|{\cal I}_{\bar z}|$ and $(n+m)-(p+q+|{\cal I}_{\bar z}|)$, respectively, and otherwise a local saddle point.

    For the statement of the following result recall that outer non-degeneracy of a stationary point $\bar x$ of \eqref{eq:MinMax} in the sense of Definition~\ref{def:outernd} entails that the $(n-(p+|{\cal I}_{\bar x}|),(n-(p+|{\cal I}_{\bar x}|))$-matrix $Q_{11}'|_{T(\bar x,X)}$ is non-singular. Hence, assuming that it possesses $\varrho$ negative eigenvalues is equivalent to its inertia being $(n-p-|{\cal I}_{\bar x}|-\varrho,\varrho,0)$.
   \begin{theorem}\label{the:ndsaddle}
    For $\bar x\in X$ let $\bar y\in S(\bar x)$ be inner non-degenerate, and let $\bar x$ be a stationary and outer non-degenerate point of \eqref{eq:MinMax}, where $Q_{11}'|_{T(\bar x,X)}$ possesses $\varrho$ negative eigenvalues. Then $\bar z=(\bar x,\bar y)$ is a non-degenerate critical point of $f$ on $\Omega$ with LI=$|{\cal I}_{\bar y}|$ and QI=$m-q-|{\cal I}_{\bar y}|+\varrho$.
\end{theorem}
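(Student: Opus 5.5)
Non-degenerate criticality follows at once from Corollary~\ref{cor:ndcrit}: $\bar y$ is inner non-degenerate, and $\bar x$ is stationary and outer non-degenerate, so $\bar z$ is a non-degenerate critical point of $f$ on $\Omega$. By Lemma~\ref{lem:statcrit} its multipliers are $\bar\lambda=(\bar\lambda_O,\bar\lambda_I)$ and $\bar\sigma=(-\bar s_O,\bar s_I)$. It remains to compute the two indices.

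\emph{Linear index.} The active set of $\bar z$ is ${\cal I}_{\bar z}={\cal I}_{\bar x}\cup{\cal I}_{\bar y}$, with the $y$-indices shifted by $n$. On ${\cal I}_{\bar x}$, \eqref{OSCS} gives $[\bar s_O]_i>0$, so $\bar\sigma_i=-[\bar s_O]_i<0$. On ${\cal I}_{\bar y}$, \eqref{ISCS} gives $[\bar s_I]_j>0$, so $\bar\sigma_{n+j}>0$. Hence LI counts exactly the negative entries coming from the $x$-part, i.e.\ LI$=|{\cal I}_{\bar x}|$. The statement, however, says LI$=|{\cal I}_{\bar y}|$. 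I would recheck this against the sign convention for LI, since the count of positive entries of $\bar\sigma_{{\cal I}_{\bar z}}$ is $|{\cal I}_{\bar y}|$. I would fix the convention so that it matches the min/max classification stated just before the theorem, and then give the count accordingly.

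\emph{Quadratic index.} I would compare inertias rather than compute $Q|_{T(\bar z,\Omega)}$ directly. Take the matrix $C$ from \eqref{eq:C} and reorder its rows and columns symmetrically so that it becomes the matrix $K$ from \eqref{eq:K}, up to the sign of $\1_{\bar x}$. This reordering is a congruence, so it preserves inertia. In $C$ the constraint block $[A;\1_{\bar z}]$ has $t=r=p+q+|{\cal I}_{\bar z}|$ rows, and LICQ gives it full rank. Lemma~\ref{lem:inertia} therefore yields
\[
\inertia(K)=\inertia(Q|_{T(\bar z,\Omega)})+(r,r,0).
\]
Second, I would use the Schur complement with respect to $K_{22}$. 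Haynsworth additivity gives $\inertia(K)=\inertia(K_{22})+\inertia(K/K_{22})$.

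I would then evaluate each piece with Lemma~\ref{lem:inertia}. For $K_{22}$, \eqref{ISOSC} makes $-Q_{22}|_{T(\bar y,Y(\bar x))}$ negative definite of size $m-q-|{\cal I}_{\bar y}|$. With $r_I=q+|{\cal I}_{\bar y}|$ from \eqref{ILICQ}, this gives
\[
\inertia(K_{22})=(0,\,m-q-|{\cal I}_{\bar y}|,\,0)+(r_I,r_I,0).
\]
For $K/K_{22}$, the hypothesis on $Q_{11}'|_{T(\bar x,X)}$ and \eqref{OLICQ} with $r_O=p+|{\cal I}_{\bar x}|$ give
\[
\inertia(K/K_{22})=(n-r_O-\varrho,\,\varrho,\,0)+(r_O,r_O,0).
\]
Since $r=r_O+r_I$, subtracting $(r,r,0)$ leaves
\[
\inertia(Q|_{T(\bar z,\Omega)})=\bigl(n-p-|{\cal I}_{\bar x}|-\varrho,\;m-q-|{\cal I}_{\bar y}|+\varrho,\;0\bigr).
\]
This gives QI$=m-q-|{\cal I}_{\bar y}|+\varrho$.

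The main obstacle is bookkeeping. First, the symmetric permutation must genuinely carry $C$ to $K$, with the sign flip on the $\1_{\bar x}$ rows and columns done as a congruence by $\diag(\pm1)$. Second, the LI sign convention must be made consistent, as discussed above. The inertia arithmetic itself is routine once these are settled.
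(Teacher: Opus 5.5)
Your proposal is correct and follows the paper's proof essentially step for step. Non-degenerate criticality comes from Theorem~\ref{the:ndstat} and Lemma~\ref{lem:DPhiDPsi}, packaged as Corollary~\ref{cor:ndcrit}. QI comes from the congruence of $C$ from \eqref{eq:C} with $K$ from \eqref{eq:K}, Lemma~\ref{lem:inertia}, and Haynsworth additivity, with the same inertia arithmetic.

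The LI discrepancy you flag is genuine, and it is a sign slip in the paper itself. The paper's proof asserts that the ${\cal I}_{\bar x}$-entries of $\bar\sigma_{{\cal I}_{\bar z}}$ are positive and the ${\cal I}_{\bar y}$-entries negative, which contradicts $\bar\sigma=(-\bar s_O,\bar s_I)$ together with \eqref{OSCS} and \eqref{ISCS}. The convention of \cite{jongen1986critical} that makes ``local minimum iff LI$=$QI$=0$'' true counts the positive entries of $\bar\sigma_{{\cal I}_{\bar z}}$. That gives LI$=|{\cal I}_{\bar y}|$, as stated and as Corollary~\ref{cor:ndstat2} requires, so you should commit to that reading rather than leave it open.
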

\begin{proof}
Let $\bar\lambda_O$, $\bar s_O$, $\bar\lambda_I$, $\bar s_I$ be multipliers from the stationarity condition of $\bar x$. By Theorem~\ref{the:ndstat} $\bar z=(\bar x,\bar y)$ is a non-degenerate stationary point of \eqref{eq:MinMax}, and by Lemma~\ref{lem:DPhiDPsi} also a non-degenerate critical point with multipliers $\bar\lambda=\begin{bmatrix}
	\bar\lambda_O \\
	\bar\lambda_I
\end{bmatrix}$ and
$\bar\sigma=\begin{bmatrix}
	-\bar s_O \\
	\bar s_I
\end{bmatrix}$.
Furthermore, \eqref{OSCS} and \eqref{ISCS} imply that the first $|{\cal I}_{\bar x}|$ entries of $\bar\sigma_{{\cal I}_{\bar z}}$ are positive, whereas the remaining $|{\cal I}_{\bar y}|$ entries are negative. Therefore the linear index of $\bar z$ is $|{\cal I}_{\bar y}|$.

To determine the quadratic index QI of $\bar z$, we compute the inertia $\inertia(Q|_{T(\bar z,\Omega)})$. Lemma~\ref{lem:inertia} and Sylvester's law of inertia for the congruent matrices $C$ from \eqref{eq:C} and $K$ from \eqref{eq:K} imply
\begin{align*}
    \inertia(Q|_{T(\bar z,\Omega)})&=\inertia(C)-(p+q+|{\cal I}_{\bar z}|,p+q+|{\cal I}_{\bar z}|,0)=\inertia(K)-(p+q+|{\cal I}_{\bar z}|,p+q+|{\cal I}_{\bar z}|,0).
\end{align*}
Furthermore, by the additivity formula for the inertia of Schur complements from 
\cite{haynsworth1968determination} we have
\begin{align*}
    \inertia(K)&=\inertia(K/K_{22})+\inertia(K_{22})=\inertia \begin{bmatrix}
        Q_{11}'& A_O^\top& -\mathds{1}_{\bar x}^\top\\
        A_O & 0 & 0\\
        -\mathds{1}_{\bar x} & 0 & 0
    \end{bmatrix}+\inertia \begin{bmatrix}
        -Q_{22} & B_I^\top & \mathds{1}_{\bar y}^\top\\
        B_I & 0 & 0\\
        \mathds{1}_{\bar y} & 0 & 0
    \end{bmatrix}
\end{align*}
with $Q_{11}'$ from \eqref{eq:D2phi}. Another application of Sylvester's law of inertia and Lemma~\ref{lem:inertia} yield
\begin{align*}
    \inertia(K)=&\inertia(Q_{11}'|_{T(\bar x,X)})+(p+|{\cal I}_{\bar x}|,p+|{\cal I}_{\bar x}|,0)+\inertia(-Q_{22}|_{T(\bar y,Y(\bar x)})+(q+|{\cal I}_{\bar y}|,q+|{\cal I}_{\bar y}|,0).
\end{align*}
The assumption on the number of negative eigenvalues of $Q_{11}'|_{T(\bar x,X)}$ and the non-degeneracy of $\bar y\in S(\bar x)$, respectively, imply
\begin{align*}
    \inertia(Q_{11}'|_{T(\bar x,X)})&=(n-p-|{\cal I}_{\bar x}|-\varrho,\varrho,0),\\
    \inertia(-Q_{22}|_{T(\bar y,Y(\bar x)})&=(0,m-q-|{\cal I}_{\bar y}|,0)
\end{align*}
and thus
\begin{align*}
    \inertia(K)=(n-\varrho,p+|{\cal I}_{\bar x}|+\varrho,0)+(q+|{\cal I}_{\bar y}|,m,0)=(n+q+|{\cal I}_{\bar y}|-\varrho,m+p+|{\cal I}_{\bar x}|+\varrho,0)
    \end{align*}
as well as
\begin{align*}
    \inertia(Q|_{T(\bar z,\Omega)})&=(n+q+|{\cal I}_{\bar y}|-\varrho,m+p+|{\cal I}_{\bar x}|+\varrho,0)-(p+q+|{\cal I}_{\bar z}|,p+q+|{\cal I}_{\bar z}|,0)\\
    &=(n-p-|{\cal I}_{\bar x}|-\varrho,m-q-|{\cal I}_{\bar y}|+\varrho,0).
\end{align*}
Since QI is the second entry of the inertia triple, this shows the assertion.
\end{proof}

\begin{corollary}\label{cor:ndstat2}
Let the assumptions of Theorem~\ref{the:ndsaddle} hold and let $Q_{11}'|_{T(\bar x,X)}$ be positive definite. Then $\bar z$ is a local saddle point of $f$ on $\Omega$.
\end{corollary}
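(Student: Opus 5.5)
By Theorem~\ref{the:ndsaddle}, $\bar z=(\bar x,\bar y)$ is a non-degenerate critical point of $f$ on $\Omega$. Positive definiteness of $Q_{11}'|_{T(\bar x,X)}$ means $\varrho=0$, so the theorem gives
\[
\mathrm{LI}=|{\cal I}_{\bar y}|,\qquad \mathrm{QI}=m-q-|{\cal I}_{\bar y}|.
\]
It was recalled just before Theorem~\ref{the:ndsaddle} that a non-degenerate critical point is a local minimizer iff LI and QI both vanish, and a local maximizer iff LI$=|{\cal I}_{\bar z}|$ and QI$=(n+m)-(p+q+|{\cal I}_{\bar z}|)$; otherwise it is a local saddle point. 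The plan is therefore to show that neither of these extremal index patterns can occur.

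\textbf{Not a local minimizer.} If it were, then LI$=0$ forces ${\cal I}_{\bar y}=\emptyset$, and QI$=0$ then forces $m=q$. This contradicts the standing dimension assumption $q<m$.

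\textbf{Not a local maximizer.} If it were, the LI condition reads $|{\cal I}_{\bar y}|=|{\cal I}_{\bar x}|+|{\cal I}_{\bar y}|$, so ${\cal I}_{\bar x}=\emptyset$. Substituting into the QI condition gives
\[
m-q-|{\cal I}_{\bar y}| = n+m-p-q-|{\cal I}_{\bar y}|,
\]
hence $n=p$. This contradicts $p<n$.

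So $\bar z$ is neither a local minimizer nor a local maximizer, and by Definition~\ref{def:criticalpoint}(b) it is a local saddle point of $f$ on $\Omega$. The only step needing care is the characterization of local minima and maxima through the indices, which holds only at non-degenerate critical points. That non-degeneracy is exactly what Theorem~\ref{the:ndsaddle} supplies, so after that the argument is just the index bookkeeping above.
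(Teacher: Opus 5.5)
Your proof is correct and is essentially the paper's own argument. You set $\varrho=0$, then use the index characterization together with the blanket assumptions $q<m$ and $p<n$ to rule out the local-minimum pattern (which would force ${\cal I}_{\bar y}=\emptyset$ and $m=q$) and the local-maximum pattern (which would force ${\cal I}_{\bar x}=\emptyset$ and $n=p$).
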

\begin{proof}
The positive definiteness of $Q_{11}'|_{T(\bar x,X)}$ is equivalent to $\varrho=0$.
Assume that the non-degenerate critical point $\bar z$ is a local minimal point of $f$ on $\Omega$.
As mentioned before the statement of Theorem~\ref{the:ndsaddle}, this is characterized by the vanishing of both LI and QI. In view of  Theorem~\ref{the:ndsaddle} the latter is equivalent to ${\cal I}_{\bar y}=\emptyset$ and $0=m-q-|{\cal I}_{\bar y}|+\varrho=m-q$. This, however, contradicts the blanket assumption $q<m$. 

Analogously, assume that $\bar z$ is a local maximal point of $f$ on $\Omega$. This is characterized by LI and QI attaining their maximal values $|{\cal I}_{\bar x}|+|{\cal I}_{\bar y}|$ and $n-p-|{\cal I}_{\bar x}|+m-q-|{\cal I}_{\bar y}|$, respectively. By Theorem~\ref{the:ndsaddle} this is equivalent to ${\cal I}_{\bar x}=\emptyset$ and $0=n-p-|{\cal I}_{\bar x}|-\varrho=n-p$. This contradicts the blanket assumption $p<n$. 
Therefore the critical point $\bar z$ is neither a local minimal nor a local maximal point of $f$ on $\Omega$, hence a local saddle point.
\end{proof}
The assumptions of Corollary~\ref{cor:ndstat2} mean that $\bar x$ is a non-degenerate local minimal point of \eqref{eq:MinMax}, that is, the type of outer non-degenerate point $\bar x$ which we wish to find algorithmically. 

\begin{example}\label{ex:running5}
    In the situation of Example~\ref{ex:running} we have seen in Example~\ref{ex:running4} that the assumptions of Corollary~\ref{cor:ndstat2} are satisfied. In particular, \eqref{OSOSC} holds trivially. Hence, $(\bar x,\bar y)$ is a local saddle point of $f$ on $\Omega$.
\end{example}




\subsection{Summary of the relationships.}
In this section, we pictorially summarize the relationships intervening between the stationarity concept presented in Definition \ref{Def:StationarityConcept} and the notions of local optimality and saddle point studied throughout this section; cf. Fig. \ref{fig:implications}.

\begin{figure}[htbp]
\centering
\begin{tikzpicture}[
  box/.style={
    draw,
    rounded corners,
    align=center,
    minimum width=3.2cm,
    minimum height=1.1cm
  },
  note/.style={font=\small},
  implication/.style={
    double,
    -{Implies[length=6pt,width=8pt]},
    line width=0.6pt
  },
  node distance=2.4cm and 2.4cm
]

\node[box] (b2) {$\bar x$ locally optimal for \eqref{eq:OuterProblem}}; 
\node[box, right=of b2] (b3) {$\bar x$ is stationary}; 

\node[box, below=of b2] (b1) {$(\bar x, \bar y, \bar{\lambda}_I, \bar{s}_I)$ locally optimal for \eqref{eq:QPmodel}}; 
\node[box, below=of b3] (b4) {$(\bar x, \bar y)$ is a local saddle point}; 


\draw[implication] (b2) -- (b3);

\draw[implication, bend left=25]
  (b3) to node[above] {(Req2)} (b2);

\draw[implication] (b2) -- (b1);

\draw[implication, bend left=25]
  (b1) to node[left] {(Req1)} (b2);

\draw[implication] 
  (b3) to node[right] {(Req3)} (b4);

\end{tikzpicture}
\caption{
Summary of relationships. 
The local optimality in the right hand side of the implication $\bar x$ \textit{is stationary} $\Longrightarrow$ $\bar x$ \textit{locally optimal for} \eqref{eq:OuterProblem} is, in fact, strict.}
\label{fig:implications}
\end{figure}

In Fig. \ref{fig:implications}, the first requirement (Req1) corresponds to the inner semicontinuity of $S_D$ at the point $(\bar x, \bar y, \bar{\lambda}_I, \bar{s}_I)${from Proposition~\ref{thm:LocalRelationshipTo(QP)}(b)}, while the second requirement (Req2) represents all the assumptions made in{Theorem \ref{the:ndmeaningfulness}}. As for the third requirement (Req3), it collects all the assumptions in{Corollary~\ref{cor:ndstat2}}.

\section{An interior point method.}\label{sec:ipm}
In this section, we introduce an interior point method to compute a solution of the stationarity condition \eqref{eq:KKTjoint}. To this aim, let us consider, 
given a parameter $\mu >0$,  the barrier minmax problem 
\begin{equation}\label{eq:MinMax_b}\tag{\text{P$_\mu$}}
	\min_{x\in \bar{X} } \, \max_{y\in \bar{Y}(x)} f(x,y) -\mu \sum^n_{i=1} \log(x_i) + \mu \sum^m_{j=1} \log(y_j),
\end{equation}
where $f$ is defined in \eqref{eq:f(x,y)} and $\bar{X}$ and $\bar{Y}(x)$ are given by  
\[
\bar{X}:=\left\{x \in \mathbb{R}^n:\;\;  A_Ox = b_O\right\} \; \mbox{ and }\; 
\bar{Y}(x) := \{ y \in \mathbb{R}^m:\;\;  A_Ix+B_Iy = b_I\},
\]
respectively. 
Obviously, problem \eqref{eq:MinMax_b} can be written as the minimization of the function $\varphi_{\mu}(x)$ over the outer feasibility set $\bar{X}$ with $\varphi_\mu(x):=\tilde{\varphi}_{\mu}(x) - \hat{\varphi}_{\mu}(x)$, where
\[
\left\{\begin{array}{lll}
   \tilde{\varphi}_\mu(x) & := & \frac{1}{2}x^\top (Q_{11})x + c^\top_x x  - \mu \sum^n_{i=1} \log(x_i),\\[2ex]
   \hat{\varphi}_\mu(x) & := & \underset{y\in \bar{Y}(x)}{\min} \,\frac{1}{2}y^\top (Q_{22}) y- x^\top Q_{12} y - c^\top_y y - \mu \sum^m_{j=1} \log(y_j).
\end{array}\right.
\]

Now, let $\bar x$ with $\bar{x}_i >0$, $i=1, \ldots, n$, be a local optimal solution of  \eqref{eq:MinMax_b}, in a sense similar to \eqref{def:localOptSol}; i.e., for some neighborhood $U$ of $\bar x$,
\[
\bar x\in \underset{\,\quad x\in \bar{X}\cap U}{\text{\text{argmin}}}~ \varphi_\mu(x).
\]
If we assume that the optimal solution set-valued mapping
\[
S_{\mu}(x) := \underset{y\in \bar{Y}(x)}{\arg\min} \,\frac{1}{2}y^\top (Q_{22}) y- x^\top Q_{12} y - c^\top_y y - \mu \sum^m_{j=1} \log(y_j)
\]
is {inner semicontinuous} at $(\bar x, \bar y)$ with $\bar y\in \text{S}_{\mu} (\bar x)$ and the matrix $B^\top_I$ possesses full column rank, 
then the function $\hat{\varphi}_\mu$ is Lipschitz continuous around $\bar x$ and its Clarke subdifferential at this point  can be estimated as  
\[
     \partial \hat{\varphi}_\mu(\bar x) \subset \left\{-Q_{12}\bar y + A^\top_I \lambda_I:\;\, Q_{22} \bar y -Q_{12}\bar y + B^\top_I \lambda_I - c_y -\left[\begin{array}{c}
     \frac{\mu}{\bar{y}_1}\\
     \vdots\\
     \frac{\mu}{\bar{y}_m}
     \end{array}\right] =0, \; \lambda_I \in \mathbb{R}^q\right\};
\]
 see, e.g., \cite{mordukhovich2012variational}. Therefore, as $\bar x$ is locally optimal for problem \eqref{eq:MinMax_b}, 
\begin{equation}\label{Eq:OptCond_b}
    0\in \partial \varphi_{\mu}(\bar x) + N_{\bar X} (\bar x),
\end{equation}
where  $N_{\bar X}$ stands for the normal cone to $\bar X$ in the sense of convex analysis. Then observing that we have the formulas 
\[
\nabla \tilde{\varphi}_{\mu}(\bar{x})= Q_{11} \bar x + c_x - \left[\begin{array}{c}
     \frac{\mu}{\bar{x}_1}\\
     \vdots\\
     \frac{\mu}{\bar{x}_n}
     \end{array}\right] \;\mbox{ and }\; N_{\bar{X}}(\bar x)= A^\top_O\mathbb{R}^p,
\]
it follows from \eqref{Eq:OptCond_b} and the upper estimate of $\partial \hat{\varphi}_\mu(\bar x)$ given above that there exist Lagrange multipliers $\lambda_O\in \mathbb{R}^p$ and $\lambda_I\in \mathbb{R}^q$ such that we have  the stationary conditions  
\begin{equation}\label{eq:KKT_final}
	\begin{split}
		& Qz + c +A^\top \lambda - \begin{bmatrix}
			\1 & 0 \\
			0 & -\1
		\end{bmatrix} s = 0, \\
		&  Az = b, \\ 
		&  z \circ s = \mu e,  \\
		& z, s > 0,
	\end{split}
\end{equation}
while considering the notation in \eqref{eq:Matrix_Notation} and setting $(s_O)_i:= \frac{\mu}{\bar{x}_i}$ for $i=1, \ldots, n$ and $(s_I)_j:= \frac{\mu}{\bar{y}_j}$ for $j=1, \ldots, m$. A direct comparison of equations \eqref{eq:KKTjoint} and \eqref{eq:KKT_final} reveals that the former differs from the latter solely in the presence of the parameter $\mu$ in the complementarity conditions and the positivity of $z$ and $s$. 

Interpreting the system \eqref{eq:KKT_final} as a perturbation of \eqref{eq:KKTjoint} constitutes the classical foundation for devising interior point methods  (see, e.g., \cite{MR4865731}), and indeed, intuitively, the stationarity system \eqref{eq:KKT_final} approximates \eqref{eq:KKTjoint} more and more closely when $\mu$ converges to $0$. Motivated by the barrier interpretation presented above, we analyze in the subsequent sections how classical IPM solvers can be adapted to solve \eqref{eq:KKTjoint} and establish their theoretical guarantees. 

Note that \eqref{eq:KKTjoint} represents a particular instance of a mixed linear complementarity problem and, more generally, of a constrained equation problem, both of which are amenable to specialized IPM-type solvers (see, e.g., \cite{MR3396730, MR1406748}). However, to the best of our knowledge, adopting such a perspective does not yield substantially improved convergence or complexity and guarantees. Hence, we employ here a standard infeasible IPM framework; see, e.g.,  \cite[Chapter 6]{MR1422257}.

Before presenting the details of the IPM method, let us present a series of notational changes or additions to enable our presentation to be closer to standard IPM notation. In particular, in the following, we will set for any $z \in \mathbb{R}^{n+m} $, $Z:=\diag(z)$ and hence, $z \circ s = ZSe$,{whereas any inequalities involving a vector and a scalar are meant to hold component-wise}. Moreover, aiming at devising IPM--type solvers for the system \eqref{eq:KKTjoint}, we introduce the map
\begin{equation} \label{eq:KKT_for_IPM}
    F({z,\lambda,s}) := \begin{bmatrix}
        Qz + c +A^\top \lambda - \begin{bmatrix}
			 \1 & 0 \\
			0 & -\1
		\end{bmatrix} s  \\
		  Az - b \\ 
		  ZSe   
    \end{bmatrix}  \hbox{ with } z,s \geq 0,
\end{equation}
as a version of the mapping $\Psi(z,\lambda,(-s_O,s_I))$ from \eqref{eq:Psi} in standard IPM notation. 

Multiplying the first block row of the function $F$ by $\begin{bmatrix}
		\1 & 0 \\
		0 & -\1
	\end{bmatrix}$,
we get the function 
	\begin{equation*} 
		\hat{F}(z,\lambda,s) := \begin{bmatrix}
			\begin{bmatrix}
				\1 & 0 \\
				0 & -\1
			\end{bmatrix} (Qz + c +A^\top \lambda) - s  \\
			Az - b \\ 
			ZSe   
		\end{bmatrix}   \hbox{ with } z,s \geq 0,
	\end{equation*}
which can be employed to equivalently write the stationarity system \eqref{eq:KKTjoint}.{We can hence succinctly re-write the stationarity system \eqref{eq:KKTjoint} as} 

	\begin{equation}\label{eq:F_def}
		\hat{F}(z,\lambda,s) :=\begin{bmatrix}
			\hat{Q}z +\hat{c} + \hat{A}^\top \lambda - s  \\
			Az - b \\ 
			ZSe   
		\end{bmatrix}{=0,}  \hbox{ with } z,s \geq 0,
	\end{equation}
  {where the expression $\hat{F}(z,\lambda,s)$ is written in the compact form  using the notation}
\begin{equation*}
    \begin{array}{rllll}
      \hat{Q} &:= & \begin{bmatrix}
				\1 & 0 \\
				0 & -\1
			\end{bmatrix}Q & = & \begin{bmatrix}
				Q_{11} & Q_{12} \\
				-Q_{12}^\top & Q_{22} 
			\end{bmatrix},\\[3ex]
    \hat{c} & := & \begin{bmatrix}
				\1 & 0 \\
				0 & -\1
			\end{bmatrix}c & = & \begin{bmatrix}
				c_x \\ -c_y
			\end{bmatrix},\\[3ex]
    \hat{A^\top} &:=&\begin{bmatrix}
				\1 & 0 \\
				0 & -\1
			\end{bmatrix}A^\top & = &\begin{bmatrix}
				A_O^\top & A_I^\top \\
				0 & -B_I^\top
			\end{bmatrix}.
    \end{array}
\end{equation*}
From now on, we will focus only on $\hat{F}$ {and hence, on the computation of stationary points}. Additionally, in the sequel, we will use the notation 
\[
B(v_0,r):=\left\{ v \; \mid \; \|v-v_0\| <r \right\} \; \mbox{ and } \; \overline{B(v_0,r)}:=\left\{ v \; \mid \; \|v-v_0\| \leq r \right\},
\]
while employing, {when beneficial for readability,} the shorthand $v := (z,\lambda, s )$.

\subsection{On the local existence of the central path.}

Of utmost importance for the development of any kind of IPM is the concept of \textit{central path}. To this aim, for any $\mu>0$, we introduce the perturbation
{of the function $\hat{F}$ in \eqref{eq:F_def}, i.e.,}
\begin{equation} \label{eq:Central_Path_Equation}
    {H(z,\lambda, s, \mu )} := \begin{bmatrix}
        \hat{Q}z + \hat{c} +\hat{A}^\top \lambda - s  \\
		  Az - b \\ 
		  ZSe- \mu e   
    \end{bmatrix} = \hat{F}(z,\lambda, s) - \mu \begin{bmatrix}
        0 \\
        0 \\
       e
    \end{bmatrix},
\end{equation}
{with $H: \mathbb{R}^{N+1} \rightarrow \mathbb{R}^{N}$ with $H \in C^1(\mathbb{R}^{N+1},\mathbb{R}^{N})$, where $N:=2(n+m) + p+q$. Subsequently, we} 
define  the \textit{central path}  by 
\[
\mathcal{C}:=\left\{(z_\mu, \lambda_{\mu}, s_{\mu}) \in  {\mathbb{R}^{n+m}_{>0} \times \mathbb{R}^{p+q} \times \mathbb{R}^{n+m}_{>0} } \; \mid \; {H(z,\lambda, s, \mu )=0} \; \hbox{ for } \mu>0\right\}.
\]

{The introduced notation will allow us to establish the local existence of the central path and other properties of the Newton matrices by means of the Implicit Function Theorem. In this regard, and throughout this section, we will make the crucial assumption:}



\begin{hypothesis} \label{hyp:non_singularity}
  {The point $v^*=(z^*,\lambda^*, s^*)$  satisfies $z^*,s^* \geq 0$, $H(v^*,0)=0$,  and $\partial_{v} H(v^*,0)$ is invertible.}
\end{hypothesis}

The next lemma ensures that Theorem~\ref{the:ndstat} provides a sufficient condition for Assumption~\ref{hyp:non_singularity} in terms of the mild inner and outer non-degeneracy requirements for \eqref{eq:MinMax} from Section~\ref{sec:Stationary and saddle points}.
\begin{lemma}
    Assumption~\ref{hyp:non_singularity} holds at $v^*=(z^*,\lambda^*, s^*)$ with $z^*:=(x^*,y^*)$, $\lambda^*:=(\lambda_O^*,\lambda_I^*)$, and $s^*:=(s_O^*,s_I^*)$ if and only if $x^*$ is a non-degenerate stationary point of \eqref{eq:MinMax} in the sense of Definition~\ref{def:ndstat} with $y^*\in S(x^*)$ and multipliers $(\lambda_O^*,s_O^*,\lambda_I^*,s_I^*)$.
\end{lemma}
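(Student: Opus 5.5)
The plan is to treat the two halves of Assumption~\ref{hyp:non_singularity} separately. The first half is the solvability of $H(v^*,0)=0$ with $z^*,s^*\geq 0$. The second half is the invertibility of $\partial_v H(v^*,0)$. For each half I will relate the relevant object to what Lemma~\ref{lem:statcrit} and Lemma~\ref{lem:DPhiDPsi} already provide for $\Psi$ and $\Phi$.

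\emph{Solvability.} Since $H(v,0)=\hat F(v)$, the condition $H(v^*,0)=0$ with $z^*,s^*\geq0$ is exactly \eqref{eq:F_def}. Now $\hat F$ is obtained from $F$ in \eqref{eq:KKT_for_IPM} by multiplying the first block row with the involution $\diag(\1,-\1)$, which does not change the zero set. Also $F(z,\lambda,s)$ coincides with $\Psi(z,\lambda,\sigma)$ for $\sigma=(-s_O,s_I)$, up to the sign of the block $x\circ(-s_O)$ in the complementarity row. Lemma~\ref{lem:statcrit} then shows that these conditions hold if and only if $x^*$ is stationary with $y^*\in S(x^*)$ and multipliers $(\lambda_O^*,s_O^*,\lambda_I^*,s_I^*)$. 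For the ``if'' direction of this step I recall, as in that lemma, that the inner KKT system already forces $y^*\in S(x^*)$.

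\emph{Invertibility.} I compute
\[
\partial_v H(v^*,0)=\begin{bmatrix}\hat Q & \hat A^\top & -\1\\ A & 0 & 0\\ S^* & 0 & Z^*\end{bmatrix}.
\]
Left multiplication by $\diag(\diag(\1,-\1),\1,\1)$ gives the matrix $\begin{bmatrix} Q & A^\top & -\diag(\1,-\1)\\ A&0&0\\ S^*&0&Z^*\end{bmatrix}$. I then substitute $s=(s_O,s_I)$ by $\sigma=(-s_O,s_I)$, which amounts to right multiplication of the last block column by $\diag(-\1,\1)$. After that substitution the first block row carries $+\1$ in the $\sigma$ column, and the complementarity row becomes $[\diag(\bar s_O\,|\,\bar s_I)\;\;0\;\;\diag(-\bar x\,|\,\bar y)]$. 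Finally I multiply the $x$-part of the complementarity rows by $-1$. This yields $[-\diag(\bar s_O)\;\;0\;\;\diag(\bar x)]$ and $[\diag(\bar s_I)\;\;0\;\;\diag(\bar y)]$, which is precisely $D\Psi(\bar z,\bar\lambda,\bar\sigma)$ as displayed in \eqref{eq:DPsi_explicit}.

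Hence $\partial_vH(v^*,0)=L\,D\Psi(\bar z,\bar\lambda,\bar\sigma)\,R$ with nonsingular (signed diagonal) $L,R$. Invertibility of $\partial_vH(v^*,0)$ is therefore equivalent to non-singularity of $D\Psi$. By Lemma~\ref{lem:DPhiDPsi} this is equivalent to non-singularity of $D\Phi$, i.e. to non-degeneracy in the sense of Definition~\ref{def:ndstat}.

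Combining both steps gives the claimed equivalence. The only delicate point is bookkeeping. I must track the sign flips consistently, including the variable ordering $z=(x,y)$, $s=(s_O,s_I)$ and the row ordering. I must also check that each transformation is multiplication by an invertible signed diagonal or permutation matrix, so that non-singularity is preserved. No inertia argument is needed here, since only non-singularity matters.
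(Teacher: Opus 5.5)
Your proof is correct and follows the paper's argument. Both identify $H(v^*,0)=0$ with $z^*,s^*\ge0$ with stationarity via \eqref{eq:F_def}, write $\partial_vH(v^*,0)$ as a signed-diagonal scaling of $D\Psi(z^*,\lambda^*,(-s_O^*,s_I^*))$, and conclude with Lemma~\ref{lem:DPhiDPsi}; your bookkeeping (flipping the $y$-block of the first block row and the $x$-complementarity row on the left, and the $s_O$ column on the right) is actually the correct factorization, whereas the paper's displayed identity puts the $-\1$ for the $y$-block in the right factor instead of the left, which is harmless for the non-singularity conclusion.
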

\begin{proof}
    By \eqref{eq:F_def} stationarity of $x^*$ with corresponding $y^*\in S(x^*)$ and $(\lambda_O^*,s_O^*,\lambda_I^*,s_I^*)$ is equivalent to $H(v^*,0)=0$, $z^*,s^*\geq0$. Moreover, the matrix 
\begin{equation*}
	\partial_v H(v^*,0)= 		\begin{bmatrix}
				\hat{Q} & \hat{A}^\top & -\1 \\
				A & 0 &  0\\
				S^* & 0 & Z^*
			\end{bmatrix}=\begin{bmatrix}
        Q_{11} & Q_{12} & A_O^\top & A_I^\top & -\1 & 0\\
        -Q_{12}^\top & Q_{22} & 0 & -B_I^\top & 0 & -\1\\
        A_O & 0 & 0 & 0 & 0 & 0\\
        A_I & B_I & 0 & 0 & 0 & 0\\
        \diag(s_O^*) & 0 & 0 & 0 & \diag(x^*) & 0\\
        0 & \diag(s_I^*) & 0 & 0 & 0 & \diag(y^*)
            \end{bmatrix}
\end{equation*}
may be written as
\begin{align*}
\begin{bmatrix}
     \1 & 0 & 0 & 0 & 0 & 0 \\
    0 & \1 & 0 & 0 & 0 & 0 \\
    0 & 0 & \1 & 0 & 0 & 0  \\
     0 & 0 & 0 & \1 & 0 & 0 \\
     0 & 0 & 0 & 0 & -\1 & 0 \\
     0 & 0 & 0 & 0 & 0 & \1 
 \end{bmatrix}
    D\Psi(x^*,y^*,\lambda_O^*,\lambda_I^*,-s_O^*,s_I^*)\begin{bmatrix}
     \1 & 0 & 0 & 0 & 0 & 0 \\
    0 & -\1 & 0 & 0 & 0 & 0 \\
    0 & 0 & \1 & 0 & 0 & 0  \\
     0 & 0 & 0 & \1 & 0 & 0 \\
     0 & 0 & 0 & 0 & -\1 & 0 \\
     0 & 0 & 0 & 0 & 0 & \1 
 \end{bmatrix}
\end{align*}
with $D\Psi$ from \eqref{eq:DPsi_explicit}. Therefore the non-singularity of $\partial_v H(v^*,0)$ means that $z^\star\geq0$ is a non-degenerate critical point in the sense of Definition~\ref{def:ndcriticalpoint} with $s^*\geq0$, and Lemma~\ref{lem:DPhiDPsi} shows the assertion.
\end{proof}


Given the form of $\hat{F}$ and Assumption \ref{hyp:non_singularity}, we can state the following lemma, propaedeutic for the specific version of the Implicit Function Theorem that we will consider (see Theorem \ref{Th:IFT}), but also for developments in Subsection \ref{sec:coupled_local_convergence}.

\begin{lemma} \label{lem:uniform_boundedness}
There exist $\delta_1, \, L, \, K > 0$ such that for all $v, v' \in B(v^*, \delta_1)$,
\[
\|D\hat{F}(v) - D\hat{F}(v')\| \leq L\|v - v'\| \; \mbox{ and } \; \|D\hat{F}(v)^{-1}\| \leq K.
\]
\end{lemma}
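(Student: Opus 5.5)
The plan rests on two observations: $D\hat F$ is affine in $v$, and inversion is continuous near an invertible matrix.

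For the Lipschitz bound, write $v=(z,\lambda,s)$. From \eqref{eq:F_def},
\begin{equation*}
D\hat F(v)=\begin{bmatrix}\hat Q & \hat A^\top & -\1\\ A & 0 & 0\\ S & 0 & Z\end{bmatrix}.
\end{equation*}
The first two block rows do not depend on $v$, and the third depends linearly on $(z,s)$ through $\diag(s)$ and $\diag(z)$. Hence
\begin{equation*}
D\hat F(v)-D\hat F(v')=\begin{bmatrix}0&0&0\\0&0&0\\ \diag(s-s')&0&\diag(z-z')\end{bmatrix}.
\end{equation*}
Its spectral norm is bounded by $\|\diag(s-s')\|+\|\diag(z-z')\|\le 2\|v-v'\|$. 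So $L=2$ works globally, independently of $\delta_1$.

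For the inverse bound, note that $H(v,\mu)=\hat F(v)-\mu(0,0,e)$, so $\partial_v H(v^*,0)=D\hat F(v^*)$. This matrix is invertible by Assumption~\ref{hyp:non_singularity}; set $\beta:=\|D\hat F(v^*)^{-1}\|$. Write
\begin{equation*}
D\hat F(v)=D\hat F(v^*)\bigl(\1+D\hat F(v^*)^{-1}(D\hat F(v)-D\hat F(v^*))\bigr).
\end{equation*}
The perturbation satisfies $\|D\hat F(v^*)^{-1}(D\hat F(v)-D\hat F(v^*))\|\le \beta L\|v-v^*\|<\beta L\delta_1$. Choose $\delta_1:=1/(2\beta L)$. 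The Banach lemma (Neumann series) then shows that the bracketed matrix is invertible with inverse of norm at most $2$. Consequently $\|D\hat F(v)^{-1}\|\le 2\beta=:K$ for all $v\in B(v^*,\delta_1)$.

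The argument is routine. The only points needing care are identifying $\partial_v H(v^*,0)$ with $D\hat F(v^*)$, and the perturbation-lemma step, which is where the choice of $\delta_1$ is determined.
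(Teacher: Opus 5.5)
Your proof is correct and matches the argument the paper relies on: the paper writes out no proof and justifies the lemma only by ``the form of $\hat{F}$ and Assumption~\ref{hyp:non_singularity}''. That is exactly your combination of the affine dependence of $D\hat{F}$ on $(z,s)$, which enters only through the block row $[S\ 0\ Z]$, with a Banach-lemma perturbation of the invertible matrix $D\hat{F}(v^*)=\partial_v H(v^*,0)$. As a minor aside, in the spectral norm $\|D\hat{F}(v)-D\hat{F}(v')\|=\max_i\bigl((s_i-s_i')^2+(z_i-z_i')^2\bigr)^{1/2}\le\|v-v'\|$, so even $L=1$ works, although your $L=2$ is sufficient.
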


The specific version of the Implicit Function Theorem here considered, see Theorem \ref{Th:IFT}, will be instrumental in proving the local existence of the central path. For the sake of completeness, we report the proof of such a theorem in Section \ref{sec:Implicit Function Theorem}.

\begin{theorem} \label{Th:IFT}
  Consider 
  \[
  V_{\delta}:= \left\{ (v,\mu)\left|\;  \|v-v^*\| \leq \delta, \; |\mu| \leq \delta\right.\right\}
  \] 
  and select $\delta>0$ such that 
\begin{equation*}
    \sup_{(v,\mu) \in V_{\delta}} \|I - \partial_v H(v^*,0)^{-1}\partial_v H(v,\mu) \| \leq \frac{1}{2}
\end{equation*}
and  $\partial_v H(v,\mu)^{-1}$ exists for all    $(v,\mu) \in V_{\delta}$. Furthermore, let 
\[
B_{\delta} := \max\left\{ 1,\;\,  \sup_{(v,\mu) \in V_{\delta}}  \| \partial_{\mu} H(v,\mu)\| \right\} \,\hbox{ and }\, M:=\max\left\{1,\;\, \| \partial_v H(v^*,0)^{-1}\|\right\}.
\]
Define $\bar{\mu}:=(2MB_{\delta})^{-1}\delta$ and $\Lambda_{\bar{\mu}}:=\{ \mu\in \mathbb{R}|\; |\mu| < \bar{\mu}\}$.  Then there exists a function $g \in C^{1}(\Lambda_{\bar{\mu}}, \mathbb{R}^N)$ such that all the solutions of the equation $H(v,\mu)=0$ in the set 
$
\overline{B({v^*},\delta)}\times \Lambda_{\bar{\mu}}
$
are given by $(g(\mu), \mu)$. In addition, there exists $C>0$ such that 
\begin{equation*}
    \|g(\mu_1)-g(\mu_2)\| \leq C |\mu_1-\mu_2| \hbox{ for all } \mu_1, \mu_2 \in \Lambda_{\bar{\mu}}
\end{equation*}
and  
\begin{equation*}
    \frac{d}{d\mu}g(\mu)=-(\partial_v H(g(\mu),\mu))^{-1}\partial_\mu H(g(\mu),\mu).
\end{equation*}
\end{theorem}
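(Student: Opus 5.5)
I will prove the statement with the classical quantitative Implicit Function Theorem argument: a simplified Newton (chord) iteration with the frozen matrix $J^*:=\partial_v H(v^*,0)$ is a contraction. For fixed $\mu\in\Lambda_{\bar\mu}$ I define
\[
T_\mu(v):=v-(J^*)^{-1}H(v,\mu),
\]
so that the fixed points of $T_\mu$ are exactly the zeros of $H(\cdot,\mu)$. The plan is to show that $T_\mu$ maps $\overline{B(v^*,\delta)}$ into itself and is a contraction with constant $\tfrac12$. Banach's fixed point theorem then gives a unique zero $g(\mu)$ in that ball. This establishes both existence and the claim that all solutions in $\overline{B(v^*,\delta)}\times\Lambda_{\bar\mu}$ are of the form $(g(\mu),\mu)$.

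\emph{Contraction.} Note first that $|\mu|<\bar\mu\le\delta$, because $M,B_\delta\ge1$; hence $(v,\mu)\in V_\delta$ for $v$ in the closed ball. For $v,v'$ in the ball, the mean value theorem in integral form, applied along the segment (which stays in the convex ball), gives
\[
T_\mu(v)-T_\mu(v')=\int_0^1\bigl(I-(J^*)^{-1}\partial_vH(v'+t(v-v'),\mu)\bigr)\,dt\,(v-v').
\]
The hypothesis on $V_\delta$ bounds the integrand by $\tfrac12$, so $\|T_\mu(v)-T_\mu(v')\|\le\tfrac12\|v-v'\|$.

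\emph{Self-mapping.} Split
\[
\|T_\mu(v)-v^*\|\le\|T_\mu(v)-T_\mu(v^*)\|+\|T_\mu(v^*)-v^*\|.
\]
The first term is at most $\delta/2$ by the contraction estimate. For the second term, $H(v^*,0)=0$ by Assumption~\ref{hyp:non_singularity}, so
\[
\|T_\mu(v^*)-v^*\|=\|(J^*)^{-1}(H(v^*,\mu)-H(v^*,0))\|\le M B_\delta|\mu|<MB_\delta\bar\mu=\tfrac{\delta}{2},
\]
where I use that $\partial_\mu H$ is bounded by $B_\delta$ on $V_\delta$ and apply the mean value theorem in $\mu$. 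Together these give $\|T_\mu(v)-v^*\|\le\delta$, so $T_\mu$ maps the closed ball into itself.

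\emph{Lipschitz bound.} For $\mu_1,\mu_2\in\Lambda_{\bar\mu}$ I write
\[
g(\mu_1)-g(\mu_2)=T_{\mu_1}(g(\mu_1))-T_{\mu_1}(g(\mu_2))+T_{\mu_1}(g(\mu_2))-T_{\mu_2}(g(\mu_2)).
\]
The first difference is at most $\tfrac12\|g(\mu_1)-g(\mu_2)\|$. The second equals $-(J^*)^{-1}\bigl(H(g(\mu_2),\mu_1)-H(g(\mu_2),\mu_2)\bigr)$ and is at most $MB_\delta|\mu_1-\mu_2|$. Rearranging gives the claim with $C=2MB_\delta$; in particular $g$ is continuous.

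\emph{Differentiability and the derivative formula.} Since $\partial_vH(v,\mu)$ is invertible on $V_\delta$ by hypothesis, I would expand $H(g(\mu+h),\mu+h)-H(g(\mu),\mu)=0$ to first order, using that $H\in C^1$ and that the Lipschitz bound makes $g(\mu+h)-g(\mu)=O(h)$. This yields
\[
\partial_vH(g(\mu),\mu)\,\bigl(g(\mu+h)-g(\mu)\bigr)+\partial_\mu H(g(\mu),\mu)\,h=o(h).
\]
Multiplying by the inverse gives differentiability with $g'(\mu)=-(\partial_vH)^{-1}\partial_\mu H$ evaluated at $(g(\mu),\mu)$. The right-hand side is continuous in $\mu$, because $g$ is continuous, $H$ is $C^1$ and inversion is continuous, so $g\in C^1$.

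I expect the main obstacle to be this last step: controlling the $o(h)$ remainder uniformly enough to pass from the Lipschitz estimate to genuine differentiability. I would handle it by applying the integral mean value theorem once more and using the continuity of $\partial_vH$ and $\partial_\mu H$ at $(g(\mu),\mu)$.
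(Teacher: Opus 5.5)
Your proposal is correct and follows essentially the same route as the paper's proof. In both, the frozen-Jacobian map $v\mapsto v-\partial_vH(v^*,0)^{-1}H(v,\mu)$ is shown to be a self-map and a $\tfrac12$-contraction of $\overline{B(v^*,\delta)}$, the Lipschitz bound comes from the same splitting with $C=2MB_\delta$, and differentiability follows from a first-order expansion of $H$ combined with that Lipschitz bound. The step you flag as the main obstacle is harmless: Fr\'echet differentiability of $H$ at the single point $(g(\mu),\mu)$, together with $\|g(\mu+h)-g(\mu)\|\le C|h|$, already makes the remainder $o(|h|)$, so no uniformity argument is needed.
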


This remark presents the details about the local existence of the central path.
\begin{remark} \label{rema:Local_Positivity}
    Using the differentiability of $g$ at $\mu=0$, we have that
\begin{equation} \label{eq:central_path_from_sol}
    g(\mu)=v^* \underbrace{- (\partial_v H(v^*,0))^{-1}\begin{bmatrix}
        0 \\
        0 \\
       - e
    \end{bmatrix}}_{\frac{d}{d\mu}g(0) =: v'(0)} \mu +o(\mu).
\end{equation}
That is, 
\begin{equation*}
    (\partial_v H(v^*,0)) v'(0) = \begin{bmatrix}
        0 \\
        0 \\
        e
    \end{bmatrix} \;\mbox{ with }\; \partial_v H(v^*,0)= 		\begin{bmatrix}
				\hat{Q} & \hat{A}^\top & -\1 \\
				A & 0 &  0\\
				S^* & 0 & Z^*
			\end{bmatrix}.
\end{equation*}
Hence, looking at the last set of block equations $[s^*]_i [z'(0)]_i+ [z^*]_i [s'(0)]_i =1 $ for $i=1,\dots,m+n$ and recalling that $Z^*S^*e=0$, we have that it must hold

\begin{equation} \label{eq:a}
    a:= \min_i \{ [s^*]_i + [z^*]_i \} >0;
\end{equation}
i.e., $s^*_i$ and $z^*_i$ can not be zero at the same time. Hence, that $s^*_i= 0$ and $z^*_i =0$ imply $s'(0)_i>0$ and  $z'(0)_i>0$, respectively. On the other hand, when $s^*_i> 0$ or  $z^*_i >0 $ we do not have information about the signs of $s'(0)_i$ or $z'(0)_i$. This does not represent a problem, because we can still conclude that for $\mu$ small enough, $z(\mu),s(\mu)>0$. It is important to note that not only the quantity $a$ defined \eqref{eq:a} is a measure of the radius where $D\hat{F}(v)$ is invertible, it must be indeed $a<\delta_1$, see \cite[Section 4]{MR2411396}, where $\delta_1$ is from Lemma \ref{lem:uniform_boundedness}, but also, as it appears from \eqref{eq:central_path_from_sol}, a measure of \textit{how far} one can extend, in general, the central path from the solution $v^*$.
\end{remark}
Similarly to \cite{MR2411396}, with reference to the quantities defined in Lemma \ref{lem:uniform_boundedness} and Theorem \ref{Th:IFT}, let 
\begin{equation} \label{eq:delta_star}
\delta^* := \min\left\{\delta, {\delta_1}, \frac{1}{2KL}\right\} \quad \text{and} \quad \mu^* := \min\left\{\bar{\mu}, \frac{\delta^*}{C}\left(1 + \sqrt{1 + 2KL\delta^*}\right)^{-1}\right\}.
\end{equation}

\subsection{An infeasible interior point method.}

  {In Algorithm \ref{alg:ipm}, we present the main computational scheme considered in this work. To this aim, we need the following notation:}
	\begin{align}
		r_b &:= Az - b, \label{eq:6.1a} \\
		r_c &:= \hat{Q}z + \hat{A}^\top\lambda + \hat{c}- s . \label{eq:6.1b},
	\end{align}
and
	\begin{equation*} \label{eq:6.7}
		\begin{split}
			(z(\alpha), \lambda(\alpha), s(\alpha)) & := (z, \lambda,s)+\alpha(\Delta z, \Delta \lambda, \Delta s),\\
			\mu(\alpha) & := \frac{1}{n+m}z(\alpha)^\top s(\alpha).
		\end{split}
	\end{equation*}
    
	When{$r_b$ and $r_c$} are computed at the point $(z, \lambda, s) = (z^k, \lambda^k, s^k)$, we denote them by $r_b^k$ and $r_c^k$. The algorithm here considered, see \cite[Chapter 6]{MR1422257}, works with the central path neighborhood $  {\mathcal{N}_{-\infty}(\underline{\gamma},\beta)}$ defined by
	\begin{equation} \label{eq:6.2}
		\begin{array}{rll}
	  {\mathcal{N}_{-\infty}(\underline{\gamma},\beta)} & = & \left\{ (z,\lambda,s) \in  \mathbb{R}^{n+m}_{>0} \times \mathbb{R}^{p+q} \times \mathbb{R}^{n+m}_{>0} \right. \\[1.5ex]
	& & \left. \mid \|(r_b,r_c)\| \leq \left[\frac{\|(r_b^0,r_c^0)\|}{\mu_0}\right]\beta\mu,  \quad z_i s_i \geq \underline{\gamma}\mu,\; i=1,2,\ldots,n\right\},
\end{array}		
	\end{equation}
	where{$\underline{\gamma} \in (0,1)$} and $\beta > 1$ are given parameters and $(r_b^0, r_c^0)$ and $\mu_0$ are evaluated at the starting point $(z^0, \lambda^0, s^0)$. Note that we must have
	\begin{equation*} \label{eq:6.3}
		\|r_b^0\| \leq \beta\mu_0 \; \mbox{ and }\; \|r_c^0\| \leq \beta\mu_0
	\end{equation*}
	to ensure that the initial point $(z^0, \lambda^0, s^0)$ belongs to $  {\mathcal{N}_{-\infty}(\underline{\gamma},\beta)}$.

    \begin{algorithm}
\caption{Interior Point Algorithm (IPM)}
\label{alg:ipm}
\begin{algorithmic}[1]
\REQUIRE $\gamma \in (0,1)$, $\beta \geq 1$,  $\sigma_{\min}< \sigma_{\max} <1$.
\STATE Choose $(z^0, \lambda^0, s^0)$ with $(z^0, s^0) > 0$.
\FOR{$k = 0, 1, 2, \ldots$}
    \STATE Choose $\sigma_k \in [\sigma_{\min}, \sigma_{\max}]$ and solve
    \begin{equation} \label{eq:6.4}
        \begin{bmatrix}
            \hat{Q} & \hat{A}^\top & -\1 \\
            A & 0 & 0 \\
            S^k & 0 & Z^k
        \end{bmatrix}
        \begin{bmatrix}
            \Delta z \\
            \Delta \lambda \\
            \Delta s
        \end{bmatrix}
        =
        \begin{bmatrix}
            -r_c^k \\
            -r_b^k \\
            -Z^k S^k e + \sigma_k \mu_k e
        \end{bmatrix}.
    \end{equation}
    \STATE Choose $\alpha_k$ as the largest value of $\alpha \in [0,1]$ such that
    \begin{align}
        (z^k(\alpha), \lambda^k(\alpha), s^k(\alpha)) \in{\mathcal{N}_{-\infty}(\underline{\gamma},\beta)}
        \label{eq:6.5c}
    \end{align}
    and the Armijo condition holds:
    \begin{equation} \label{eq:6.6}
        \mu(\alpha) \leq (1 - (1-\sigma_{max}) \alpha) \mu_k.
    \end{equation}
    \STATE Set $(z^{k+1}, \lambda^{k+1}, s^{k+1}) = (z^k(\alpha_k), \lambda^k(\alpha_k), s^k(\alpha_k))$.
\ENDFOR
\end{algorithmic}
\end{algorithm}

\begin{lemma} \label{lemma:reductions_in_IPM}
  It holds that:
	\begin{equation*}
    \begin{array}{rll}
      {A}z(\alpha)-b & = & (1-\alpha)r_b, \\[1.5ex]
	\hat{Q}z(\alpha) + \hat{A}^\top\lambda(\alpha)-s(\alpha)+\hat{c} & = & (1-\alpha)r_c, \\[1.5ex]
    (z+\alpha \Delta z)^\top(s+\alpha \Delta s) & = & z^\top s(1+\alpha(\sigma-1))+\alpha^2 (\Delta z)^\top\Delta s, \\[1.5ex]
  s_i \Delta z_i  + z_i \Delta s_i & = & \sigma \frac{z^\top s}{n+m} - z_is_i.
    \end{array}
	\end{equation*}

\end{lemma}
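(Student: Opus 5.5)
The plan is to verify the four identities directly from the definitions $z(\alpha)=z+\alpha\Delta z$, $\lambda(\alpha)=\lambda+\alpha\Delta\lambda$, $s(\alpha)=s+\alpha\Delta s$, using only that the direction $(\Delta z,\Delta\lambda,\Delta s)$ solves the Newton system \eqref{eq:6.4} at the current iterate, with $\mu=z^\top s/(n+m)$. All four rest on the linearity of the first two block rows of $\hat F$ in $(z,\lambda,s)$, so no assumption such as Assumption~\ref{hyp:non_singularity} is needed.

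For the first identity, expand $Az(\alpha)-b=(Az-b)+\alpha A\Delta z$. The second block row of \eqref{eq:6.4} gives $A\Delta z=-r_b$, hence $Az(\alpha)-b=(1-\alpha)r_b$. The second identity works the same way. We have
\[
\hat Q z(\alpha)+\hat A^\top\lambda(\alpha)-s(\alpha)+\hat c = r_c+\alpha\bigl(\hat Q\Delta z+\hat A^\top\Delta\lambda-\Delta s\bigr),
\]
and the first block row of \eqref{eq:6.4} states that the bracket equals $-r_c$.

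Next I establish the fourth identity, since the third depends on it. Reading the third block row of \eqref{eq:6.4} componentwise gives $s_i\Delta z_i+z_i\Delta s_i=-z_is_i+\sigma\mu$, and substituting $\mu=z^\top s/(n+m)$ yields the claim. For the third identity, expand
\[
(z+\alpha\Delta z)^\top(s+\alpha\Delta s)=z^\top s+\alpha\,(s^\top\Delta z+z^\top\Delta s)+\alpha^2\Delta z^\top\Delta s.
\]
Summing the fourth identity over $i=1,\dots,n+m$ gives $s^\top\Delta z+z^\top\Delta s=\sigma z^\top s-z^\top s=(\sigma-1)z^\top s$. Inserting this gives $z^\top s\,(1+\alpha(\sigma-1))+\alpha^2(\Delta z)^\top\Delta s$.

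There is no genuine obstacle here. The only point requiring care is bookkeeping: the residuals $r_b$ and $r_c$ are those of \eqref{eq:6.1a}–\eqref{eq:6.1b} at the current iterate, and the sign conventions of $\hat Q$, $\hat A^\top$ and $-\1$ must match exactly between $r_c$ and the first row of the Newton matrix. With the notation fixed as in \eqref{eq:F_def}, they do.
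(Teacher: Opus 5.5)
Your proposal is correct and follows the paper's proof. The residual identities come from linearity together with the first two block rows of \eqref{eq:6.4}, and the complementarity identities come from the third block row, summed over $i$ to obtain $s^\top\Delta z+z^\top\Delta s=(\sigma-1)z^\top s$; the only difference is that you write out the $r_b$ case, which the paper calls analogous.
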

\begin{proof}
We prove the statement only for $r_c$, the proof for $r_b$ is analogous. We have,
    \begin{equation*}
			\begin{split}
				& \hat{Q}(z+\alpha \Delta z) + \hat{A}^\top(\lambda +\alpha \Delta \lambda)-(s+\alpha \Delta s)+\hat{c} \\
				&=  \hat{Q}z + \hat{A}^\top\lambda -s+\hat{c}+ \alpha(\hat{Q} \Delta z + \hat{A}^\top\Delta - \Delta s) = (1-\alpha)r_c .
			\end{split}
		\end{equation*}
The last part of the statement follows by direct inspection of the last block equation in \eqref{eq:6.4}. In particular, 
$
		 s^\top\Delta z+z^\top\Delta s
		 =- z^\top s + \sigma (n+m) \mu 
		 =  (\sigma-1)z^\top s.
$
\end{proof}

\begin{remark}[  {Convergence and computational complexity per iteration}]
\label{rem:ipm-convergence-complexity}
For all points in the neighborhood $  {\mathcal{N}_{-\infty}(\underline{\gamma},\beta)}$, the infeasibility is  bounded by some multiple of the duality measure $\mu$. By forcing $\mu$ to zero monotonically -- see Algorithm \ref{alg:ipm} Line \ref{eq:6.6} -- and restricting all iterates $(z^k, \lambda^k, s^k)$ to the neighborhood $  {\mathcal{N}_{-\infty}(\underline{\gamma},\beta)}$, the interior point algorithm presented in Algorithm \ref{alg:ipm} ensures primal-dual convergence, i.e., that $r_b^k \to 0$ and $r_c^k \to 0$ as $k \to \infty$. The other condition in \eqref{eq:6.2} (i.e.,  $z_i s_i \geq \underline{\gamma}\mu$) keeps the pairwise products $z_i s_i$ roughly in balance and prevents the Newton-like search directions from being distorted by components of $(z, s)$ that approach zero prematurely.

From a computational point of view, the dominant cost of each iteration is the
solution of the primal--dual Newton system \eqref{eq:6.4}. For a direct linear
algebra implementation, this cost is governed by the factorization of the
corresponding saddle-point/KKT matrix. In the worst case, if the system is
treated as a dense linear system of dimension equal to the total number of
primal-dual variables, the cost is cubic in that dimension. In
practice, however, interior point solvers exploit the symmetry, sparsity, and
block structure of the Newton system, often reducing the computational burden
substantially. Thus, the per-iteration cost should be understood as the cost of
forming and solving the structured Newton system, rather than as the cost of the
line-search procedure. And indeed, the Armijo condition in \eqref{eq:6.6} should not be interpreted as introducing
an additional optimization loop comparable to the Newton step itself. Its role is
only to select an admissible stepsize along the already computed Newton
direction. This requires inexpensive scalar operations, such as checking
positivity, membership in the neighborhood, and the decrease of the duality
measure. These operations are negligible compared with the solution of
\eqref{eq:6.4}. Moreover, in many practical implementations of primal--dual
interior point methods, including the implementation considered here, see Section \ref{sec:Numerical experiments}, the
Armijo backtracking procedure is not explicitly used. Instead, the stepsize is
chosen by a fraction-to-the-boundary rule, which takes a fixed fraction of the
largest step preserving positivity of the primal-dual variables. This
choice retains the main purpose of the line-search safeguard while avoiding an
inner backtracking loop.
\end{remark}

\subsection{Convergence analysis: the case $A_I=0$.} \label{sec:convergence_decoupled}
In this section, we conduct the convergence and complexity analysis of Algorithm \ref{alg:ipm} for the particular case $A_I=0$ (cf. Remark~\ref{rem:AI=0}). While this case does not fully capture the generality of the original minmax problem \eqref{eq:MinMax}, it remains an important version of such problem for several reasons. Firstly, it provides valuable insights as a motivational example. Secondly, it establishes that stationary points of {\textit{minmax problems with decoupled constraints}}   -- that is, problems of the form \eqref{eq:MinMax} where $Y(x) \equiv Y$ -- can be computed in polynomial time. This result is further corroborated by the equivalent formulation \eqref{eq:QPmodel} of problem \eqref{eq:MinMax}, which reduces to a convex programming problem when $A_I=0$, and was also preliminarily observed in \cite[Section 2]{MR4654111} in the strongly convex-concave case. The analysis carried out in this section will generalize such an observation without requiring strong convexity-concavity. It is important to note, moreover, that such an extension is particularly relevant as it postulates the existence of polynomial algorithms for the resolution of problems of a rich class of problems
 
\begin{align*} 
\min_{x\in X}  \; & \max_{{ 0 \leq y \leq p }} \;\; y^\top x
\end{align*}
(with $X$ given in \eqref{eq:XandY}), 
which can be used, for example, to model applications where the objective function of a linear program is subject to uncertainty.  {Finally, we observe that the polynomial solvability of this kind of problems is well understood by the robust optimization community, see \cite{MR2546839}, but usually overlooked by the min-max one.}
 
\begin{lemma} \label{lme:crucial_lemma}
	If  $A_I=0$, then $\bar{z}^\top\bar{s} \geq 0$ for any point $(\bar{z},\bar{\lambda},\bar{s})$ such that
	\begin{equation*} \label{eq:homogneus}
		A\bar{z}=0 \; \hbox{ and }\; \hat{Q}\bar{z} +\hat{A}^\top \bar{\lambda} - \bar{s}=0.
	\end{equation*}
\end{lemma}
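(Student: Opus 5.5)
Since $\bar s=\hat{Q}\bar z+\hat{A}^\top\bar\lambda$, the plan is to substitute this expression into $\bar z^\top\bar s$ and expand blockwise, writing $\bar z=(\bar x,\bar y)$ and $\bar\lambda=(\bar\lambda_O,\bar\lambda_I)$. This gives
\[
\bar z^\top\bar s=\bar z^\top\hat{Q}\bar z+\bar z^\top\hat{A}^\top\bar\lambda .
\]
For the quadratic term we use the explicit form of $\hat Q$:
\[
\bar z^\top\hat Q\bar z=\bar x^\top Q_{11}\bar x+\bar x^\top Q_{12}\bar y-\bar y^\top Q_{12}^\top\bar x+\bar y^\top Q_{22}\bar y=\bar x^\top Q_{11}\bar x+\bar y^\top Q_{22}\bar y .
\]
The bilinear coupling terms cancel exactly because of the sign flip built into $\hat Q$; this skew structure is the key observation. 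Both remaining terms are nonnegative, since $Q_{11}$ and $Q_{22}$ are positive semidefinite.

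For the multiplier term, the formula for $\hat A^\top$ gives
\[
\bar z^\top\hat A^\top\bar\lambda=\bar x^\top A_O^\top\bar\lambda_O+\bar x^\top A_I^\top\bar\lambda_I-\bar y^\top B_I^\top\bar\lambda_I=(A_O\bar x)^\top\bar\lambda_O+(A_I\bar x-B_I\bar y)^\top\bar\lambda_I .
\]
This is where the hypothesis $A_I=0$ is needed. The condition $A\bar z=0$ reads $A_O\bar x=0$ and $A_I\bar x+B_I\bar y=0$, so with $A_I=0$ it yields $B_I\bar y=0$. Both pieces of the multiplier term therefore vanish.

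Altogether $\bar z^\top\bar s=\bar x^\top Q_{11}\bar x+\bar y^\top Q_{22}\bar y\ge0$.

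The only real obstacle is the multiplier term: in general it equals $2\bar x^\top A_I^\top\bar\lambda_I$, which can have either sign. This is exactly why the coupled case fails, and why the argument must use $A_I=0$ at that point.
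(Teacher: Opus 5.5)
Your proof is correct and follows the paper's argument. You substitute $\bar s$, then kill the multiplier term using $A_I=0$ (the paper phrases this as $A\bar z=0\Rightarrow\hat A\bar z=0$), and use $\bar z^\top\hat Q\bar z=\tfrac12\bar z^\top(\hat Q+\hat Q^\top)\bar z=\bar x^\top Q_{11}\bar x+\bar y^\top Q_{22}\bar y\ge 0$, which is exactly your blockwise cancellation of the coupling terms; your remark that the multiplier term equals $2\bar x^\top A_I^\top\bar\lambda_I$ in the coupled case is also correct.
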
 
\begin{proof}
	We have
	$
		\bar{z}^\top\bar{s}= \bar{z}^\top\hat{Q}\bar{z}  +\bar{z}^\top\hat{A}^\top\bar{\lambda}. 
	$
Since $A_I=0$, equality $A\bar{z} =0$ implies $\hat{A}z=0$; hence, $\bar{z}^\top\hat{A}^\top\bar{\lambda} =0$. The result follows by observing that 
\[
\bar{z}^\top\hat{Q}\bar{z}  = \frac{1}{2}\bar{z}^\top(\hat{Q}+\hat{Q}^\top)\bar{z} \geq 0.
\]
\end{proof}

It is important to note that Lemma \ref{lme:crucial_lemma} also guarantees that for every $X,S>0$ the IPM matrices 
\[
\begin{bmatrix}
\hat{Q} & \hat{A}^\top & -\1 \\
A & 0 &  0\\
S & 0 & Z
\end{bmatrix}
\]
are invertible. The proof of such a statement can be easily obtained by observing that Lemma \ref{lme:crucial_lemma} implies that the matrix 
$\begin{bmatrix}
\hat{Q} & \hat{A}^\top & -\1 \\
A & 0 &  0
\end{bmatrix}$
has the \textit{the mixed-$P_0$ property} by an application of \cite[Lemma 2]{MR1403309}. Moreover, {using Assumption \ref{hyp:non_singularity} for $(z^*,\lambda^*, s^*)$, Lemma \ref{lme:crucial_lemma}, the \textit{mixed-$P_0$} property in the interior,} it can easily be shown that the central path, i.e., the solutions of \eqref{eq:Central_Path_Equation},  exists for every $\mu \in [ 0, +\infty{)}$. 

{To end this section, in the following Theorem \ref{th:polynomial_conv}, we} summarizes convergence and complexity guarantees for Algorithm  \ref{alg:ipm} {in the case $A_I=0$}. 

\begin{theorem} \label{th:polynomial_conv}
	Suppose that $\varepsilon>0$ is given and let $(z^0,\lambda^0,s^0) = (\zeta e, 0, \zeta e)$ be such that  $\zeta$ satisfies
    \[ 
    \|(z^*,s^*)  \| \leq \zeta \; \hbox{ and } \;  \zeta^2\leq \frac{C}{\varepsilon^\kappa}
    \]
	for suitable positive constants $C, \kappa$. There exists $K \in \mathbb{N}$ with 
	\begin{equation*}
		K = O((n+m)^2|\log(\varepsilon)| )
	\end{equation*}
	such that the iterates $\{(z^k,\lambda^k,s^k)\}_{k \in \mathbb{N}}$, generated by Algorithm \ref{alg:ipm}, {for solving the system \eqref{eq:F_def} when $A_I=0$}, satisfy
	\begin{equation*}
	 \mu^k \leq \varepsilon \hbox{ for all } k \geq K.
	\end{equation*}
\end{theorem}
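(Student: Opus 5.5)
This result follows by transplanting the standard polynomial analysis of the infeasible path-following method for monotone LCPs/convex QPs (Wright, Chapter 6, Theorem 6.2 and its lemmas) to the system \eqref{eq:F_def}. The only place where convexity is used in that analysis is the inequality $\Delta z^\top \Delta s\ge 0$ for homogeneous directions, together with $\bar z^\top\bar s\ge0$ for differences of points with equal residual structure. In our setting Lemma \ref{lme:crucial_lemma} supplies exactly this monotonicity whenever $A_I=0$. Throughout, $N:=n+m$ replaces the usual dimension $n$.

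The steps are as follows.

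\emph{Residual decay.} By Lemma \ref{lemma:reductions_in_IPM}, the residuals satisfy $(r_b^k,r_c^k)=\nu_k(r_b^0,r_c^0)$ with $\nu_k=\prod_{j<k}(1-\alpha_j)$. Together with the Armijo condition \eqref{eq:6.6} and the neighborhood condition, this gives $\nu_k\le \beta\mu_k/\mu_0$.

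\emph{Bounding the iterates.} Consider the point $\nu_k(z^0,\lambda^0,s^0)+(1-\nu_k)(z^*,\lambda^*,s^*)-(z^k,\lambda^k,s^k)$. It satisfies the homogeneous equations of Lemma \ref{lme:crucial_lemma}, because the equations are affine and the residuals scale identically. Hence the inner product of its $z$- and $s$-parts is nonnegative. Expanding, and using $z^*\circ s^*=0$, $z^k,s^k>0$, $z^0=s^0=\zeta e$ and $\|(z^*,s^*)\|\le\zeta$, yields
\[
\nu_k\zeta\,(\|z^k\|_1+\|s^k\|_1)\le C_1 N\mu_k
\]
with $C_1$ depending only on $\beta$. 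This is the analogue of Wright's Lemma 6.3.

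\emph{Bounding the Newton step.} Scale the Newton system \eqref{eq:6.4} by $D=(Z^k)^{1/2}(S^k)^{-1/2}$ and split the direction into a part correcting infeasibility, proportional to $\nu_k$ times the direction toward the initial/optimal pair, plus a part solving the homogeneous system with the centrality right-hand side. Lemma \ref{lme:crucial_lemma} gives $\Delta z^\top\Delta s\ge0$ for the homogeneous part. The norm identity $\|D^{-1}\Delta z\|^2+\|D\Delta s\|^2\le\|(ZS)^{-1/2}(-ZSe+\sigma\mu e)\|^2+\ldots$ then bounds $\|D^{-1}\Delta z\|,\|D\Delta s\|\le C_2 N\mu_k^{1/2}$. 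This uses the bound from the previous step and $z_is_i\ge\underline\gamma\mu_k$, with $C_2$ depending on $\underline\gamma,\beta,\sigma_{\min}$. Consequently $|\Delta z_i\Delta s_i|$ and $|\Delta z^\top\Delta s|$ are at most $C_2^2N^2\mu_k$.

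\emph{Step length.} Insert these bounds into the identities of Lemma \ref{lemma:reductions_in_IPM}. The neighborhood inequalities $z_i(\alpha)s_i(\alpha)\ge\underline\gamma\mu(\alpha)$, the residual bound, and \eqref{eq:6.6} then all hold for $\alpha\in[0,\bar\alpha]$ with $\bar\alpha=\bar\delta/N^2$ and $\bar\delta$ independent of $N$. Hence
\[
\mu_{k+1}\le(1-\bar\delta(1-\sigma_{\max})/N^2)\mu_k .
\]

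\emph{Counting iterations.} The contraction gives $\mu_k\le\varepsilon$ once $k\ge N^2\bar\delta^{-1}(1-\sigma_{\max})^{-1}\log(\mu_0/\varepsilon)$. Since $\mu_0=\zeta^2\le C\varepsilon^{-\kappa}$, we have $\log(\mu_0/\varepsilon)=O(|\log\varepsilon|)$, which gives $K=O((n+m)^2|\log\varepsilon|)$.

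The main obstacle is the third step. One must check that Lemma \ref{lme:crucial_lemma} applies to the non-symmetric $\hat Q$ and to the modified $\hat A^\top\ne A^\top$, so that the cross term $\bar z^\top\hat A^\top\bar\lambda$ vanishes. It does, since $\hat A\bar z=0$ follows from $A\bar z=0$ when $A_I=0$. The scaled-norm estimate must then be redone without symmetry of the Newton matrix; the point is to use only the inner-product inequality, never a symmetric factorization. Invertibility of the Newton matrices along the iterates is supplied by the mixed-$P_0$ remark following Lemma \ref{lme:crucial_lemma}.
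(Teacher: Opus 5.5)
Your proposal is correct and follows essentially the same route as the paper's proof: transplant Wright's Theorem~6.2, use Lemma~\ref{lme:crucial_lemma} to turn the LP orthogonality relations behind the iterate bound (E1)--(E2) and the scaled Newton-step bound (N1)--(N2) into inequalities of the right sign, and rerun the step-length lemma to obtain $\bar\alpha=\bar\delta/(n+m)^2$. One caveat, which the paper shares: because \eqref{eq:6.6} uses $1-\sigma_{\max}$ rather than Wright's constant $0.01$, it reduces to $\alpha\,(\Delta z)^\top\Delta s\le (n+m)(\sigma_{\max}-\sigma_k)\mu_k$. This fails for every $\alpha>0$ when $\sigma_k=\sigma_{\max}$ and $(\Delta z)^\top\Delta s>0$, so your claim that it holds on $[0,\bar\alpha]$ requires $\sigma_k\le\sigma_{\max}-c$ for some fixed $c>0$, and you should state this explicitly.
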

An outline of the proof is reported in Section \ref{sec:appendix_AI=0}.
\begin{remark} It is important to note that by definition of $  {\mathcal{N}_{-\infty}(\underline{\gamma},\beta)}$, see  \eqref{eq:6.2}, if $\mu^k \leq \varepsilon$, then   $\|(r_c, r_b)\| \in O(\varepsilon)$, i.e., $(z,\lambda,s)$ is an $\varepsilon$-approximate solution of the stationary conditions \eqref{eq:KKT_for_IPM}.
	\end{remark}


\subsection{Convergence analysis: the case $A_I \neq 0$.} \label{sec:coupled_local_convergence}
In this section, we  analyze the local convergence of Algorithm \eqref{alg:ipm} when $A_I\neq 0 $. We emphasize that, as outlined in Remark \ref{rem:AI=0}, problem \eqref{eq:MinMax} is nonconvex in this case. Hence, polynomial complexity can not be expected in this more general case. To prove such a local convergence result, we will use a slightly different neighborhood of the infeasible central path w.r.t. the one used in the previous section, i.e., we consider

\begin{equation}\label{eqn:neighborhood}
	\begin{split}
	& \mathcal{N}(\bar {\gamma},\underline{\gamma},\gamma_p,\gamma_d):=  \left\{(z,\lambda,s) \in  {\mathbb{R}^{n+m}_{>0} \times \mathbb{R}^{p+q} \times \mathbb{R}^{n+m}_{>0} }\;\right| \\[1.5ex]
	&{\bar{\gamma} \mu  \geq z_is_i \geq \underline{\gamma} \mu}\; \hbox{ for } \; i=1,\dots,n+m, \;  \left.z^\top s \geq \gamma_p \|r_b\|, \;\; z^\top s \geq  \gamma_d\|r_c\|\right\}, 
	\end{split}
\end{equation}
where $\bar{\gamma} > 1 >\underline{\gamma}> 0$ and $(\gamma_p,\gamma_d)>0$.
Therefore, condition \eqref{eq:6.5c} in Algorithm \ref{alg:ipm} has to be replaced with
\begin{align}
        (z^k(\alpha), \lambda^k(\alpha), s^k(\alpha)) \in \mathcal{N}(\bar {\gamma},\underline{\gamma},\gamma_p,\gamma_d).
        \label{eq:6.5c_Bis}
    \end{align}

Before continuing, we would like to remind the reader that the constants $\delta^*$ and $\mu^*$ used in the remainder of this section are defined in \eqref{eq:delta_star}{and ensure, thanks to Theorem \ref{Th:IFT}, that the Newton system remains  in a suitable neighborhood of a solution of \ref{eq:F_def}.} The {main idea} of the proof is based on the observation that when an initial point is chosen from 
\[
N^*:=B(v^*,\delta^*)\cap \mathcal{N}(\bar {\gamma},\underline{\gamma},\gamma_p,\gamma_d),
\]
all the other iterates of Algorithm \ref{alg:ipm} remain {in this set}. The convergence will essentially follow by observing that in $N^*$, the IPM matrices and the right-hand side in \eqref{eq:6.4} are uniformly bounded {, see proof of Corollary \ref{rem:uniform_boundedness_rhs},  and by the definition $\mathcal{N}(\bar {\gamma},\underline{\gamma},\gamma_p,\gamma_d)$, see proof Theorem \ref{th:convergence}. } All the{missing} proofs of the results presented in this section can be found in Section \ref{sec:appendix_AI_neq0}.

To establish the local convergence of Algorithm \ref{alg:ipm}, we need the following preliminary results.

\begin{lemma}[see Lemma 2 in \cite{MR2411396}]\label{lem:original-lemma3}
For all $z \in B(v^*, \delta^*)$ and for all $\mu^+ \in [0, \mu^*)$, the full-Newton iterate defined by
\begin{equation}\label{eq:newton-iterate}
v^+ = v - D\hat{F}(v)^{-1}(\hat{F}(v) - \mu^+ \tilde{e})
\end{equation}
  {is well defined and satisfies}
\begin{equation*}
\label{eq:bound-central-path}
\|v^+ - g(\mu^+)\| \leq \frac{KL}{2}\|v - g(\mu^+)\|^2
\end{equation*}
and
\begin{equation}
\label{eq:bound-solution}
\|v^+ - v^*\| \leq \frac{1}{2}\|v - v^*\| + \frac{\mu^+}{\mu^*}\frac{\delta^*}{2}.
\end{equation}
In particular, $v^+ \in B(v^*, \delta^*)$.
\end{lemma}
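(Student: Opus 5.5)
The argument is the classical Newton analysis around the central path, in the spirit of \cite{MR2411396}. It rests on Lemma~\ref{lem:uniform_boundedness} (Lipschitz continuity of $D\hat F$ with constant $L$ and $\|D\hat F(v)^{-1}\|\le K$ on $B(v^*,\delta_1)$) and Theorem~\ref{Th:IFT} (existence of $g$ with $g(0)=v^*$ and $\|g(\mu_1)-g(\mu_2)\|\le C|\mu_1-\mu_2|$ on $\Lambda_{\bar\mu}$). Throughout, $\tilde e:=(0,0,e)$, so that $H(v,\mu)=\hat F(v)-\mu\tilde e$ and $\partial_v H(v,\mu)=D\hat F(v)$.

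\emph{Well-definedness.} Since $\delta^*\le\delta_1$, every $v\in B(v^*,\delta^*)$ has $D\hat F(v)$ invertible, so $v^+$ is well defined. Since $\mu^*\le\bar\mu$, the point $g(\mu^+)$ exists and satisfies $\hat F(g(\mu^+))=\mu^+\tilde e$.

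\emph{Quadratic bound.} Write $w:=g(\mu^+)$ and subtract:
\[
v^+-w=D\hat F(v)^{-1}\bigl[D\hat F(v)(v-w)-(\hat F(v)-\hat F(w))\bigr].
\]
Expand $\hat F(v)-\hat F(w)=\int_0^1 D\hat F(w+t(v-w))(v-w)\,dt$. Then the Lipschitz bound gives a bracket of norm at most $\tfrac L2\|v-w\|^2$, and the inverse bound gives the factor $K$. This needs the segment $[w,v]$ to lie in $B(v^*,\delta_1)$. It does, provided $\|w-v^*\|=\|g(\mu^+)-g(0)\|\le C\mu^+<C\mu^*\le\delta^*$, which follows from the definition of $\mu^*$, and the ball is convex.

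\emph{Bound to $v^*$.} Use the triangle inequality: $\|v^+-v^*\|\le\|v^+-w\|+\|w-v^*\|$. Put $r:=\|v-v^*\|<\delta^*$ and $\rho:=C\mu^+$. Then
\[
\|v^+-v^*\|\le \tfrac{KL}{2}(r+\rho)^2+\rho .
\]
The goal is to bound this by $\tfrac r2+\tfrac{\mu^+}{\mu^*}\tfrac{\delta^*}2$. The only delicate part is this algebra, which is where the specific constant $\mu^*=\min\{\bar\mu,\frac{\delta^*}{C}(1+\sqrt{1+2KL\delta^*})^{-1}\}$ is used.

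I would expand $\tfrac{KL}{2}(r+\rho)^2\le \tfrac{KL}{2}r^2+KLr\rho+\tfrac{KL}{2}\rho^2$. The first term satisfies $\tfrac{KL}2 r^2\le \tfrac{KL\delta^*}{2}r\le\tfrac r4$, because $KL\delta^*\le\tfrac12$.

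Next, write $\rho=\theta\,\frac{\delta^*}{1+\sqrt{1+2KL\delta^*}}$ with $\theta:=\mu^+\cdot\frac{C(1+\sqrt{1+2KL\delta^*})}{\delta^*}\le\mu^+/\mu^*<1$. The remaining terms $KLr\rho+\tfrac{KL}2\rho^2+\rho$ are each at most linear in $\theta$, since $\theta<1$ implies $\theta^2\le\theta$. The number $(1+\sqrt{1+2KL\delta^*})^{-1}$ is precisely the positive root structure of the quadratic $\tfrac{KL}{2}\rho^2+\rho-\tfrac{\delta^*}{2}$. So I expect these terms to collapse into $\tfrac r4+\theta\,\tfrac{\delta^*}{2}$, or at least into something absorbable in the same way, which yields \eqref{eq:bound-solution}.

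If the constants do not close exactly as hoped, I would instead first show $\|v^+-v^*\|<\delta^*$ directly, mirroring \cite[Lemma 2]{MR2411396}, and then derive the weighted form.

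\emph{Conclusion.} With \eqref{eq:bound-solution} in hand, $r<\delta^*$ and $\mu^+<\mu^*$ give $\|v^+-v^*\|<\tfrac{\delta^*}2+\tfrac{\delta^*}2=\delta^*$, so $v^+\in B(v^*,\delta^*)$.
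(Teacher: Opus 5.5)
Your reduction is sound. It is the standard argument behind \cite[Lemma 2]{MR2411396}, which the paper only cites without proof. The pieces you have right are: well-definedness from $\delta^*\le\delta_1$; the quadratic estimate via the integral form of the mean value theorem, using $\|g(\mu^+)-v^*\|\le C\mu^+<\delta^*$; and the reduction to $\|v^+-v^*\|\le\frac{KL}{2}(r+\rho)^2+\rho$ with $r=\|v-v^*\|$, $\rho=C\mu^+$.

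The gap is that \eqref{eq:bound-solution}, the only non-routine part of the lemma, is never derived. You only say you ``expect'' the terms to collapse. Your fallback (first prove $\|v^+-v^*\|<\delta^*$, then ``derive the weighted form'') cannot work, because membership in the ball yields neither the contraction factor $\frac{1}{2}$ nor the weight $\mu^+/\mu^*$.

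Your hint is also off. $\rho^*:=\delta^*/(1+\sqrt{1+2KL\delta^*})$ is the positive root of $KL\rho^2+\rho-\frac{\delta^*}{2}$, not of $\frac{KL}{2}\rho^2+\rho-\frac{\delta^*}{2}$; the latter has root $\delta^*/(1+\sqrt{1+KL\delta^*})$. That factor $2$ is exactly where the cross term $KLr\rho$ has to be paid for, and you never say how you bound it. The natural $r$-free bound $KLr\rho\le KL\delta^*\rho\le\rho/2$ does not close. It leaves $\frac{3}{2}\rho+\frac{KL}{2}\rho^2$, which for $r\to0$ and $\mu^+\to\mu^*=\delta^*/(C(1+\sqrt{1+2KL\delta^*}))$ exceeds the available $\frac{\delta^*}{2}$ by $\frac{\rho^*}{2}(1-KL\rho^*)>0$. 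The cross term must be charged to the $r$-budget, not the $\mu^+$-budget.

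The missing step is short:
\begin{itemize}
\item Since $1+\sqrt{1+2KL\delta^*}\ge 2$ and $KL\delta^*\le\frac{1}{2}$, you get $KL\rho\le KL\rho^*\le\frac{1}{4}$.
\item Hence $\frac{KL}{2}r^2+KLr\rho\le\frac{r}{4}+\frac{r}{4}$.
\item Write $\rho=\theta\rho^*$ with $\theta\le\mu^+/\mu^*<1$. Then $\frac{KL}{2}\rho^2+\rho\le\theta\bigl(KL(\rho^*)^2+\rho^*\bigr)=\theta\frac{\delta^*}{2}$.
\end{itemize}
Equivalently, as in the cited reference, use $(r+\rho)^2\le 2r^2+2\rho^2$, then $KLr^2\le KL\delta^* r\le\frac{r}{2}$ and $KL\rho^2+\rho\le\theta\frac{\delta^*}{2}$. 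With this inserted, your argument is complete, including the final inclusion $v^+\in B(v^*,\delta^*)$.
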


\begin{lemma}[Damped Newton step]
\label{lem:damped-lemma3}
For all $v \in B(v^*, \delta^*)$ and all $\mu^+ \in [0, \mu^*)$, let $v^+$ be the Newton iterate given by \eqref{eq:newton-iterate}. For $\alpha \in [0,1]$, define the damped iterate
\begin{equation*}
v_\alpha = v + \alpha(v^+ - v).
\end{equation*}
Then the following bounds hold:
\begin{equation*}
\label{eq:damped-central-path}
\|v_\alpha - g(\mu^+)\| \leq (1-\alpha)\|v - g(\mu^+)\| + \alpha\frac{KL}{2}\|v - g(\mu^+)\|^2
\end{equation*}
and
\begin{equation}
\label{eq:damped-solution}
\|v_\alpha - v^*\| \leq \left(1 - \frac{\alpha}{2}\right)\|v - v^*\| + \alpha\frac{\mu^+}{\mu^*}\frac{\delta^*}{2}.
\end{equation}
In particular, $v_\alpha \in B(v^*, \delta^*)$.
\end{lemma}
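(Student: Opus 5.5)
The damped iterate $v_\alpha$ is a convex combination of $v$ and the full Newton iterate $v^+$, namely $v_\alpha=(1-\alpha)v+\alpha v^+$. Lemma~\ref{lem:original-lemma3} already controls $v^+$ with respect to both reference points, $g(\mu^+)$ and $v^*$. So I would obtain both bounds from the triangle inequality applied to this convex combination. Since $v\in B(v^*,\delta^*)$ and $\mu^+\in[0,\mu^*)$, the hypotheses of Lemma~\ref{lem:original-lemma3} hold. Hence $v^+$ is well defined, because $D\hat F(v)$ is invertible on $B(v^*,\delta_1)\supseteq B(v^*,\delta^*)$ by Lemma~\ref{lem:uniform_boundedness}. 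Consequently $v_\alpha$ is well defined for every $\alpha\in[0,1]$.

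\emph{Central-path bound.} I would write
\[
v_\alpha-g(\mu^+)=(1-\alpha)\bigl(v-g(\mu^+)\bigr)+\alpha\bigl(v^+-g(\mu^+)\bigr).
\]
Taking norms, using $1-\alpha\ge0$ and $\alpha\ge0$, and inserting the quadratic estimate $\|v^+-g(\mu^+)\|\le \frac{KL}{2}\|v-g(\mu^+)\|^2$ from Lemma~\ref{lem:original-lemma3} gives the first inequality directly.

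\emph{Solution bound.} Similarly,
\[
v_\alpha-v^*=(1-\alpha)(v-v^*)+\alpha(v^+-v^*).
\]
By \eqref{eq:bound-solution} this yields
\[
\|v_\alpha-v^*\|\le(1-\alpha)\|v-v^*\|+\alpha\Bigl(\tfrac12\|v-v^*\|+\tfrac{\mu^+}{\mu^*}\tfrac{\delta^*}{2}\Bigr)=\Bigl(1-\tfrac{\alpha}{2}\Bigr)\|v-v^*\|+\alpha\tfrac{\mu^+}{\mu^*}\tfrac{\delta^*}{2},
\]
which is \eqref{eq:damped-solution}.

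\emph{Membership in the ball.} From $\|v-v^*\|<\delta^*$ and $\mu^+<\mu^*$, the right-hand side of \eqref{eq:damped-solution} is strictly less than $(1-\alpha/2)\delta^*+\alpha\delta^*/2=\delta^*$ whenever $\alpha>0$. For $\alpha=0$ we simply have $v_0=v\in B(v^*,\delta^*)$. Alternatively, one can note that $B(v^*,\delta^*)$ is convex and contains both $v$ and $v^+$, the latter by Lemma~\ref{lem:original-lemma3}.

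There is no real obstacle here. The only point requiring care is to confirm that the preconditions of Lemma~\ref{lem:original-lemma3} hold: $v$ lies in the ball of radius $\delta^*\le\delta_1$, and $\mu^+<\mu^*\le\bar\mu$, so $g(\mu^+)$ is defined by Theorem~\ref{Th:IFT}. I would also keep the strict inequalities intact so that the conclusion is stated for the open ball.
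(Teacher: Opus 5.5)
Your proposal is correct and follows the paper's proof. The paper likewise writes $v_\alpha=(1-\alpha)v+\alpha v^+$, applies the triangle inequality relative to $g(\mu^+)$ and to $v^*$, and inserts the two estimates of Lemma~\ref{lem:original-lemma3}. For membership in $B(v^*,\delta^*)$, your direct bound (or the convexity of the ball) is actually slightly cleaner than the paper's remark that the right-hand side of \eqref{eq:damped-solution} is maximized at $\alpha=1$. That remark need not hold, since this affine function of $\alpha$ attains its maximum at $\alpha=0$ when, e.g., $\mu^+=0$ and $v\neq v^*$. Both endpoint values are below $\delta^*$, however, so the paper's conclusion stands.
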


\begin{theorem} \label{th:IPM_well_definitness}
Suppose that $(z^k,\lambda^k,s^k) \in  \mathcal{N}(\bar {\gamma},\underline{\gamma},\gamma_p,\gamma_d)$ is a point satisfying  $(z^k)^\top s^k>0$. Moreover, define 
\begin{equation*}
\begin{array}{rll}
\alpha_p^{*,k} & := & \sup \left\{\alpha \in{[0,1]}\;|\quad  z^k(\alpha) \geq 0 \right\},\\[1.5ex]
\alpha_d^{*,k} & := & \sup \left\{\alpha \in{[0,1]}\;|\quad  {s}^k(\alpha) \geq 0 \right\},\\[1.5ex]
\alpha^{*,k} & := & \min \left\{ \alpha_p^{*,k},\;\;\alpha_d^{*,k}\right\}.
\end{array}
\end{equation*}
If  $(z(\alpha^{*,k}), \,\lambda(\alpha^{*,k}), \,s(\alpha^{*,k}))$ is not a solution of problem \eqref{eq:F_def}, then there exists a scalar $\hat{\alpha}^k$ with $0<\hat{\alpha}^k <\alpha^{*,k}$ such that \eqref{eq:6.5c_Bis} and \eqref{eq:6.6} hold for all $\alpha \in [0,\hat{\alpha}^k]$.   
\end{theorem}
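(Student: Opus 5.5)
The approach is the standard well-definedness argument for infeasible primal-dual IPMs (cf.\ \cite[Chapter 6]{MR1422257}). The plan is to expand every defining inequality of $\mathcal{N}(\bar\gamma,\underline\gamma,\gamma_p,\gamma_d)$ and the Armijo condition \eqref{eq:6.6} along the ray $\alpha\mapsto v^k+\alpha\Delta v^k$, using the identities of Lemma~\ref{lemma:reductions_in_IPM}. Each condition then becomes a polynomial inequality of degree at most two in $\alpha$. We show that each such inequality holds strictly at $\alpha=0^+$ with a first-order margin proportional to $\mu_k>0$. Continuity then yields a common interval $[0,\hat\alpha^k]$.

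First we verify the linear parts. Write $\mu_k=(z^k)^\top s^k/(n+m)>0$. By Lemma~\ref{lemma:reductions_in_IPM},
\[
z(\alpha)^\top s(\alpha)=(n+m)\mu_k(1-\alpha(1-\sigma_k))+\alpha^2\Delta z^\top\Delta s .
\]
Hence $\mu(\alpha)\le(1-(1-\sigma_{\max})\alpha)\mu_k$ holds whenever
\[
\alpha^2|\Delta z^\top\Delta s|\le \alpha(\sigma_{\max}-\sigma_k)(n+m)\mu_k .
\]
If $\sigma_k<\sigma_{\max}$, this holds for small $\alpha$. If $\sigma_k=\sigma_{\max}$, the slack vanishes, so in that case one should use the safeguarded form with $\sigma_{\max}$ replaced by a slightly larger constant; I would note this and, if needed, assume $\sigma_k\le\sigma_{\max}$ with $\Delta z^\top \Delta s$ absorbed as usual, accepting $O(\alpha^2)$ terms. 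For the infeasibility conditions,
\[
\|r_b(\alpha)\|=(1-\alpha)\|r_b^k\|\le(1-\alpha)(z^k)^\top s^k/\gamma_p .
\]
It therefore suffices that $z(\alpha)^\top s(\alpha)\ge(1-\alpha)(z^k)^\top s^k$. This holds for small $\alpha$ because the first-order coefficient of $z(\alpha)^\top s(\alpha)$ is $-(1-\sigma_k)(n+m)\mu_k$, which exceeds $-(n+m)\mu_k$ by $\sigma_k(n+m)\mu_k\ge\sigma_{\min}(n+m)\mu_k>0$. The same argument applies to $r_c$ with $\gamma_d$.

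Next we treat the centrality bounds. Using the last identity of Lemma~\ref{lemma:reductions_in_IPM},
\[
z_i(\alpha)s_i(\alpha)=(1-\alpha)z_i^ks_i^k+\alpha\sigma_k\mu_k+\alpha^2\Delta z_i\Delta s_i .
\]
Suppose the Armijo condition gives $\mu(\alpha)\le\mu_k$, and use the reverse estimate
\[
\mu(\alpha)\ge(1-\alpha)\mu_k-\alpha^2|\Delta z^\top\Delta s|/(n+m).
\]
For the lower bound we need $z_i(\alpha)s_i(\alpha)-\underline\gamma\mu(\alpha)\ge0$. The left side is at least
\[
(1-\alpha)(z_i^ks_i^k-\underline\gamma\mu_k)+\alpha\sigma_k\mu_k(1-\underline\gamma)-O(\alpha^2).
\]
This is nonnegative for small $\alpha$, since the first term is $\ge0$ by membership of $v^k$ in the neighbourhood and $\sigma_{\min}(1-\underline\gamma)\mu_k>0$. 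Symmetrically, for the upper bound we need $\bar\gamma\mu(\alpha)-z_i(\alpha)s_i(\alpha)\ge0$. Here the first-order margin is $\alpha\mu_k(\bar\gamma-\sigma_k)\ge\alpha\mu_k(\bar\gamma-1)>0$, up to terms of order $\alpha\sigma_k$; the correct expansion gives exactly $(1-\alpha)(\bar\gamma\mu_k-z_i^ks_i^k)+\alpha\sigma_k\mu_k(\bar\gamma-1)+O(\alpha^2)$. Finally, positivity $z(\alpha),s(\alpha)>0$ follows from $z_i(\alpha)s_i(\alpha)\ge\underline\gamma\mu(\alpha)>0$ together with $\alpha<\alpha^{*,k}$. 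That bound is where $\alpha^{*,k}$ enters: on $[0,\alpha^{*,k})$ the components cannot cross zero without the product vanishing first.

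Each of the finitely many inequalities has the form $\alpha(c_1-c_2\alpha)\ge0$ with $c_1>0$, so it holds on $[0,c_1/c_2]$. Taking $\hat\alpha^k$ as the minimum of these thresholds and of $\tfrac12\alpha^{*,k}$ gives the claim. The hypothesis that $v(\alpha^{*,k})$ is not a solution guarantees $\alpha^{*,k}>0$ and excludes the degenerate situation in which $\mu(\alpha)$ hits zero along the segment. The main obstacle is the Armijo step when $\sigma_k=\sigma_{\max}$, where the first-order margin vanishes; there I would try to exploit the sign of $\Delta z^\top\Delta s$ or else slightly relax the constant.
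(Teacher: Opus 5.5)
Your proposal is correct. For the individual conditions it is the paper's argument. Your expansions of $z_i(\alpha)s_i(\alpha)-\underline\gamma\mu(\alpha)$, $\bar\gamma\mu(\alpha)-z_i(\alpha)s_i(\alpha)$, the Armijo gap and $z(\alpha)^\top s(\alpha)-(1-\alpha)z^\top s$ are exactly \eqref{eq:ineq_1}--\eqref{eq:ineq_5}. Each has a positive first-order coefficient, and $\hat\alpha^k$ is taken as a minimum of the resulting thresholds. One small point on the lower centrality bound: use the exact expansion of $\mu(\alpha)$ from Lemma~\ref{lemma:reductions_in_IPM}, as your final formula does. The estimate $\mu(\alpha)\le\mu_k$ you mention just before it would only give the first-order coefficient $(\sigma_k-\underline\gamma)\mu_k$, whose sign is unknown.

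The real difference is the endgame. The paper sets $\hat\alpha^k$ to the minimum of the thresholds and $1$ only, and proves $\alpha^{*,k}>\hat\alpha^k$ by contradiction. Otherwise some product $z_{\bar\ell}(\alpha^{*,k})s_{\bar\ell}(\alpha^{*,k})$ vanishes, so $f_{\bar\ell}(\alpha^{*,k})\ge0$ forces $z(\alpha^{*,k})^\top s(\alpha^{*,k})=0$. Then $g_p,g_d\ge0$ force zero residuals, so $v(\alpha^{*,k})$ would solve \eqref{eq:F_def}. This is the only place the hypothesis is used.

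You instead cap $\hat\alpha^k$ by $\tfrac12\alpha^{*,k}$. That makes $\hat\alpha^k<\alpha^{*,k}$ automatic and, with $z_i(\alpha)s_i(\alpha)\ge\underline\gamma\mu(\alpha)>0$, gives strict positivity. This is valid and shows the non-solution hypothesis is unnecessary for the statement as written. However, $\alpha^{*,k}>0$ follows from $z^k,s^k>0$, not from that hypothesis as you claim.

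What the paper's version buys is an admissible interval that depends only on the threshold constants, not on $\alpha^{*,k}$. Claim~2 in the proof of Theorem~\ref{th:convergence} needs exactly this for the uniform bound $\alpha^k\ge\alpha^*$. With your cap you would also need a uniform lower bound on $\alpha^{*,k}$. You can recover the stronger form without the hypothesis by dropping the cap and using your own continuity remark. For $\alpha<1$ below all thresholds, $\mu(\alpha)\ge(1-\alpha)\mu_k>0$, so the products stay positive and no component of $z(\alpha)$ or $s(\alpha)$ can reach zero.

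Your concern about $\sigma_k=\sigma_{\max}$ is justified, and it is a flaw of the statement rather than of your argument. The paper silently assumes $\sigma_k<\sigma_{\max}$ when it asserts $z^\top s(\sigma_{\max}-\sigma)>0$ after \eqref{eq:ineq_3}. If $\sigma_k=\sigma_{\max}$ and $\Delta z^\top\Delta s>0$, then $\mu(\alpha)-(1-(1-\sigma_{\max})\alpha)\mu_k=\alpha^2\Delta z^\top\Delta s/(n+m)>0$ for every $\alpha>0$, so \eqref{eq:6.6} fails on every interval. Neither absorbing the $\alpha^2$ term nor a sign argument can repair this (in the monotone case $\Delta z^\top\Delta s\ge0$ is the bad sign). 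State the restriction $\sigma_k<\sigma_{\max}$ explicitly, or use an Armijo constant strictly smaller than $1-\sigma_{\max}$.
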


\begin{proof}
In this proof, we omit the IPM iterate counter $k$; i.e., $(z^k,\lambda^k,s^k)\equiv (z, \lambda,s)$. Let us define the following functions, for all $i=1,\dots,n$:
    \[
    \begin{array}{rll}
    f_i(\alpha) &:= & (z_i+\alpha \Delta z_i)(s_i+\alpha \Delta s_i)- \underline{\gamma}(z+\alpha \Delta z)^\top(s+\alpha \Delta s)/(n+m), \\[1.5ex]
		\bar{f}_i(\alpha) &:= & \bar{\gamma}(z+\alpha \Delta z)^\top(s+\alpha \Delta s)/(n+m)-(z_i+\alpha \Delta z_i)(s_i+\alpha \Delta s_i),\\[1.5ex] 
		h(\alpha)&:= &(1 -(1-\sigma_{max})\alpha) z^\top s - (z+\alpha \Delta z)^\top(s+\alpha \Delta s), \\[1.5ex]
		g_d(\alpha)&:= & (z+\alpha \Delta z)^\top(s+\alpha \Delta s) \\[1.5ex]
              & & \qquad - \gamma_d \|\hat{Q}(z+\alpha \Delta z)- \hat{A}^\top(\lambda+\alpha \Delta \lambda) +\hat{c} -(s+\alpha \Delta s)\|, \\[1.5ex]
		g_p(\alpha)&:= & (z+\alpha \Delta z)^\top(s+\alpha \Delta s) - \gamma_p \| A(z+\alpha \Delta z) -b \|.
    \end{array}
    \]
Using Lemma \ref{lemma:reductions_in_IPM} in the expressions of $g_d(\alpha)$  and $g_p(\alpha)$, we have
\[
\begin{array}{rll}
 g_d(\alpha) &=&(z+\alpha \Delta z)^\top(s+\alpha \Delta s) -\gamma_d (1-\alpha) \|r_c\|,\\[1.5ex]
 g_p(\alpha) &\geq & (z+\alpha \Delta z)^\top(s+\alpha \Delta s) -   \gamma_p (1-\alpha) \|r_b\|.
\end{array}
\]

We start by proving that there exists $\hat{\alpha}^{k}>0$ such that
\begin{equation*}
	f_i(\alpha)\geq 0, \quad \bar{f}_i(\alpha)\geq 0, \quad h(\alpha)\geq 0, \quad g_p(\alpha)\geq 0, \quad g_d(\alpha)\geq 0
\end{equation*}
for all $i = 1, \dots, n+m$ and for all $\alpha \in [0, \hat{\alpha}^k]$. In the following, we will extensively use the identities in Lemma \ref{lemma:reductions_in_IPM}. We have
\begin{equation} \label{eq:ineq_1}
\begin{array}{rll}
  f_{i}(\alpha) & = &\underbrace{(1-\alpha)\left(z_is_i-\underline{\gamma} \frac{z^\top s}{n+m}\right)}_{\geq 0}\\[2.2ex]
                &   & + \;\alpha^2\left(\Delta z_i \Delta s_i - \underline{\gamma}\frac{(\Delta z)^\top \Delta s}{n+m}\right) + \alpha \sigma \left(1-\underline{\gamma}\right)\frac{z^\top s}{n+m}, \\[1.5ex]
		& \geq & \quad\, \alpha^2\left(\Delta z_i \Delta s_i - \underline{\gamma}\frac{(\Delta z)^\top \Delta s}{n+m}\right) + \alpha \sigma (1-\underline{\gamma})\frac{z^\top s}{n+m}.
\end{array}
\end{equation}
Since $z^\top s>0$, using a simple continuity argument, we can infer the existence of a small enough 
$\underline{f}_i>0$ such that $ f_{i}(\alpha)\geq 0$ for all $\alpha \in [0, \underline{f}_i]$. Reasoning analogously,
\begin{equation} \label{eq:ineq_2}
\begin{array}{rll}
    \bar{f}_{i}(\alpha) & = & \underbrace{(1-\alpha)\left(\bar{\gamma} \frac{z^\top s}{n+m} -z_is_i\right)}_{\geq 0}\\[2.2ex]
     & & +\;\alpha^2\left(\bar{\gamma}\frac{(\Delta z)^\top \Delta s}{n+m}-\Delta z_i \Delta s_i\right) + \alpha \sigma (\bar{\gamma}-1)\frac{z^\top s}{n+m}, \\[1.5ex]
		& \geq & \quad\, \alpha^2\left(\bar{\gamma}\frac{(\Delta z)^\top \Delta s}{n+m}-\Delta z_i \Delta s_i\right) + \alpha \sigma (\bar{\gamma}-1)\frac{z^\top s}{n+m},
\end{array}
\end{equation}
and hence, there exists a small enough 
$\bar{f}_i>0$ such that $\bar{f}_{i}(\alpha)\geq 0$ for all $\alpha \in [0, \bar{f}_i]$.

Concerning $h(\alpha)$, we have
\begin{equation} \label{eq:ineq_3}
	h(\alpha)= z^\top s(\sigma_{max}-\sigma) \alpha- \alpha^2 (\Delta z)^\top\Delta s,
\end{equation}
and since $z^\top s(\sigma_{max}-\sigma)>0$, there exists $\hat{h}>0$ small enough such that we have $h(\alpha)\geq 0$ for all $\alpha \in [0, \hat{h}]$.

Concerning $g_d(\alpha)$, we have
\begin{equation} \label{eq:ineq_4}
\begin{array}{rll}
  g_d(\alpha) & = & \underbrace{(1-\alpha)(z^\top s- \gamma_d \|r_c\|)}_{\geq 0} + \alpha \sigma z^\top s +\alpha^2 (\Delta z)^\top\Delta s \\[2.5ex]
		& \geq & \alpha \sigma z^\top s +\alpha^2 (\Delta z)^\top\Delta s,
\end{array}
\end{equation}
and hence, there exists $\hat{g}_d>0$ small enough such that $g_d(\alpha)\geq 0$ for all $\alpha \in [0, \hat{g}_d]$.

Finally, concerning $g_p(\alpha)$, we have
\begin{equation} \label{eq:ineq_5}
\begin{array}{rll}
   g_p(\alpha) & \geq & \underbrace{(1-\alpha)(z^\top s- \gamma_p \|r_b\|)}_{\geq 0} + \alpha \sigma z^\top s +\alpha^2 (\Delta z)^\top\Delta s \\[2.5ex]
		& \geq & \alpha \sigma z^\top s +\alpha^2 (\Delta z)^\top\Delta s,
\end{array}
\end{equation}
and hence there exists $\hat{g}_p>0$ small enough such that $g_p(\alpha)\geq 0$ for all $\alpha \in [0, \hat{g}_p]$. 

{Let us define
\begin{equation*}
	\hat{\alpha}^k = \min \left\{ \min_{i} \underline{f}_i, \;\; \min_{i} \bar{f}_i,  \;\; \hat{h}, \;\; \hat{g}_d, \;\; \hat{g}_p, \; 1\right\} >0.
\end{equation*}
{To prove the thesis}, it remains that
\begin{equation*}
(z(\alpha),\lambda(\alpha),s(\alpha) ) \in \mathbb{R}^n_{>0} \times \mathbb{R}^m \times \mathbb{R}^n_{>0}	\hbox{ for all } \alpha \in [0,\hat{\alpha}^k].
\end{equation*}
{To show the last part of the thesis}, it is enough to prove that $\alpha^{*,k}>\hat{\alpha}^k$. To this aim, let us suppose by contradiction that $\alpha^{*,k} \leq \hat{\alpha}^k$. By  definition of $\alpha^{*,k}$, there exists $\bar{\ell} \in \{1, \dots,n+m\}$ such that $(z_{\bar{\ell}}+\alpha^{*,k} \Delta z_{\bar{\ell}})(s_{\bar{\ell}}+\alpha^{*,k} \Delta s_{\bar{\ell}})=0$. Hence,
\begin{equation*}
	f_{\bar{\ell}}(\alpha^{*,k})= - \underline{\gamma}(z\big(\alpha^{*,k})\big)^\top \frac{s(\alpha^{*,k})}{n+m} \geq 0 \;\; \Longrightarrow \;\; (z\big(\alpha^{*,k})\big)^\top s(\alpha^{*,k}) =0. 
\end{equation*}
From this implication,  using $g_d(\alpha^{*,k})$ and $g_p(\alpha^{*,k})$, we obtain that
\begin{equation*}
\begin{array}{rll}
     Az(\alpha^{*,k})  - b & = & 0, \\[1.5ex]
	\hat{Q}z(\alpha^{*,k}) +\hat{A}^\top\lambda(\alpha^{*,k}) - s(\alpha^{*,k}) +\hat{c}& = &0, 
\end{array}
\end{equation*}
i.e.,\; $\left(z(\alpha^{*,k}), \; \lambda(\alpha^{*,k}), \; s(\alpha^{*,k})\right)$ is a solution of problem \eqref{eq:F_def}.  
}
\end{proof}

The next two results establish that Algorithm \ref{alg:ipm}  converges to a solution of problem \eqref{eq:F_def}. This is done by establishing that the right-hand sides of the Newton systems are uniformly bounded, see Corollary \ref{rem:uniform_boundedness_rhs}, and by showing that the complementarity product $(z^k)^\top s^k$ cannot be bounded away from zero.

\begin{corollary}\label{rem:uniform_boundedness_rhs}
	The right-hand sides of the Newton systems \eqref{eq:6.4}  are uniformly bounded. 
\end{corollary}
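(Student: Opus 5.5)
The plan is to work inside the set $N^*=B(v^*,\delta^*)\cap\mathcal{N}(\bar\gamma,\underline{\gamma},\gamma_p,\gamma_d)$. As indicated before Lemma~\ref{lem:original-lemma3}, all iterates of Algorithm~\ref{alg:ipm} started in $N^*$ remain there. It therefore suffices to bound the three blocks of the right-hand side of \eqref{eq:6.4}, namely $-r_c^k$, $-r_b^k$ and $-Z^kS^ke+\sigma_k\mu_ke$, uniformly for $v^k=(z^k,\lambda^k,s^k)\in B(v^*,\delta^*)$.

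\emph{Step 1 (boundedness of the iterates).} Since $\|v^k-v^*\|<\delta^*$, every component satisfies $\|z^k\|,\|\lambda^k\|,\|s^k\|\le \|v^*\|+\delta^*=:R$. Membership of $v^k$ in the ball follows from the invariance statement: Lemma~\ref{lem:damped-lemma3} shows that damped Newton steps toward the target $\mu^+=\sigma_k\mu_k<\mu^*$ stay in $B(v^*,\delta^*)$. To invoke it I must check that the step in \eqref{eq:6.4} has the form $\alpha(v^+-v)$, with $v^+$ from \eqref{eq:newton-iterate}. This holds because \eqref{eq:6.4} reads $D\hat F(v^k)\Delta v=-(\hat F(v^k)-\sigma_k\mu_k\tilde e)$. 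I also need $\sigma_k\mu_k<\mu^*$. If this is not guaranteed, I will add it as the standing hypothesis on the starting point, e.g.\ $\mu_0<\mu^*$, and note that $\mu_k$ is nonincreasing by \eqref{eq:6.6}.

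\emph{Step 2 (the residual blocks).} Because $r_c$ and $r_b$ are affine in $v$, we get
\[
\|r_c^k\|\le \|\hat Q\|R+\|\hat A\|R+R+\|\hat c\|,\qquad \|r_b^k\|\le \|A\|R+\|b\|.
\]
Alternatively, Lemma~\ref{lemma:reductions_in_IPM} gives $r^k=\prod_{j<k}(1-\alpha_j)\,r^0$, so both residuals are bounded by their initial values; either argument works.

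\emph{Step 3 (the complementarity block).} For each component,
\[
|z_i^ks_i^k-\sigma_k\mu_k|\le z_i^ks_i^k+\sigma_{\max}\mu_k\le(\bar\gamma+1)\mu_k .
\]
Here $\mu_k=(z^k)^\top s^k/(n+m)\le R^2/(n+m)$, or simply $\mu_k\le\mu_0$ by the Armijo condition \eqref{eq:6.6}. Hence this block has norm at most $\sqrt{n+m}\,(\bar\gamma+1)\mu_0$. Combining Steps 2 and 3 gives a uniform bound on the right-hand side.

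The main obstacle is Step 1: rigorously justifying that the iterates remain in $B(v^*,\delta^*)$. Steps 2 and 3 are routine. Step 1 requires matching the algorithm's step to the damped Newton iterate of Lemma~\ref{lem:damped-lemma3}, which uses the same Jacobian $D\hat F(v^k)=\partial_vH(v^k,\cdot)$, and ensuring $\sigma_k\mu_k<\mu^*$. If that route proves delicate, I will fall back on the monotonicity argument alone. The residuals are bounded via $(1-\alpha_j)$ contraction and the complementarity block via $\mu_k\le\mu_0$. That gives uniform boundedness of the right-hand sides without any ball membership.
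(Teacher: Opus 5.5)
Your proposal is correct, but only your fallback argument is needed, and it bounds the residual blocks differently from the paper.

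\textbf{The paper's proof.} It uses only two facts: the iterates lie in $\mathcal{N}(\bar\gamma,\underline{\gamma},\gamma_p,\gamma_d)$, and $(z^k)^\top s^k\le (z^0)^\top s^0$ by \eqref{eq:6.6}. The neighborhood inequalities $z^\top s\ge\gamma_p\|r_b\|$ and $z^\top s\ge\gamma_d\|r_c\|$ then give $\|r_b^k\|\le (z^0)^\top s^0/\gamma_p$ and $\|r_c^k\|\le (z^0)^\top s^0/\gamma_d$. The complementarity block is bounded through $z_i^ks_i^k\le\bar\gamma\mu_k$ and $\mu_k\le\mu_0$, exactly as in your Step 3.

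\textbf{Your route.} You bound the residuals by the exact contraction $(r_b^k,r_c^k)=\prod_{j<k}(1-\alpha_j)\,(r_b^0,r_c^0)$ from Lemma~\ref{lemma:reductions_in_IPM}. This is more elementary and does not use the residual conditions of the neighborhood at all. It would therefore also work with $\mathcal{N}_{-\infty}$, or with any step rule satisfying $\alpha_k\in[0,1]$. The paper's route gives slightly more: its intermediate bounds show that the whole right-hand side is $O((z^k)^\top s^k)$.

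\textbf{Drop Step 1.} Ball membership is not needed for this corollary. Invoking Lemma~\ref{lem:damped-lemma3} would require the extra hypothesis $\sigma_k\mu_k<\mu^*$ (e.g.\ $\sigma_{\max}\mu_0<\mu^*$). That condition is not part of the statement and does not obviously follow from $v^0\in B(v^*,\delta^*)\cap\mathcal{N}(\bar\gamma,\underline{\gamma},\gamma_p,\gamma_d)$. Consistently with this, the paper uses $B(v^*,\delta^*)$ only later, in Theorem~\ref{th:convergence}, to bound the inverse Newton matrices. Make the monotonicity argument your main proof.
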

\begin{proof}
   {The proof of Corollary \ref{rem:uniform_boundedness_rhs} is somehow standard, see Section \ref{sec:appendix_AI_neq0} for more details}.
\end{proof}

\begin{theorem} \label{th:convergence} Suppose that $(z^0,\lambda^0,s^0) \in B(v^*, \delta^*) \cap \mathcal{N}(\bar {\gamma},\underline{\gamma},\gamma_p,\gamma_d)   $. Then
 Algorithm \ref{alg:ipm}{for solving the system \eqref{eq:F_def} when $A_I\neq0$,} produces a sequence of iterates in $B(v^*, \delta^*) \cap \mathcal{N}(\bar {\gamma},\underline{\gamma},\gamma_p,\gamma_d) $ such that we have $\lim  (z^k)^\top s^k=0$, i.e.,{using the definition of $\mathcal{N}(\bar {\gamma},\underline{\gamma},\gamma_p,\gamma_d)$, } $(z^k,\lambda^k,s^k) $ converges to a solution of problem \eqref{eq:F_def}. 
\end{theorem}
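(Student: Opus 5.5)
The proof is an induction on $k$ with two parts. First, every iterate stays in $N^*=B(v^*,\delta^*)\cap\mathcal{N}(\bar\gamma,\underline\gamma,\gamma_p,\gamma_d)$. Second, the products $(z^k)^\top s^k$ cannot stay bounded away from zero.

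\textbf{Invariance.} Suppose $v^k=(z^k,\lambda^k,s^k)\in N^*$. Membership in $\mathcal{N}$ follows from the step rule \eqref{eq:6.5c_Bis}. Theorem~\ref{th:IPM_well_definitness} guarantees an admissible $\alpha_k>0$, unless a solution of \eqref{eq:F_def} is hit exactly, in which case we stop. Membership in $B(v^*,\delta^*)$ comes from Lemma~\ref{lem:damped-lemma3}. The Newton direction of \eqref{eq:6.4} with target $\sigma_k\mu_k$ is exactly $v^+-v^k$ from \eqref{eq:newton-iterate} with $\mu^+=\sigma_k\mu_k$. So $v^{k+1}=v_{\alpha_k}$, and \eqref{eq:damped-solution} gives
\[
\|v^{k+1}-v^*\|\le (1-\tfrac{\alpha_k}{2})\|v^k-v^*\|+\alpha_k\tfrac{\sigma_k\mu_k}{\mu^*}\tfrac{\delta^*}{2}<\delta^*,
\]
provided $\sigma_k\mu_k<\mu^*$. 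To secure this, we need $\mu_0<\mu^*$, which I would either add as a harmless requirement on the starting point or derive from $v^0\in B(v^*,\delta^*)$ by shrinking $\delta^*$. Then \eqref{eq:6.6} makes $\mu_k$ nonincreasing, so $\sigma_k\mu_k\le\mu_0<\mu^*$ for all $k$ and the induction closes.

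\textbf{$\mu_k\to0$.} By \eqref{eq:6.6}, $\mu_k$ is monotone, so it has a limit $\hat\mu\ge0$. Suppose $\hat\mu>0$. The iterates lie in a bounded set on which $D\hat F$ is uniformly invertible with $\|D\hat F^{-1}\|\le K$ (Lemma~\ref{lem:uniform_boundedness}). The right-hand sides of \eqref{eq:6.4} are uniformly bounded (Corollary~\ref{rem:uniform_boundedness_rhs}). Hence $\|(\Delta z,\Delta\lambda,\Delta s)\|\le \bar C$ uniformly in $k$, and in particular $|(\Delta z)^\top\Delta s|\le \bar C^2$ and $|\Delta z_i\Delta s_i|\le \bar C^2$.

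I would then revisit the lower bounds \eqref{eq:ineq_1}--\eqref{eq:ineq_5} with these uniform constants and with $z^\top s=(n+m)\mu_k\ge(n+m)\hat\mu$. Each function is bounded below by $\alpha c_1-\alpha^2 c_2$ with $c_1,c_2>0$ independent of $k$. For $h$ one uses $\sigma_{\max}-\sigma_k\ge0$; if $\sigma_k=\sigma_{\max}$ is allowed, I would instead use the slack built into the factor $(1-\sigma_{\max})$, or require $\sigma_k<\sigma_{\max}$ uniformly. The same argument keeps positivity of $z(\alpha),s(\alpha)$. This yields a uniform $\tilde\alpha>0$ with $\alpha_k\ge\tilde\alpha$ for all $k$. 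Then \eqref{eq:6.6} gives $\mu_{k+1}\le(1-(1-\sigma_{\max})\tilde\alpha)\mu_k$, so $\mu_k\to0$, contradicting $\hat\mu>0$.

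\textbf{Convergence to a solution.} With $\mu_k\to0$, the neighborhood conditions $z^\top s\ge\gamma_p\|r_b\|$ and $z^\top s\ge\gamma_d\|r_c\|$ force $r_b^k,r_c^k\to0$, and the complementarity products tend to zero. The iterates are bounded in $\overline{B(v^*,\delta^*)}$, so every cluster point solves \eqref{eq:F_def}. Theorem~\ref{Th:IFT} says the only solution of $H(v,0)=0$ in $\overline{B(v^*,\delta)}\supseteq\overline{B(v^*,\delta^*)}$ is $g(0)=v^*$. Hence the whole sequence converges to $v^*$.

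The main obstacle is the uniform step-size bound. It requires the uniform bounds on the Newton directions, and it requires carefully checking that the constants in \eqref{eq:ineq_1}--\eqref{eq:ineq_5} do not depend on $k$, especially for the Armijo function $h$. The compatibility $\mu_0<\mu^*$ needed for the ball invariance is the other delicate point.
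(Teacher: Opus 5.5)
Your proposal is essentially the paper's proof. It uses the same invariance of $B(v^*,\delta^*)\cap\mathcal{N}(\bar{\gamma},\underline{\gamma},\gamma_p,\gamma_d)$ via Lemma~\ref{lem:damped-lemma3} and Theorem~\ref{th:IPM_well_definitness}, followed by the same contradiction argument: uniformly bounded Newton directions (Lemma~\ref{lem:uniform_boundedness}, Corollary~\ref{rem:uniform_boundedness_rhs}), a $k$-independent lower bound on $\alpha_k$ from \eqref{eq:ineq_1}--\eqref{eq:ineq_5}, and the geometric decrease of $(z^k)^\top s^k$ forced by \eqref{eq:6.6}. The points you flag are genuinely skipped in the paper, and your handling is sound once you pick the right option in each case: a condition such as $\mu_0<\mu^*$ must be added as a hypothesis on the starting point, since shrinking $\delta^*$ does not produce it ($\mu^*$ and the values of $\mu$ attained on $B(v^*,\delta^*)\cap\mathcal{N}(\bar{\gamma},\underline{\gamma},\gamma_p,\gamma_d)$, e.g.\ at central-path points, both scale linearly with $\delta^*$); for the Armijo function you need $\sigma_k\le\sigma'<\sigma_{\max}$ uniformly, because $h(\alpha)=\alpha(\sigma_{\max}-\sigma_k)z^\top s-\alpha^2(\Delta z)^\top\Delta s$ has no slack at $\sigma_k=\sigma_{\max}$; and your closing uniqueness argument via Theorem~\ref{Th:IFT} correctly upgrades vanishing residuals to convergence of the whole sequence to $v^*$, which the paper only asserts.
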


\begin{proof}
From Lemma \ref{lem:damped-lemma3} and Theorem \ref{th:IPM_well_definitness}, we can assume that {for all} $k\in \mathbb{N}$, we have  $(z^k,\lambda^k,s^k) \in B(v^*, \delta^*) \cap \mathcal{N}(\bar {\gamma},\underline{\gamma},\gamma_p,\gamma_d) $. 
 Let us argue by contradiction supposing that there exists $\varepsilon^*>0$ such that we have $(z^k)^\top s^k> \varepsilon^*$ for all $k \in \mathbb{N}$.\\
 
 \textbf{Claim 1} There exists a constant $C_1$  such that
 \[
 \left\|  \left[\Delta z^k,\; \Delta \lambda^k, \; \Delta s^k \right]^\top\right\|\leq C_1 \; \hbox{ for all } \; k \in \mathbb{N}.
 \]
 The proof of this fact follows observing that in $B(v^*, \delta^*)$, the Newton matrices have a uniformly bounded inverse, and that the right-hand sides of \eqref{eq:6.4} are uniformly bounded; see Lemma \ref{lem:uniform_boundedness} and Corollary~\ref{rem:uniform_boundedness_rhs}. As a consequence, there exists another constant $C_2$ such that
 \begin{equation} \label{eq:uniform_bound_deltas}
 \begin{array}{rll}
\left|\Delta z_i^k \Delta s_i^k - \underline{\gamma}\frac{(\Delta z^k)^\top \Delta s^k}{n+m}\right| & \leq & C_2, \\[1.5ex]
\left |\bar{\gamma}\frac{(\Delta z^k)^\top \Delta s^k}{n+m}-\Delta z_i^k \Delta s_i^k \right| & \leq & C_2, \\[1.5ex]
 		\left|(\Delta z^k)^\top\Delta s^k\right| &\leq& C_2.
 \end{array}
 \end{equation} 
 
 \textbf{Claim 2} There exists $\alpha^*>0$ such that $\alpha^k \geq \alpha^*$ for all $j \in \mathbb{N}$. \\
 
 Using \eqref{eq:uniform_bound_deltas} in equations \eqref{eq:ineq_1}, \eqref{eq:ineq_2}, \eqref{eq:ineq_3}, \eqref{eq:ineq_4}, and \eqref{eq:ineq_5} we have
 \[
 \begin{array}{rll}
 f_i(\alpha) & \geq &\; \alpha^2\left(\Delta z_i^k \Delta s_i^k - \underline{\gamma}\frac{(\Delta z^k)^\top \Delta s^k}{n+m}\right) + \alpha \sigma (1-\underline{\gamma})\frac{(z^k)^\top s^k}{n+m}, \\[1.5ex]
 	              &\geq &\; - C_2 \alpha^2 +\alpha \sigma (1-\underline{\gamma}) \frac{\varepsilon^*}{n+m},\\ [1.5ex]
 	\bar{f}_i(\alpha) & \geq &\; \left(\bar{\gamma}\frac{(\Delta z^k)^\top \Delta s^k}{n+m}-\Delta z_i^k \Delta s_i^k \right) + \alpha \sigma (\bar{\gamma}-1)\frac{(z^k)^\top s^k}{n+m}, \\[1.5ex]
 	             & \geq &\; - C_2 \alpha^2 +\alpha \sigma (\bar{\gamma}-1) \frac{\varepsilon^*}{n+m},\\ [1.5ex]
 	 h(\alpha) & = &\;(z^k)^\top s^k(\sigma_{max}-\sigma) \alpha- \alpha^2 (\Delta z^k)^\top\Delta s^k, \\[1.5ex]
 	         & \geq  &\; \varepsilon^*(\sigma_{max}-\sigma) \alpha - C_2 \alpha^2,\\[1.5ex]
 	 g_d(\alpha) & \geq  &\;  \alpha \sigma (z^k)^\top s^k +\alpha^2 (\Delta z^k)^\top\Delta s^k \;\;\,\geq\;\;\, \varepsilon^*\sigma \alpha - C_2 \alpha^2,  \\[1.5ex]    
 	 g_p(\alpha) & \geq  &\;   \alpha (\sigma - q \gamma_p ) (z^k)^\top s^k +\alpha^2 (\Delta z^k)^\top\Delta s^k \;\;\,\geq \;\;\,\varepsilon^*\alpha \sigma  - C_2 \alpha^2.
 \end{array}
 \]
 Hence, $\alpha^k\ge\alpha^*$ with
 \begin{equation*}\label{eqn:alpha_lower_bound}
\alpha^*:=\min\left\{{ 1},\; \frac{\sigma(1-\underline{\gamma})\varepsilon^*}{C_2(n+m)},\;\frac{\sigma(\bar{\gamma}-1)\varepsilon^*}{C_2(n+m)},\; \frac{(\sigma_{max}-\sigma)\varepsilon^*}{C_2}, \;\frac{\sigma \varepsilon^*}{C_2}\right\}.
 \end{equation*}
The convergence claim follows by observing that the inequality
\begin{equation*}
	\varepsilon^* \leq (z^k)^\top s^k \leq (1 -(1-\sigma_{max})\alpha^* )^k(z^0)^\top s^0
\end{equation*}
leads to a contradiction when $k \to \infty$.
\end{proof}

Before concluding this section,  we would like to emphasize that the above proof complements and expands \cite[Remark 3.1]{MR1242461} and \cite[Section 3]{MR4784765}.

\begin{figure}[htbp]
\begin{subfigure}[b]{\textwidth}
    \centering
    \includegraphics[width=\textwidth]{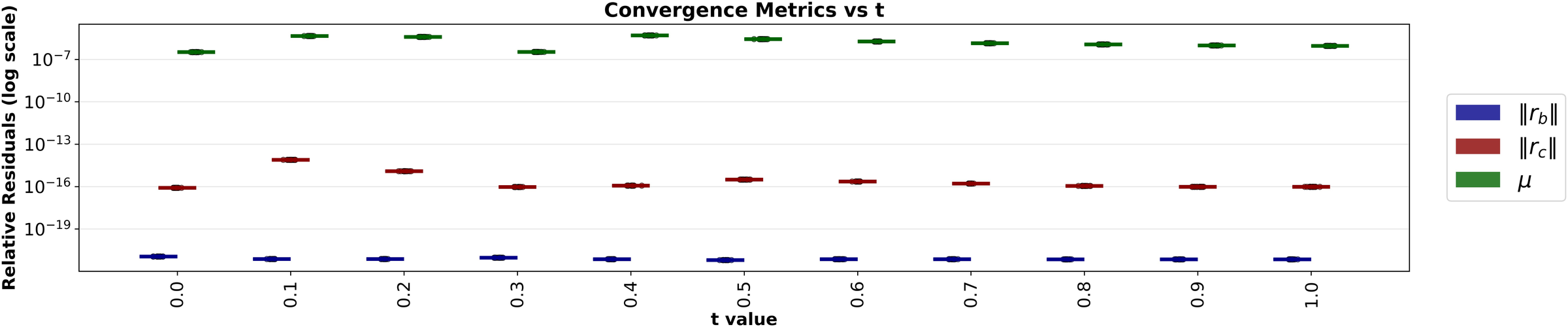} 
        \caption{$n=m=100, \; \; p=q=50$}
    \end{subfigure}
     \begin{subfigure}[b]{\textwidth}
        \centering
       \includegraphics[width=\textwidth]{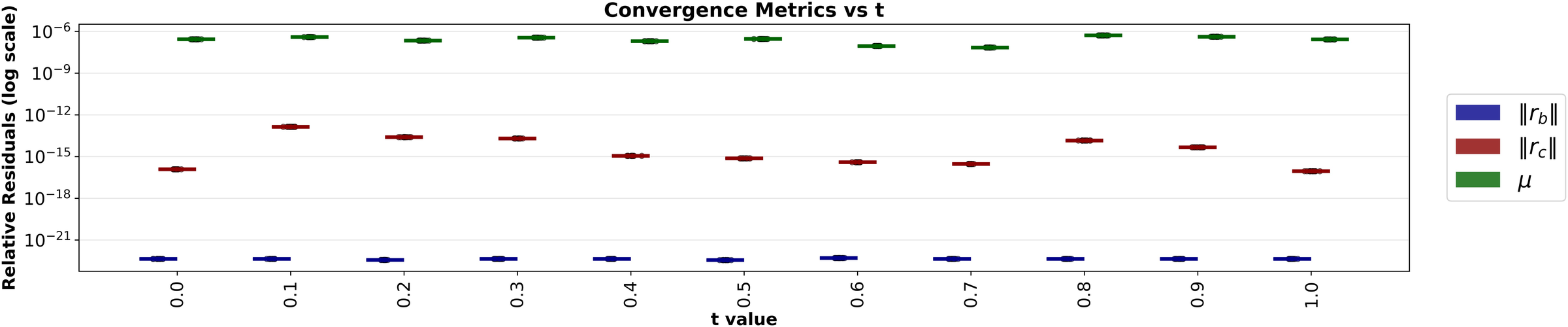} 
        \caption{$n=m=1000, \; \; p=q=500$}
    \end{subfigure}
    \caption{Convergence metrics versus problem scaling parameter $t$. Grouped boxplots (three per $t$ value) show primal feasibility residual $\|r_b\|$ (blue), dual feasibility residual $\|r_c\|$ (red), and complementarity measure $\mu$ (green) across $15$ independent trials for each $t \in [0, 1]$.} 
     \label{fig:convergence_metrics}
\end{figure}

\section{Numerical experiments.}\label{sec:Numerical experiments}
All the numerical results are obtained using  \texttt{Python 3} on a \texttt{MacBook Pro} using \texttt{Apple M3 Pro} CPU with \texttt{18 GB} of RAM.  The code is publicly available at \url{https://github.com/StefanoCipolla/MinMax}.
\subsection{Implementation Details.}
As  is customary in the IPM literature, the implementation usually deviates from the theoretical developments in order to gain computational efficiency.
In particular, we do not require the iterates of Algorithm \ref{alg:ipm} to lie in any of the neighborhoods previously defined, using, instead, a well-known Predictor-Corrector strategy \cite{MR1186163}. {Newton's directions are computed by solving \eqref{eq:6.4} by a well-known $2 \times 2$ block reduction, see, e.g. \cite{MR4865731}.} Moreover, following a standard practice in IPM literature, stopping criteria are chosen as

\begin{equation*}
\| \hat{Q}z + \hat{A}^{\top}\lambda +\hat{c} - {s}\|_{\infty} \leq R \cdot \mathrm{tol} 
\quad \land \quad 
\|Az-b\|_{1} \leq R \cdot \mathrm{tol} 
\quad \land \quad 
c_{z,s} \leq \mathrm{tol},
\end{equation*}
where
\begin{equation*}
\mathrm{tol} = 10^{-6}, \quad R := \max\{\|{Q}\|_{\infty},\|A\|_{\infty}, \|b\|_1, \|{c}\|_1\},
\end{equation*}
and
\begin{equation*}
c_{{z},{s}} := \max_{i} \left\{ \min\left\{ |(z_i s_i)|, |z_i|, |s_i| \right\} \right\}.
\end{equation*}

For the initial point, we adapt the technique from \cite{MR1186163}.

\subsection{Dataset.}
The random instances, {used for the experiments in Subsections \ref{Subsect:Decoupled Constraints Case} and \ref{Subsect: Coupled Constraints Case},} are generated by adapting the procedure used in \cite{MR4906232} for SDP problems.  In particular, such random instances are generated with varying dimensions $(n, m, p, q)$. {The vector} $(z^*, \lambda^*, s^*)$ is drawn uniformly from $[10^{-3}, 10^{3}]$ with strict complementarity enforced by setting $z$ to zero at $k = \min\{10, \lfloor(n+m)/5\rfloor\}$ random positions and ensuring $s$ is non-zero only at these positions. Blocks $Q_{11}, Q_{22}$ are constructed via spectral decomposition $UDU^\top$ with controlled eigenvalue spectra to ensure positive definiteness. Constraint matrices have standard Gaussian entries, with $A_I = 0$ for the case with decoupled constraints. The routines to generate the random instances are publicly available at \url{https://github.com/StefanoCipolla/MinMax}.

\begin{figure}[htbp]
\begin{subfigure}[b]{\textwidth}
    \centering
    \includegraphics[width=\textwidth]{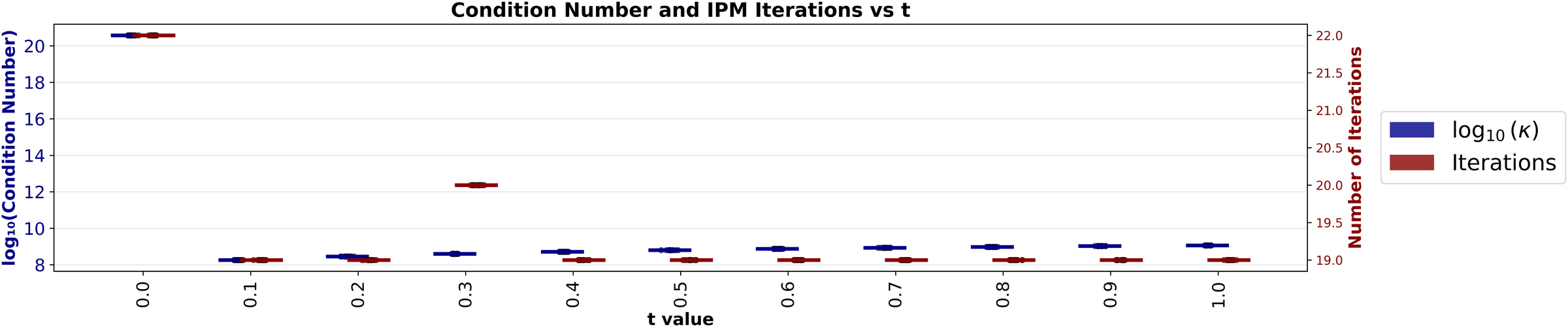} 
        \caption{$n=m=100, \; \; p=q=50$}
    \end{subfigure}
     \begin{subfigure}[b]{\textwidth}
        \centering
       \includegraphics[width=\textwidth]{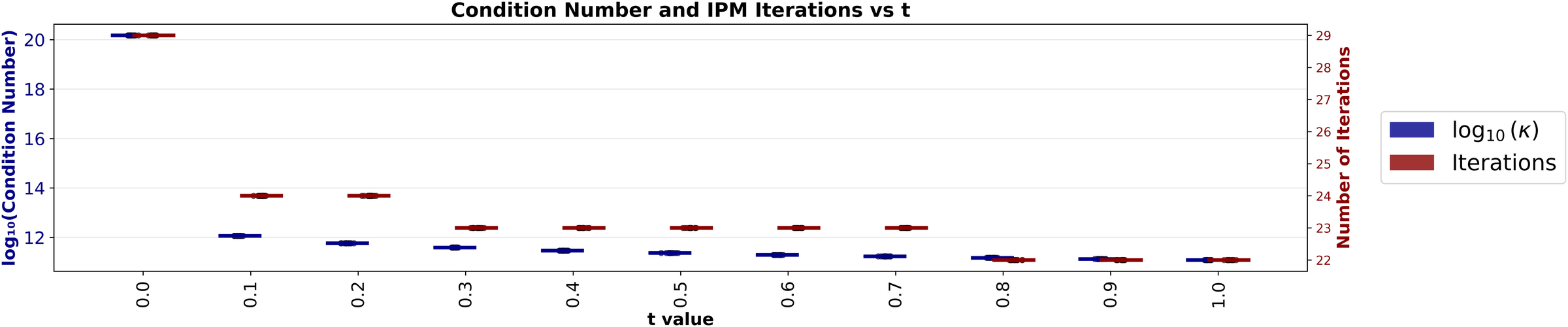} 
        \caption{$n=m=1000, \; \; p=q=500$}
    \end{subfigure}
   \caption{Stationarity matrix condition number and iteration count versus problem scaling parameter $t$. Grouped boxplots (two per $t$ value) display $\log_{10}(\kappa)$ (blue) and IPM iteration count (red) across $15$ trials. Despite the significant increase in condition number at $t = 0$ (loss of strong convexity-concavity), the iteration count remains stable at approximately $20$ iterations.}
    \label{fig:condition_iterations}
\end{figure}

\subsection{Decoupled constraints case.}\label{Subsect:Decoupled Constraints Case}

The objective of this section is to demonstrate the robustness of the proposed IPM with respect to the degradation of strong convexity and strong concavity properties in the decoupled case ($A_I = 0$). To systematically assess this robustness, we generate random instances of the problem data $(\hat{Q}, \hat{c}, b, A_O, B_I)$ and parametrize the diagonal blocks of the matrix $\hat{Q}$ as $tQ_{11}$ and $tQ_{22}$ for $t \in [0, 1]$. As $t$ approaches zero, the strong convexity of the maximization objective and the strong concavity of the minimization objective are progressively eliminated, allowing us to evaluate algorithmic performance as the problem structure degenerates from strongly convex-strongly concave to merely convex-concave.

Figure~\ref{fig:convergence_metrics} presents the convergence behavior of the IPM across this parametrized family of problems. The upper panel displays results for problem dimensions $n = m = 100$ and $p = q = 50$, while the lower panel presents results for the larger-scale setting with $n = m = 1000$ and $p = q = 500$. Similarly, Figure~\ref{fig:condition_iterations} illustrates the iteration count and numerical conditioning of the Jacobian of the stationarity system $D\hat{F}(z^*, \lambda^*, s^*)$ for both problem sizes, with the upper and lower panels corresponding to the small-scale and larger-scale instances, respectively. The numerical experiments reveal several important findings that underscore the robustness of the proposed method:

\paragraph{Convergence accuracy.}
As expected from a well-designed IPM, the magnitude of the relative residuals remains below the selected tolerance $\mathrm{tol=1e-6}$, see {Figure~\ref{fig:convergence_metrics}}, demonstrating that the algorithm consistently delivers solutions meeting the specified accuracy requirements regardless of the degree of strong convexity or strong concavity.

\begin{figure}[htbp]
\begin{subfigure}[b]{\textwidth}
    \centering
    \includegraphics[width=\textwidth]{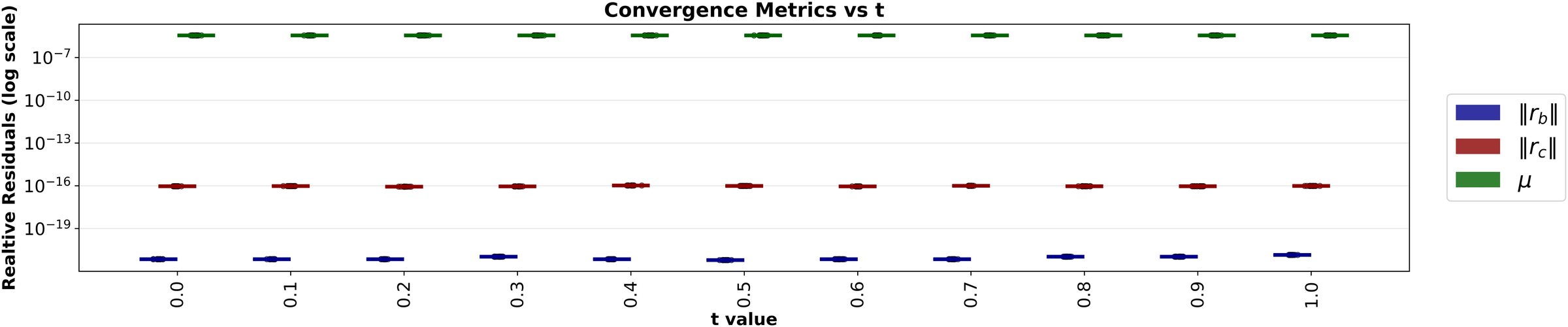} 
        \caption{$n=m=100, \; \; p=q=50$}
        \label{fig:1a_A_=0}
    \end{subfigure}
     \begin{subfigure}[b]{\textwidth}
        \centering
       \includegraphics[width=\textwidth]{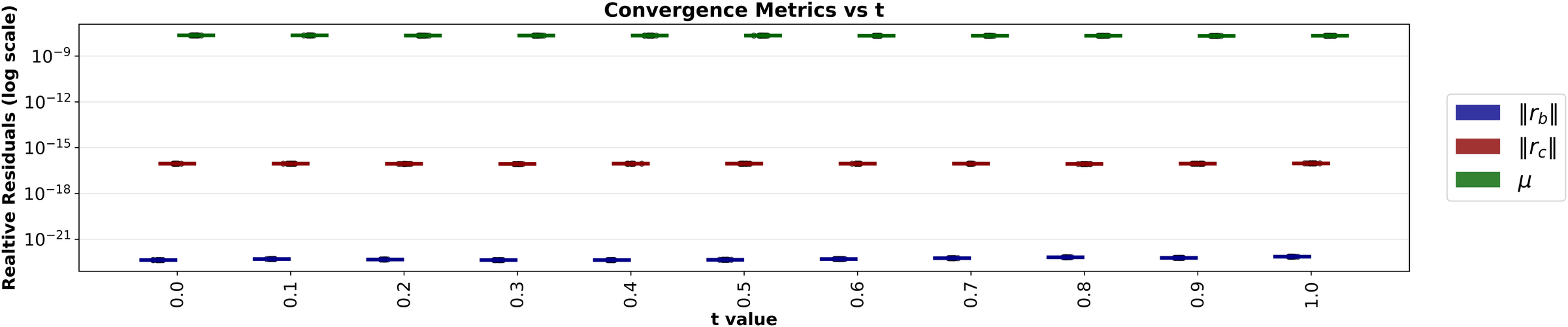} 
        \caption{$n=m=1000, \; \; p=q=500$}
        \label{fig:1a_B_=0}
    \end{subfigure}
    \caption{Convergence metrics versus constraints coupling parameter $t$. Grouped boxplots (three per $t$ value) show primal feasibility residual $\|r_b\|$ (blue), dual feasibility residual $\|r_c\|$ (red), and complementarity measure $\mu$ (green) across $15$ independent trials for each $t \in [0, 1]$.} 
     \label{fig:convergence_metrics_coupled}
\end{figure}

\paragraph{Iteration stability.}
Perhaps most remarkably, the iteration count exhibits striking consistency across the entire parameter range $t \in [0, 1]$. Despite the substantial degradation in problem conditioning as $t \to 0$ -- where the diagonal blocks $Q_{11}$ and $Q_{22}$ vanish entirely -- the number of iterations required for convergence shows only negligible variation. This stability persists even as the condition number $\kappa$ of the of the Jacobian $D\hat{F}(z^*,\lambda^*,s^*)$  increases significantly, as evidenced in Figure~\ref{fig:condition_iterations}. 
\paragraph{Dimension scalability.}
The comparison of the upper and lower panels of Figure~\ref{fig:condition_iterations} reveals the algorithm's favorable scaling properties. Increasing the problem dimension by one order of magnitude -- from $n = m = 100$ to $n = m = 1000$ -- results in virtually no increase in iteration count when $t \to 1$ (strongly convex-concave regime). Even in the most challenging scenario where $t \to 0$ (complete loss of strong convexity-concavity), the average iteration count increases by only approximately $5$ iterations. This mild growth in iteration counts w.r.t. the problem dimension provides strong empirical support for the polynomial complexity established theoretically in Subsection~\ref{sec:convergence_decoupled}.

\subsection{Coupled constraints case.}\label{Subsect: Coupled Constraints Case}
The objective of this section is to demonstrate the behavior of the proposed IPM framework in the \textit{coupled constraints} case. Analogous to the experiments in the previous section, we examine this behavior by \textit{calibrating the strength} of the coupling matrix $A_I$. Specifically, we consider $tA_I$ for $t \in [0,1]$, noting that the case $t=0$ recovers the decoupled constraints setting.

It is important to emphasize that the numerical experiments presented in this section employ initial points generated as random perturbations of the solution of each problem instance. We deliberately select the perturbation magnitude to be $0.3$, which is two orders of magnitude larger than the quantity $a$ defined in \eqref{eq:a} -- in our case, $a \in \mathcal{O}(10^{-3})$. As discussed previously, this quantity provides an estimate for the radius of local convergence of Algorithm~\ref{alg:ipm}. Our choice of the perturbation magnitude aims to demonstrate that this theoretical estimate may be overly conservative for certain problem classes, such as the randomly generated instances considered here with dimensions as in the previous section.

Figure~\ref{fig:convergence_metrics_coupled} presents the convergence behavior of the IPM across this parametrized family of problems. Similarly, Figure~\ref{fig:condition_iterations_coupled} illustrates the iteration count and numerical conditioning of the Jacobian of the stationarity system $D\hat{F}(z^*, \lambda^*, s^*)$, with the upper and lower panels corresponding to the small-scale and larger-scale instances, respectively. The numerical experiments presented here confirm the robustness, stability and scalability observed in the previous case, showcasing, as well, the theoretical local convergence results proved in Subsection \ref{sec:coupled_local_convergence}. {Indeed, Figure~\ref{fig:convergence_metrics_coupled} supports the local convergence result presented in Subsection \ref{sec:coupled_local_convergence}, as Algorithm \ref{alg:ipm} is able to deliver the required accuracy across all the $t$ when initialized sufficiently close to the solution. Figure~\ref{fig:condition_iterations_coupled} showcases, instead, how the number of IPM iterations remains roughly constant when the \textit{strength} of the coupling constraint term increases $(t \to 1)$ while the conditioning of the Jacobian of the stationarity system remains almost unchanged.} 
\begin{figure}[htbp]
\begin{subfigure}[b]{\textwidth}
    \centering
    \includegraphics[width=\textwidth]{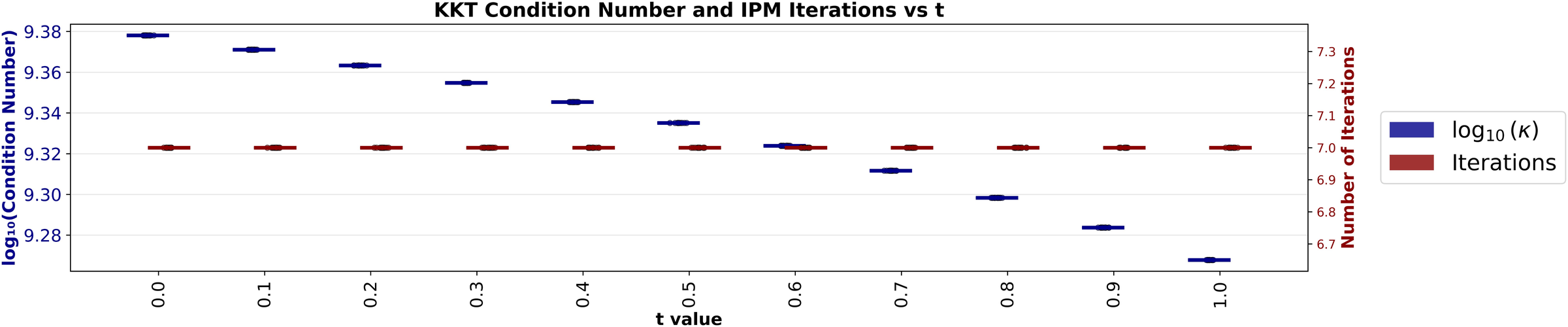} 
        \caption{$n=m=100, \; \; p=q=50$}
    \end{subfigure}
     \begin{subfigure}[b]{\textwidth}
        \centering
       \includegraphics[width=\textwidth]{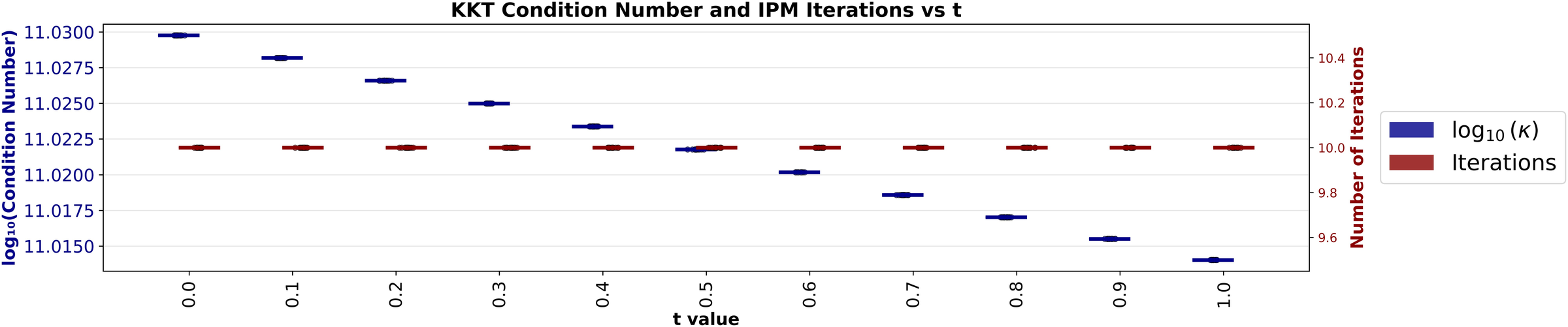} 
        \caption{$n=m=1000, \; \; p=q=500$}
    \end{subfigure}
   \caption{Stationarity matrix condition number and iteration count versus constraints coupling parameter $t$. Grouped boxplots (two per $t$ value) display $\log_{10}(\kappa)$ (blue) and IPM iteration count (red) across $15$ trials. Despite the significantly increasing strength of constraints coupling at $t = 1$, the iteration count remains stable at approximately $20$ iterations.}
    \label{fig:condition_iterations_coupled}
\end{figure}

\subsection{Min cost flow problems and adversarial attacks.}
The aim of this section is to showcase the \textit{quality} of the computed stationary points using our proposed Algorithm \ref{alg:ipm}. For this purpose, we consider a well-established class of adversarial attacks on networks that can be formulated using the \textit{min-max-with-coupled-constraints} analyzed in this work, see \cite{MR4654111,zhang2025iterativeminimaxgamescoupled,hu2024minimizationapproachminimaxoptimization}. Such a problem can be formulated as

\begin{figure}[htbp!]
    \centering
    \begin{subfigure}[b]{\textwidth}
        \centering
        \includegraphics[width=1\textwidth]{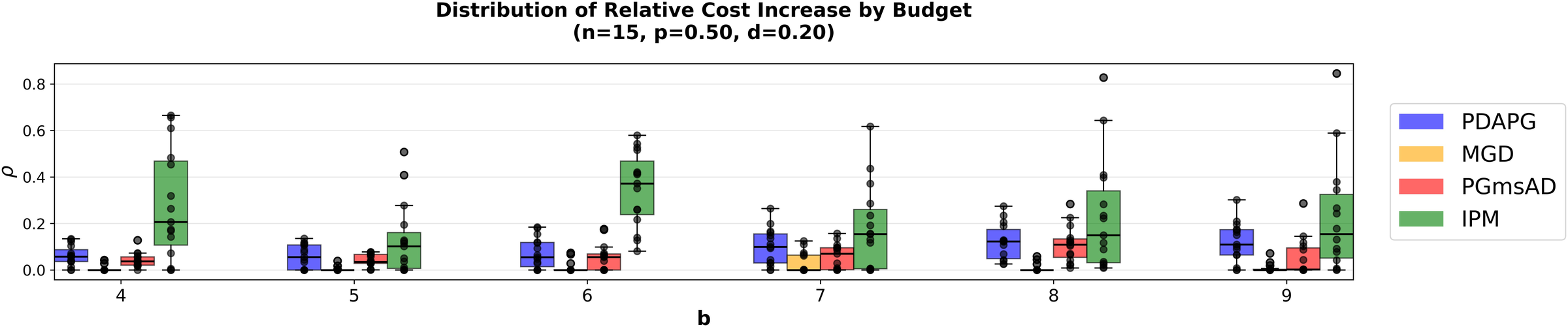} 
        \caption{n=15, p=0.50, d=0.20}

    \end{subfigure} \\
     \begin{subfigure}[b]{\textwidth}
        \centering
       \includegraphics[width=1\textwidth]{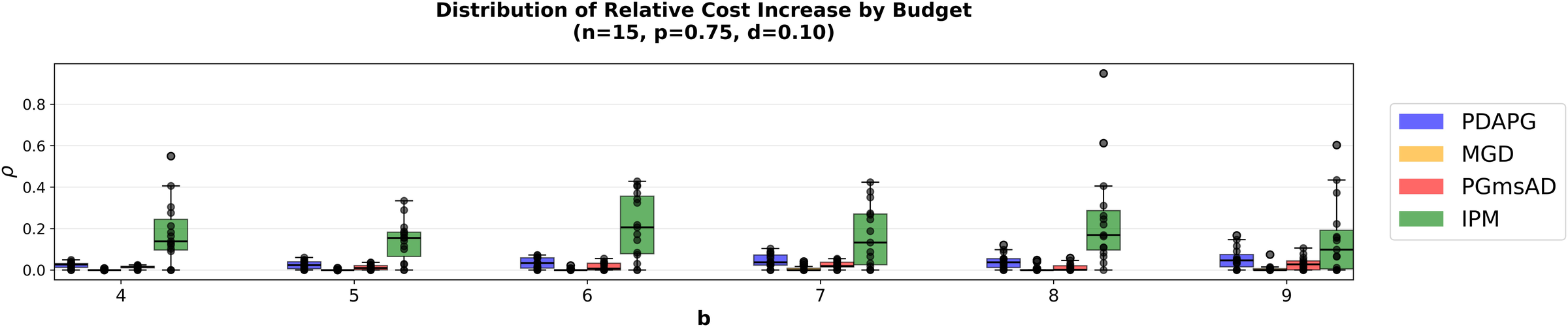} 
        \caption{n=15, p=0.75, d=0.10}

    \end{subfigure}\\
     \begin{subfigure}[b]{\textwidth}
        \centering
        \includegraphics[width=1\textwidth]{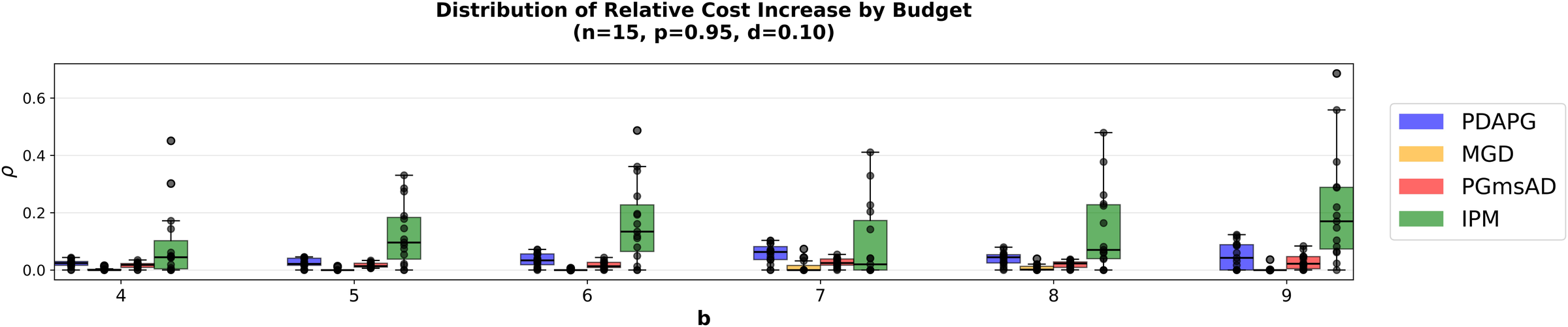}
        \caption{n=15, p=0.95, d=0.10}

    \end{subfigure}\\

       \caption{Comparison of algorithms for network attack: relative cost increase vs. budget parameter.}
    \label{fig:network_attack}
\end{figure}

\begin{align} \label{eq:min_flow_interdiction}
\min_{\substack{(x,z) \geq 0  \\ {e}^\top{x} = a_b}}  \;\; & \max_{\substack{ y \geq 0 }} \sum_{e \in E} - w^\top y - y^\top \diag(w)y - y^\top \diag(w) x + \frac{\eta}{2}\|x\|^2 + \frac{\eta}{2}\|z\|^2 \\
&  {x} + {y} +z = {p}, \nonumber \\
& {\sum_{ e\in E:\, e =(v, t)}}    y_e = r_t,  \nonumber  \\
& P^{s,t}{y} = 0. \nonumber
\end{align}
 The notation details for problem \eqref{eq:min_flow_interdiction} and a detailed problem description are reported Section \ref{sec:adverarial_attack_presentation}. It can be readily seen that problem \eqref{eq:min_flow_interdiction} can be recast in the form \eqref{sec:intro}, observing that the inner can be written as 
\begin{equation}\label{eq:primal_interdiction}
     \begin{bmatrix}
        e^\top & 0 & 0 \\
        0 & 0 &P_{s,t}  \\
        0 & 0 & \hat{e}^\top  \\
        \1 &  \1 & \1 \\
    \end{bmatrix}\begin{bmatrix}
        x \\ z\\ y
    \end{bmatrix}=\begin{bmatrix}
        a_b \\ 0\\ r_t \\ p
    \end{bmatrix},
\end{equation}
i.e., $A_O :=[e^\top,0]$, $b_O:=a_b$, $A_I:=\begin{bmatrix}
    0 & 0 \\ 0 & 0 \\ \1 & \1
\end{bmatrix}$, $B_I:=\begin{bmatrix}
  P_{s,t}  \\
  \hat{e}^\top  \\
 \1  \\
\end{bmatrix}$, and $b_I:=\begin{bmatrix}
    0\\ r_t \\ p
\end{bmatrix}$. Concerning the objective function, we have

\begin{equation*}
    \begin{split}
       &  Q_{11} := \begin{bmatrix}
           \eta I & 0 \\
           0 & \eta I
       \end{bmatrix}, \; Q_{12}:=\begin{bmatrix}
           - \frac{1}{2} \diag(w) \\
           0
       \end{bmatrix},  \;  Q_{22}=:
            \diag(w), \\ 
        & c_x = [0,0],  \; c_y =-w^\top.
    \end{split}
\end{equation*}
Having computed a solution $(x^*,z^*,y^*)$ of problem \eqref{eq:min_flow_interdiction}, we recompute the min-cost flow
\begin{align*}
 \rho_{att}:=\min & \; \sum_{e \in E} c_e(y_e) y_e  \\
\mbox{s.t.} \; & P{y} = b, \\
& 0 \leq  {y} \leq {p}-x^*, \nonumber \\
\end{align*}
and, as a measure of the quality of the computed solution, we consider 
\begin{equation*}
    \rho = \frac{\rho_{att}-\rho_{cf}}{\rho_{cf}},
\end{equation*}
where $\rho_{cf}$ and $\rho_{att}$ are, as defined above, the cost flow before and after the attack. Clearly, bigger $\rho$s correspond to \textit{more effective attacks}. 
{In the following, the total flow $r_t$ is $100*d\%$ of the sum of capacities of the edges exiting the source, with $d$ being a parameter to be chosen, whereas $p$ is the probability that an edge appears in the Erdos-Renyi model random graph. }
In Figure \ref{fig:network_attack}, we report numerical results {for selected $d$ and $p$,} of the approach proposed here, \texttt{IPM}, when compared with \texttt{PGmsAD} and \texttt{MGD}, see \cite{MR4654111}, and \texttt{PDAPG}, see \cite{zhang2025iterativeminimaxgamescoupled}. 
The implementation of the three baseline methods and the selection of hyperparameters are based on the code kindly provided by the authors of \cite{zhang2025iterativeminimaxgamescoupled}. In particular, we choose as regularization parameter $\eta=1e-2$ whereas we redirect the interested reader to \cite{zhang2025iterativeminimaxgamescoupled} for all other details, including how the instances are generated.

The results presented in Figure \ref{fig:network_attack} showcase how Algorithm \ref{alg:ipm}, when applied to solve problem \eqref{eq:min_flow_interdiction}, is able to generate, typically, {more effective} \textit{attacks} {w.r.t. the obtained $\rho$}, than the baseline terms of comparison. Moreover, since Algorithm \ref{alg:ipm} is an infeasible one, in contrast to \texttt{PGmsAD}, \texttt{MGD} and \texttt{PDAPG}, we complement Figure \ref{fig:network_attack} with Figures \ref{fig:network_attack_c_by_budget}, \ref{fig:network_attack_c_by_budget_1} and \ref{fig:network_attack_c_by_budget_2} in Section \ref{sec:adverarial_attack_presentation}, which explore the correlation of $\|r_b\|$, i.e., the residual associated to \eqref{eq:primal_interdiction}, with $\rho$. Such figures clearly show, indeed, how the effectiveness of the attack is not positively correlated with the infeasibility of the computed point and, in general, effective attacks are obtained also when such infeasibility measure is very small.

\section{Conclusions.}
\label{sec:conclusions}
This paper presents a comprehensive framework for computing stationary points of quadratic minmax optimization problems with coupled linear constraints. Our main contributions span theoretical characterizations, algorithmic development, and practical validation.

We developed an infeasible interior point method (Algorithm~\ref{alg:ipm}) that computes stationary points by following a local central path associated with the perturbed stationarity system~\eqref{eq:KKT_final}.{For the important special case of decoupled constraints ($A_I = 0$, cf.\ Remark~\ref{rem:AI=0}), we established polynomial complexity with iteration count $O((n+m)^2|\log(\varepsilon)|)$ to achieve $\varepsilon$-approximate solutions {without assuming strong convexity-concavity (Theorem~\ref{th:polynomial_conv}), as it is the case in the existing literature (see \cite[Section 2]{MR4654111})}. This result demonstrates that the stationary points of minmax problems with decoupled constraints can be computed efficiently.
For the general case with coupled constraints ($A_I \neq 0$), while polynomial complexity cannot be expected due to the inherent non-convexity of the equivalent formulation~\eqref{eq:QPmodel}, we proved local convergence under the assumption that the initial point lies in a neighborhood of a non-degenerate stationary point (Theorem~\ref{th:convergence}).}

Our computational experiments on randomly generated instances demonstrate remarkable robustness across a spectrum of problem structures. The algorithm maintains consistent iteration counts even as strong convexity-concavity properties degrade completely, and as problem dimensions grow. These empirical findings strongly support our polynomial complexity theory for the decoupled case and demonstrate practical local convergence for the coupled case. Applications to adversarial network flow interdiction problems~\eqref{eq:min_flow_interdiction} showcase the algorithm's ability to compute high-quality solutions that significantly increase network costs under optimal attacks.

  {It might be important to note that the quadratic setting of the minmax problem studied in this paper enables the derivation of the necessary optimality conditions of problem \eqref{eq:OuterProblem} for free (cf. Corollary \ref{theo:NecessaryOptCond-withoutCQ}) and construction of the polynomiality results highlighted above. As far as the other main results of the paper  are concerned, they could be extended to more general smooth convex-concave minmax problems under suitable versions of our assumptions. Of course, if nonlinear outer and/or inner constraints are involved in this general settings,  the validity of a constraint qualifications will also become important in the corresponding extensions. Also, in the extension of our methods to such general settings, multivariate first and second order derivatives will be necessary, rather than the more explicit matrix-vector expressions used in this paper.}

\section*{Acknowledgments.} This work was initiated while the 3rd author was visiting the Continuous Optimization Chair at the Institute for Operations Research, Karlsruhe Institute of Technology (KIT), as part of his Alexander von Humboldt Research Fellowship for Experienced Researchers.


\bibliographystyle{siamplain}
\bibliography{references}

\newpage
\appendix


\section{An Alternative Meaningfulness Result}\label{Proof of Theorem Meaningfulness}
%
Here, we present an alternative meaningfulness result to Theorem \ref{the:ndmeaningfulness}, where strict complementarity slackness is not necessary.
The approach is completely different from the one in Subsection \ref{sec:meaningfulness}, and will mainly consists of showing that under appropriate conditions, a stationarity point in the sense of \eqref{eq:KKTjoint} is a strict local optimal solution, in the sense that it satisfies the \textit{second order growth condition}. The second order growth condition will be said to hold at a point $\bar x\in C \subset \mathbb{R}^n$ for an optimization problem \textit{to minimize $\psi(x)$ subject to $x\in C$}, if there exist a neighborhood $U$ of $\bar x$ and a constant $\alpha >0$ such that 
\begin{equation}\label{eq:SecondOrderGrowth}
    \psi(\bar x) + \alpha \|x-\bar x\|^2 \leq \psi(x) \;\, \mbox{ for all }\;\, x\in C\cap U.
\end{equation}

To proceed, we need a few other concepts, including first and second order directional derivatives, as well as the notion of second order epi-regularity. 
The \textit{directional derivative} (in the sense of G\^{a}teaux
) of a function $\psi: \mathbb{R}^n \rightarrow \overline{\mathbb{R}}$ at a point $\bar x \in \mbox{dom }\psi$ in the direction $d\in \mathbb{R}^n$ is the following limit if it exists:
\[
\psi'(\bar x; d)=\underset{t\downarrow 0}{\lim}\frac{\psi(\bar x + td) - \psi(\bar x)}{t}.
\]
Based on this first order notion, we can introduce the second order directional derivative concept that seems to have originated from \cite{BenTal1982NecessaryAS}, and which will play a key role in establishing the results in this section.  If the function $\psi$ is directional differentiable at $\bar x$, its \textit{second order directional derivative} at this point, in the directions $d\in \mathbb{R}^n$ and $w\in \mathbb{R}^n$, is the following limit if it exists:
\[
\psi''(\bar x; d, w)=\underset{t\downarrow 0}{\lim}\frac{\psi(\bar x + td +\frac{1}{2}t^2 w) - \psi(\bar x) - t\psi'(\bar x; d)}{\frac{1}{2}t^2}.
\]
By means of the Taylor expansion of order two, we can easily check that if $\psi$ is twice continuously differentiable at $\bar x$, then we have $\psi''(\bar x; d, w)=\nabla \psi(\bar x)^\top w + d^\top \nabla^2 \psi(\bar x) d$. 

The concept of epi-regularity will be associated to that of second order directional differentiability to generate our main result here. So, we next introduce the former. Let $\psi$ be an extended real-valued function that is second order directionally differentiable at a point $\bar x\in \text{dom} \psi$. It will be said to be \textit{second-order epi-regular} at $\bar x$ in direction $d\in \mathbb{R}^n$ if for each path $w :\mathbb{R}_+ \rightarrow \mathbb{R}^n$ such that $tw(t) \rightarrow 0$  as $t\downarrow 0$, 
\[
\psi\big(\bar x + td + \frac{1}{2}w(t)\big) \geq \psi(\bar x) + t\psi'(\bar x; d) + \frac{1}{2}t^2 \psi''(\bar x; w(t)) + o(t).
\]
The function $\psi$ will be said to be second-order epi-regular at $\bar x$ if it is second-order epi-regular at $\bar x$ in each direction from $\mathbb{R}^n$. 

The following lemma will serve as base to establish the alternative meaningfulness result here.

\begin{lemma}[\cite{bonnans2013perturbation}]\label{Lem:SufficientCond} The second order growth condition holds at $\bar x\in \mbox{dom}\,\psi$ for the minimization of the function $\psi: \mathbb{R}^n \rightarrow \overline{\mathbb{R}}$  if the following conditions hold:
\begin{itemize}
    \item[(i)\;] $\psi$ is \textit{second order directionally differentiable} and \textit{second order epi-regular};
    \item[(ii)\,] $\psi'(\bar x; d)\geq 0$ for all $d\in \mathbb{R}^n$;
    \item[(iii)] $\underset{\omega\in \mathbb{R}^n}{\inf}~\psi''(\bar x; d, \omega) >0$ for all $d\in \left\{\left. r\in \mathbb{R}^n\right|\; \psi'(\bar x; r)=0\right\}$.
\end{itemize}
\end{lemma}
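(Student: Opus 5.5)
We argue by contradiction, since this is the standard second-order sufficiency argument in the Ben-Tal/Bonnans–Shapiro framework. Suppose the growth condition \eqref{eq:SecondOrderGrowth} fails with $C=\mathbb{R}^n$; constraints are assumed encoded in $\psi$ via extended values. Then there is a sequence $x_k\to\bar x$, $x_k\neq\bar x$, with
\[
\psi(x_k)<\psi(\bar x)+\tfrac1k\|x_k-\bar x\|^2 .
\]
Set $t_k:=\|x_k-\bar x\|\downarrow 0$ and $d_k:=(x_k-\bar x)/t_k$. Passing to a subsequence, we may assume $d_k\to d$ with $\|d\|=1$.

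\textbf{Step 1: $d$ is a critical direction.} Write $x_k=\bar x+t_kd+t_k(d_k-d)$, i.e.\ $x_k=\bar x+t_kd+\tfrac12 w(t_k)$ with $w(t_k):=2t_k(d_k-d)$. Then $t_kw(t_k)\to0$, and in fact $w(t_k)/t_k\to0$. Epi-regularity (or directional differentiability along perturbed directions, i.e.\ Hadamard-type behaviour) gives
\[
\psi(x_k)\ge \psi(\bar x)+t_k\psi'(\bar x;d)+o(t_k).
\]
Combining this with the contradiction inequality yields $t_k\psi'(\bar x;d)\le o(t_k)$, hence $\psi'(\bar x;d)\le0$. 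Together with (ii) this forces $\psi'(\bar x;d)=0$, so $d$ lies in the set appearing in (iii).

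\textbf{Step 2: second-order contradiction.} Now use the finer decomposition with the same $w_k:=w(t_k)$, but read it as a second-order path: $x_k=\bar x+t_kd+\tfrac12t_k^2\omega_k$ with $\omega_k:=w(t_k)/t_k^2$. Epi-regularity in direction $d$, used in its intended form $\psi(\bar x+td+\tfrac12t^2\omega(t))\ge\psi(\bar x)+t\psi'(\bar x;d)+\tfrac12t^2\psi''(\bar x;d,\omega(t))+o(t^2)$, together with $\psi'(\bar x;d)=0$, gives
\[
\tfrac12t_k^2\,\psi''(\bar x;d,\omega_k)+o(t_k^2)\le \psi(x_k)-\psi(\bar x)<\tfrac1k t_k^2 .
\]
Dividing by $t_k^2$ and letting $k\to\infty$ gives $\liminf_k\psi''(\bar x;d,\omega_k)\le0$. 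This contradicts (iii), provided the infimum in (iii) is read as giving a uniform positive lower bound $\inf_\omega\psi''(\bar x;d,\omega)=:\beta_d>0$: then $\psi''(\bar x;d,\omega_k)\ge\beta_d$ for all $k$. Hence the growth condition holds.

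\textbf{Main obstacle.} The delicate point is Step 2. The definition of epi-regularity as printed contains typos ($\psi''(\bar x;w(t))$ and the remainder $o(t)$), and we must use the correct form, with $\psi''(\bar x;d,\cdot)$ and remainder $o(t^2)$, with the path scaled as $\tfrac12t^2\omega(t)$ where $t\,\omega(t)\to0$. The condition $t\,\omega(t)\to0$ is satisfied by our $\omega_k$ because $t_k\omega_k=w(t_k)/t_k=2(d_k-d)\to0$. A further subtlety is that the infimum in (iii) is taken at a fixed $d$, while the directions $d_k$ vary. This is handled entirely by the choice of decomposition above: all variation is absorbed into $\omega_k$, so only $\psi''(\bar x;d,\cdot)$ at the limit direction $d$ ever appears, and (iii) applies directly.
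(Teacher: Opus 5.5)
The gap is in Step~1, and it cannot be closed from the hypotheses as written.

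Step~2 is fine. It is the standard argument behind the result of Bonnans--Shapiro, which the paper only cites without proof. You correct the misprints to $\psi(\bar x+td+\tfrac12t^2\omega(t))\ge\psi(\bar x)+t\psi'(\bar x;d)+\tfrac12t^2\psi''(\bar x;d,\omega(t))+o(t^2)$, take the path $\omega_k=2(d_k-d)/t_k$ (extended, say, by $0$ off a strictly decreasing sequence $t_k$), and invoke (iii) only at the limit direction $d$. All of this is right. Note also that (iii) must be read for $d\neq0$: since $\psi''(\bar x;0,\omega)=\psi'(\bar x;\omega)$, one always has $\inf_\omega\psi''(\bar x;0,\omega)\le0$.

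In Step~1 you use the misprinted form of epi-regularity (shift $\tfrac12w(t)$, remainder $o(t)$), which you reject in Step~2. With the correct form and the same path you only obtain
\[
\psi(x_k)\ge\psi(\bar x)+t_k\psi'(\bar x;d)+\tfrac12t_k^2\,\psi''(\bar x;d,\omega_k)+o(t_k^2).
\]
To reach $\psi(x_k)\ge\psi(\bar x)+t_k\psi'(\bar x;d)+o(t_k)$ you would need $\liminf_k t_k\psi''(\bar x;d,\omega_k)\ge0$. This does not follow, for two reasons:
\begin{itemize}
\item[(a)] $\|\omega_k\|$ is merely $o(1/t_k)$ and may be unbounded.
\item[(b)] For a non-critical $d$ --- precisely the case Step~1 must exclude --- nothing in (i)--(iii) bounds $\psi''(\bar x;d,\cdot)$ from below.
\end{itemize}
The ``Hadamard-type behaviour'' you mention in parentheses is therefore an extra hypothesis, not a consequence of (i).

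It cannot be dropped. Let $\bar x=0$, and set $\psi(x)=x_1x_2$ if $x_1,x_2>0$, and $\psi(x)=|x_1|+|x_2|$ otherwise. Then:
\begin{itemize}
\item[(a)] $\psi'(0;d)=0$ on the open quadrant and $\psi'(0;d)=\|d\|_1$ elsewhere, so (ii) holds.
\item[(b)] $\psi''(0;d,\omega)=2d_1d_2$ for $d$ in the open quadrant, so (iii) holds.
\item[(c)] $\psi''(0;(1,0),\omega)=-\infty$ if $\omega_2>0$, and $\omega_1+|\omega_2|$ otherwise; the same holds symmetrically at $(0,1)$. At all other $d$ the second-order directional derivatives are finite, so they exist in $\overline{\mathbb{R}}$ everywhere.
\item[(d)] Epi-regularity holds in every direction. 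It is trivial wherever $\psi''=-\infty$, holds up to $o(t^2)$ elsewhere, and in direction $0$ follows from $\psi\ge0$.
\end{itemize}
Yet $\psi(t(1,\epsilon))=\epsilon t^2$ rules out quadratic growth. Along $x_k=t_k(1,\epsilon_k)$ with $\epsilon_k\downarrow0$, your Step~1 inequality fails, because $\psi'(0;(1,0))=1$. Replacing $x_1x_2$ on the quadrant by $\max\{x_1x_2,\,x_1-x_2/x_1,\,x_2-x_1/x_2\}$ even makes $\psi$ continuous.

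The repair is to add a Hadamard-type first-order property. Two options:
\begin{itemize}
\item[(a)] Assume $\psi$ is Hadamard directionally differentiable at $\bar x$. This is automatic if $\psi$ is locally Lipschitz near $\bar x$, as the max-type function in the proof of Theorem~\ref{theo:PreciseMeaningfulness} is whenever $\phi$ is.
\item[(b)] As in the epiderivative setting of Bonnans--Shapiro, replace $\psi'(\bar x;\cdot)$ throughout by the lower directional epiderivative $\liminf_{t\downarrow0,\,d'\to d}(\psi(\bar x+td')-\psi(\bar x))/t$.
\end{itemize}
Either way, $\psi'(\bar x;d)\le\liminf_k(\psi(x_k)-\psi(\bar x))/t_k\le0$ is immediate, so $d$ is critical, and your Step~2 completes the proof.
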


Before stating our result, we introduce the suitable versions of the inner and outer second order sufficient conditions that will be used here. To proceed, note that the \textit{inner critical cone} at $\bar y$ is defined by
\[
\mathcal{C}_{Y(\bar x)}(\bar y)\;:=\;\left\{d\in \mathbb{R}^m\left|\; \left(Q_{22}\bar y - Q_{12}^\top\bar x - c_y\right)^\top d=0, \;\, B_Id=0, \;\, d_j\geq 0, \;\, j\in \mathcal{I}_{\bar y} \right.\right\}
\]
for a fixed outer variable $x:=\bar x$. Thanks to this notion, we can introduce the alternative inner second order sufficient
condition, which will be said to hold at  $(\bar x, \bar y)$ if 
\begin{equation}\label{eq:SecondOrderCond}\tag{ISOSC'}
 d^\top Q_{22}d >0 \;\, \mbox{ for all } d\in \mathcal{C}_{Y(\bar x)}(\bar y)\setminus\{0 \}.
\end{equation}
It is well-known that under the \eqref{eq:SecondOrderCond}, it holds that $\{\bar y\}=S(\bar x)$. Subsequently, under the fulfillment of the \eqref{ILICQ} and \eqref{eq:SecondOrderCond} at $\bar y$ and $(\bar x, \bar y)$ with $\{\bar y\}=S(\bar x)$, respectively, we can introduce the 
outer critical cone 
 \begin{equation}\label{eq:Outer-Critical-Cone}
\begin{array}{rcl}
\mathcal{C}_X(\bar x) &:=&\left\{d\in \mathbb{R}^n\left|\; \left(Q_{11}\bar x + Q_{12}\bar y + c_x + A^\top_I \lambda_I \right)^\top d =  0,\right.\right.\\[1ex]
  & & \qquad \qquad \qquad \qquad \qquad \qquad  A_Od=0, \;\, d_i\geq 0, \;\, i\in \mathcal{I}_{\bar x} \Big\},
\end{array}
 \end{equation}
where  $\lambda_I$ is such that the pair $(\lambda_I, s_I)$ is the unique Lagrange multiplier pair corresponding to the point $(\bar x, \bar y)$ with $\{\bar y\}=S(\bar x)$. Analogously, based on the cone \eqref{eq:Outer-Critical-Cone}, we can introduce the outer second order sufficient
condition to be used here, which will be said to hold at  $(\bar x, \bar y)$ if 
\begin{equation}\label{eq:OuterSecondOrderCond}\tag{OSOSC'}
\forall d\in \mathcal{C}_X(\bar x)\setminus \{0\}: \quad \underset{e}{\sup}\left\{\left[d^\top \; e^\top\right] Q \left[\begin{array}{c}
     d\\ e
\end{array} \right]\\ \left|\;\;\,\begin{array}{l}
          e_j =0,\;\; j\in \mathcal{I}_{\bar y}, \;({{s_I}})_j>0\\[1ex]
          e_j\geq  0, \;\; j\in \mathcal{I}_{\bar y}, \; ({{s_I}})_j=0\\[1ex]
            A_Id + B_Ie =0  
         \end{array}  \right.\right\} >0.
\end{equation}

We are now ready to state and prove our alternative meaningfulness result, in the absence of the strict complementarity slackness. 
\begin{theorem}\label{theo:PreciseMeaningfulness}
Let $\bar x$ be a stationary point in the sense of Definition \ref{Def:StationarityConcept} such that the corresponding pair $(\bar x, \bar y)$ with $\bar y\in S(\bar x)$ satisfies  \eqref{ILICQ}, \eqref{eq:SecondOrderCond}, \eqref{OLICQ}, and  \eqref{eq:OuterSecondOrderCond}. 
 Then $\bar x$ is a strict local optimal solution for problem \eqref{eq:OuterProblem}. 
\end{theorem}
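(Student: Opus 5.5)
We intend to apply Lemma~\ref{Lem:SufficientCond} to $\psi:=\varphi+\iota_X$, where $\iota_X$ denotes the indicator function of $X$. Once the second order growth condition \eqref{eq:SecondOrderGrowth} holds for $\psi$ at $\bar x$, $\bar x$ is a strict local minimizer of \eqref{eq:OuterProblem}.

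The first step is a sensitivity analysis of the parametric QP \eqref{eq:InnerProblem}; no strict complementarity is used here. Condition \eqref{eq:SecondOrderCond} gives $S(\bar x)=\{\bar y\}$. Condition \eqref{ILICQ} gives a unique multiplier pair $(\bar\lambda_I,\bar s_I)$, so $\Lambda(\bar x,\bar y)$ is a singleton. With these, we invoke the standard directional differentiability theory for optimal value functions of parametric programs with linear constraints and a uniform quadratic growth condition (Bonnans--Shapiro, Ch.~4). This yields $\varphi'(\bar x;d)=\nabla_xL_I(\bar x,\bar y,\bar\lambda_I,\bar s_I)^\top d=(Q_{11}\bar x+Q_{12}\bar y+c_x+A_I^\top\bar\lambda_I)^\top d$. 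It also yields a second order expansion: $\varphi''(\bar x;d,w)$ equals the first order term in $w$ plus the value of the auxiliary problem
\[
\sup_e\Big\{[d^\top\;e^\top]\,Q\,\big[\begin{smallmatrix}d\\ e\end{smallmatrix}\big]\ :\ A_Id+B_Ie=0,\ e_j\ge 0\ (j\in\mathcal I_{\bar y},(\bar s_I)_j=0),\ e_j=0\ (j\in\mathcal I_{\bar y},(\bar s_I)_j>0)\Big\}.
\]
The feasible set here is the critical cone of the inner problem in the joint direction $(d,e)$. Because all data are quadratic and polyhedral, the Taylor expansions are exact up to $o(t^2)$ terms. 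The same expansion, holding along every path $w(t)$ with $tw(t)\to0$, gives second order epi-regularity of $\varphi$. The indicator $\iota_X$ of a polyhedron is second order regular, and adding it keeps epi-regularity. Altogether this verifies condition (i) of Lemma~\ref{Lem:SufficientCond}.

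The second step checks condition (ii). For $d\notin T_X(\bar x)$ we have $\psi'(\bar x;d)=+\infty$. For $d$ in the tangent cone ($A_Od=0$, $d_i\ge0$ for $i\in\mathcal I_{\bar x}$), the outer KKT system in \eqref{eq:KKTjoint} gives $\nabla\varphi$-term$^\top d=(\bar s_O-A_O^\top\bar\lambda_O)^\top d=\bar s_O^\top d\ge0$. The directions with $\psi'(\bar x;d)=0$ are therefore exactly the cone $\mathcal C_X(\bar x)$ from \eqref{eq:Outer-Critical-Cone}.

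The third step checks condition (iii) for $d\in\mathcal C_X(\bar x)\setminus\{0\}$. We take the infimum over $w$ of $\psi''(\bar x;d,w)$. The terms linear in $w$ reduce, via the second order tangent set of the polyhedron $X$, to $\bar s_O^\top w\ge0$ on admissible $w$, with value $0$ attainable. What remains is the sup-expression above, which is positive by \eqref{eq:OuterSecondOrderCond}. The choice of sign convention has to be checked carefully, since $\varphi$ is a max and the curvature enters through $Q$ with $-Q_{22}$. Condition \eqref{OLICQ} ensures uniqueness of the outer multipliers and the description of the tangent sets.

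\textbf{Main obstacle.} The main difficulty is the first step: deriving the exact second order directional derivative and the epi-regularity of $\varphi$ without strict complementarity. There the critical cone of the inner problem varies with $d$, so the inner problem is no longer locally smooth in $x$ (unlike the implicit function argument used for Theorem~\ref{the:ndmeaningfulness}). We would first try to quote the Bonnans--Shapiro theorem for value functions under ILICQ and a second order sufficient condition. The fallback is a direct argument: take $y(t)\in S(\bar x+td+\tfrac12t^2w)$, show $\|y(t)-\bar y\|=O(t)$ via \eqref{eq:SecondOrderCond} and Hoffman's lemma, and obtain matching upper and lower bounds by inserting feasible inner points $\bar y+te+O(t^2)$.
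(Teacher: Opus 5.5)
Your proof is correct. It shares the paper's skeleton: apply Lemma~\ref{Lem:SufficientCond} to a reformulation without explicit constraints, take $\varphi'$, $\varphi''$ and epi-regularity from value-function sensitivity, use the outer KKT system for (ii), and use \eqref{eq:OuterSecondOrderCond} for (iii). For the value-function step the paper gives no proof either; it simply cites \cite[Proposition 4]{mehlitz2021sufficient}.

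The genuine difference is the reformulation.

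\textbf{The paper's choice.} It uses the finite-valued max-type penalty $\psi=\max\{\phi-\phi(\bar x),-x_i,\pm((a^j_O)^\top x-b_j)\}$. This keeps $\psi$ real-valued, and $\psi'$, $\psi''$ follow from the calculus of pointwise maxima. The cost is that $\inf_\omega\psi''(\bar x;d,\omega)$ becomes a small LP, evaluated by duality, with \eqref{OLICQ} invoked to force $\kappa\neq0$. It also needs a separate argument that growth of the penalty implies growth on $X$.

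\textbf{Your choice.} $\psi=\varphi+\iota_X$ needs extended-valued calculus instead: the exact second-order tangent set of the polyhedron and the sum rule for epi-regularity. After that, (iii) is a one-line computation. On $T^2_X(\bar x,d)$ one has $\nabla\varphi(\bar x)^\top w=\bar s_O^\top w\ge0$, because $(\bar s_O)_i>0$ forces $d_i=0$ for $d\in\mathcal C_X(\bar x)$.

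\textbf{\eqref{OLICQ} is unused in your route.} Tangent and second-order tangent sets of polyhedra need no constraint qualification, and any outer multiplier pair from \eqref{eq:KKTjoint} works. So the role you assign to \eqref{OLICQ} can simply be dropped.

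\textbf{On your ``main obstacle''.} Sufficiency only uses the \emph{lower} estimate of $\varphi$, and your fallback already delivers it.
\begin{itemize}
\item Let $x_t=\bar x+td+\tfrac12t^2w(t)$ and let $e$ be admissible in the supremum of \eqref{eq:OuterSecondOrderCond}.
\item Choose $c(t)$ with $B_Ic(t)=-A_Iw(t)$, $c_j(t)=0$ for $j\in\mathcal I_{\bar y}$, and $\|c(t)\|=O(\|w(t)\|)$. This is possible by \eqref{ILICQ}.
\item Then $\bar y+te+\tfrac12t^2c(t)\in Y(x_t)$ for small $t$.
\item Using $\nabla_yf(\bar x,\bar y)=-B_I^\top\bar\lambda_I-\bar s_I$ together with $\bar s_I^\top e=\bar s_I^\top c(t)=0$, one gets
$\varphi(x_t)\ge\varphi(\bar x)+t\nabla\varphi(\bar x)^\top d+\tfrac12t^2\bigl(\nabla\varphi(\bar x)^\top w(t)+[d^\top\ e^\top]\,Q\,[d^\top\ e^\top]^\top\bigr)+o(t^2)$.
\end{itemize}
This also settles your sign question. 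Running the growth argument directly with this bound, rather than quoting the lemma, bypasses the upper estimate and the stability analysis of inner solutions altogether.
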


\begin{proof}
The assertion follows from the second-order growth condition at $\bar x$ for problem \eqref{eq:MinMax}. The proof will first ensure that the latter follows from the second-order growth condition at $\bar x$ for the auxiliary unconstrained optimization problem
    \begin{equation}\label{UCO}
    \begin{array}{rcl}
         \underset{x\in \mathbb{R}^n}\min~\psi(x) &:=&\max\left\{\phi(x)-\phi(\bar x),\right.\\[1ex]
                      &  & \left. -x_1, \ldots, -x_n,\; \pm\left[(a^1_O)^\top x-b_1\right], \ldots, \pm\left[(a^p_O)^\top x-b_p\right]\right\}.
    \end{array}
    \end{equation}
Subsequently, the rest of the proof will consists of showing that the requirements for the application of Lemma \ref{Lem:SufficientCond} to the function $\psi$ \eqref{UCO} are satisfied. 
Note that $\psi$ is an extended real-valued function with $\phi$ defined by
\begin{equation}\label{eq:phi_sup}
  \phi(x):=\underset{y\in Y(x)}{\sup}f(x,y) \;\mbox{ for all }\; x\in \mathbb{R}^n.   
\end{equation}
This obviously means that $\phi(x)=\varphi(x)$ for all $x\in X$ considering assumption  \eqref{ass:NonEmptyness}.
Also note that for a vector $a$, we use the notation $\pm a:=\{+a, -a\}$. 

Now, assuming that the second order growth condition holds for  problem \eqref{UCO} at the point $\bar x$, we can find $\epsilon >0$ and $\alpha >0$ such that 
  \begin{equation}\label{growth cond for UCO}
      \psi(x) \geq \alpha \|x-\bar x \|^2 \;\;\mbox{ for all }\;\, x\in \mathbb{B}_\epsilon(\bar x),
  \end{equation}
  considering the fact that $\psi(\bar x) =0$, as $\bar x\in X$. Suppose for a moment now that the second order growth condition does not hold at $\bar x$ for problem \eqref{eq:MinMax}; i.e., for the problem
    \begin{equation}\label{CO}
        \underset{x\in X}{\min}~\varphi(x),
    \end{equation}
where $\varphi$ is defined in \eqref{VarPhi}. 
This implies that we can find a sequence $\{x^k\}_{k\in \mathbb{N}} \subseteq X$ converging to $\bar x$ such that we have  
\[
\varphi(x^k) < \varphi(\bar x) + \frac{1}{k}\|x^k - \bar x\|^2\;\;\mbox{ for all }\;\, k\in \mathbb{N}.
\]
Subsequently, this leads to a contradiction of condition \eqref{growth cond for UCO}, given that
\[
\psi(x^k) \leq \max\left\{\varphi(x^k)-\varphi(\bar x), \;\, 0\right\} < \frac{1}{k}\|x^k - \bar x\|^2\;\;\mbox{ for all }\;\, k\in \mathbb{N}.
\]
In other words, this means that to show that the point $\bar x$ is a strict local optimal solution of problem \eqref{eq:MinMax} in the sense of \eqref{def:localOptSol}, it will suffice to show that problem \eqref{UCO} satisfies the second order growth condition at this point. To prove this, it suffices to show that the properties (i)--(iii) in Lemma \ref{Lem:SufficientCond} hold.

Next, observe that the function $\phi$ \eqref{eq:phi_sup} can be written as $\phi(x):=\tilde{\phi}(x) - \hat{\phi}(x)$ for all $\mathbb{R}^n$, where 
\begin{equation}\label{eq:VarPhi_Decomp_OK}
    \left\{\begin{array}{lll}
      \tilde{\phi}(x)&:=& \frac{1}{2}x^\top Q_{11}x + c^\top_x x, \\[1ex]
       \hat{\phi}(x)&:=&\underset{y\in Y(x)}{\inf} \,\frac{1}{2}y^\top Q_{22} y- x^\top Q^\top_{12} y - c^\top_y y.
    \end{array}\right.
\end{equation}
Hence, based on \cite[Proposition 4]{mehlitz2021sufficient}, it follows from the fulfillment of \eqref{ILICQ} and \eqref{eq:SecondOrderCond} that $\phi$ is continuously differentiable at $\bar x$ with 
\begin{equation}\label{eq:NablaPhi}
   \nabla \phi(\bar x) = Q_{11}\bar x + Q_{12}\bar y + c_x + A^\top_I \lambda_I,
\end{equation}
where $\lambda_I$ is the unique (thanks to the fulfillment of \eqref{ILICQ}) Lagrange multiplier associated to the inner equality constraint, and moreover, $\hat{\phi}$ and $-\hat{\phi}$ are second order directionally differentiable and epi-regular at $\bar x$, and we have 
\[     
\hat{\phi}''(\bar x; d,\omega) =  \left(\lambda^\top_IA_I - \bar{y}^\top Q_{12}^\top\right) \omega - \underset{e}{\sup}\left\{\left.\zeta^\top \hat{Q}\zeta\; \right|\;\;\, e\in \mathcal{Z}_d(\bar x, \bar y, {{s_I}}, \lambda_I)  \right\},
\]
where $\{({{s_I}}, \lambda_I)\}=\Lambda (\bar x, \bar y)$ thanks to \eqref{ILICQ},  while $\zeta := (d, e)\in \mathbb{R}^n\times \mathbb{R}^m$ and the matrix $\hat{Q}$ and set  $\mathcal{Z}_d(\bar x, \bar y, {{s_I}}, \lambda_I)$ are respectively defined by 
 \[
\hat{Q} :=\left[\begin{array}{lr}
  O  &  Q_{12}\\
   Q_{12}^\top  & -Q_{22}
\end{array} \right] \, \mbox{ and }\,  \mathcal{Z}_d(\bar x, \bar y, {{s_I}}, \lambda_I) := \left\{e \left|\begin{array}{l}
          e_j =0,\;\; j\in \mathcal{I}_{\bar y}, \;(s_I)_j>0\\[1ex]
          e_j\geq  0, \;\; j\in \mathcal{I}_{\bar y}, \; (s_I)_j=0\\[1ex]
            A_Id + B_Ie =0  
         \end{array}  \right.\right\}.
\]
And subsequently, considering the expression of $\phi$ as decomposed in \eqref{eq:VarPhi_Decomp_OK}, 
\begin{equation}\label{eq:Varphi2Primes}
    \begin{array}{rcl}
\phi''(\bar x; d,\omega) &=&  d^\top Q_{11}d  + \bar{z}^\top \bar{Q}\omega + \underset{e}{\sup}\left\{\left.\zeta^\top \hat{Q}\zeta\; \right|\;\;\, e\in \mathcal{Z}_d(\bar x, \bar y, {{s_I}}, \lambda_I)  \right\}\\[2ex]
& = & \bar{z}^\top \bar{Q}\omega + \underset{e}{\sup}\left\{\left.\zeta^\top Q\zeta\; \right|\;\;\, e\in \mathcal{Z}_d(\bar x, \bar y, {{s_I}}, \lambda_I)  \right\},
\end{array}
\end{equation}
where $\bar{z}^\top$ and $\bar{Q}$ are respectively defined by 
\[
\bar{z}^\top := \left[\bar{x}^\top, \; \bar{y}^\top, \; \lambda^\top_I, \; 1\right] \;\mbox{ and }\;  \bar{Q}:=\left[Q_{11} \;\; Q_{12} \;\; -A^\top_I \;\; c_x\right]^\top.
\]

To prove the fulfillment of condition (ii) in Lemma~\ref{Lem:SufficientCond}, we will show that the stationarity of the point $\bar x$ ensures  
\begin{equation}\label{eq:varphi-prime}
     \phi'(\bar x; d) \geq 0 \;\, \mbox{ for all }\;\, d\in \mathcal{L}_X(\bar x),
\end{equation}
where $\mathcal{L}_X(\bar x)$ corresponds to the linearized cone of $X$ at the point $\bar x\in X$:
\begin{equation}\label{eq:LinearizedCone}
    \mathcal{L}_X(\bar x):=\left\{d\in \mathbb{R}^n|\;\, A_Od=0, \;\, d_i\geq 0, \;\, i\in \mathcal{I}_{\bar x} \right\}.
\end{equation}
To see why the fulfillment of  \eqref{eq:varphi-prime} ensures (ii) for $\psi$, observe that for all $d\in \mathbb{R}^n$, 
\begin{equation}\label{eq:Kat-1}
\begin{array}{rcl}
  \psi'(\bar x; d) &=& \max\left\{\phi'(\bar x; d), \;\, \max\left\{-d_i|\;\, i\in \mathcal{I}_{\bar x}\right\},\right.\\[1.2ex]
  & & \qquad \qquad \qquad \qquad\qquad  \left.\pm(a^1_O)^\top d, \ldots, \pm(a^p_O)^\top d\right\},
\end{array}
\end{equation}
while accounting for the fact that since $\phi$ is continuously differentiable at $\bar x$, $\phi'(\bar x; d)$ exists and based on \eqref{eq:NablaPhi}, we have 
\begin{equation}\label{eq:VarPhiPrime}
    \phi'(\bar x; d) =\left(Q_{11}\bar x + Q_{12}\bar y + c_x + A^\top_I \lambda_I \right)^\top d.
\end{equation}
It therefore follows that based on the formula \eqref{eq:Kat-1}, if condition \eqref{eq:varphi-prime} is satisfied, then assumption (ii) in Lemma \ref{Lem:SufficientCond} holds. 

Now, observe that based on \eqref{eq:VarPhiPrime}, we have $\phi'(\bar x; 0)=0$. Additionally, with the  linearized cone $\mathcal{L}_X(\bar x)$ being convex, condition \eqref{eq:varphi-prime} is equivalent to having $0$ as an optimal solution to problem 
\begin{equation}\label{eq:OptCond-0-d}
    \underset{d}{\min}~\phi'(\bar x; d) \; \mbox{ s.t. } \; d\in \mathcal{L}_X(\bar x),
\end{equation}
which is a linear program, considering the expressions in \eqref{eq:LinearizedCone} and \eqref{eq:VarPhiPrime}. Hence, the  $0$ being an optimal solution of problem \eqref{eq:OptCond-0-d} is equivalent to the existence of  $(\lambda_O,\, s_O)\in\R^p\times \R^n$ such that 
\begin{equation}\label{eq:EqLabel-0-R}
    \begin{array}{r}
         \nabla\phi(\bar x)+A_O^\top\lambda_O-(s_O)_{{\cal I}_{\bar x}}=0,\\[1ex]
         (s_O)_i \geq 0, \;\, \in {\cal I}_{\bar x},
    \end{array}
\end{equation}
while taking into account that $\nabla_d \phi'(\bar x; 0) = \nabla \phi (\bar x)$. 
Since $\bar x$ is a stationary point in the sense of Definition \ref{Def:StationarityConcept}, it follows that from the \textit{outer KKT system} component of  \eqref{eq:KKTjoint} and the formula in \eqref{eq:NablaPhi} that the condition \eqref{eq:EqLabel-0-R} is satisfied. Therefore \eqref{eq:OptCond-0-d} holds, and hence, \eqref{eq:varphi-prime} is satisfied. So, condition (ii) in Lemma \ref{Lem:SufficientCond} holds. 

Finally, to establish the fulfillment of assumption (iii) in Lemma \ref{Lem:SufficientCond}, start by observing that based on  \eqref{eq:VarPhiPrime}, we can easily check that the cone  \eqref{eq:Outer-Critical-Cone} can be rewritten as 
\[
\mathcal{C}_X(\bar x) = \left\{d\left.\in \mathbb{R}^n\right|\;\,\psi'(\bar x; d) = 0\right\}.
\]
Now, let $d\in \mathcal{C}_X(\bar x)$. Then considering \eqref{eq:VarPhiPrime} and the formula \eqref{eq:Outer-Critical-Cone} it holds that $\phi'(\bar x; d)=0$. Hence, for all $\omega\in \mathbb{R}^n$,
\[
\begin{array}{rll}
 \psi''(\bar x; d, \omega) &=& \max\left\{\phi''(\bar x; d,\omega), \;\,\left\{-\omega_i,\;\, i\in \mathcal{I}_{\bar x}(d)\right\} \right.,\\[1ex]
                           & &   \qquad \qquad \qquad \;\, \left.\left\{ \pm(a^k_O)^\top w,\;\, j=1, \ldots, q \right\}\right\}
\end{array}
\]
with the expression of $\phi''(\bar x; d,\omega)$ given in \eqref{eq:Varphi2Primes}, 
while  $\mathcal{I}_{\bar x}(d)$ is given by 
\[
\mathcal{I}_{\bar x}(d):=\left\{i\in \mathcal{I}_{\bar x}\left|\;\,d_i=0\right.\right\}.
\]
Based on this formula, \eqref{eq:OuterSecondOrderCond} ensures that (iii) in Lemma \ref{Lem:SufficientCond} is satisfied if  
\begin{equation}\label{eq:SCond}
    \underset{\omega,\, \alpha}{\min}\left\{\alpha \left|\begin{array}{rl}
        \phi''(\bar x; d, \omega)\leq \alpha &  \\[1ex]
        -w_i \leq \alpha, & i\in \mathcal{I}_{\bar x}(d) \\[1ex]
        \pm(a^k_O)^\top w\leq \alpha, & j=1, \ldots, q
    \end{array}  \right.\right\} > 0 \;\mbox{ for all }\; d\in \mathcal{C}_X(\bar x)\setminus \{0\}
\end{equation}
with $a^k_O$ denotes the $j^{\text{th}}$ row of the matrix $A_O$.

Based on the formula of $\phi''(\bar x; d,\omega)$ given in \eqref{eq:Varphi2Primes}, condition \eqref{eq:SCond} reduces to
\begin{equation}\label{eq:SCond-NoVel}
       \underset{\omega,\, \alpha}{\min}\left\{\alpha \left|\begin{array}{rll}
       \alpha - \bar{z}^\top \bar{Q}\omega  &\geq &  \rho_d(\bar x, \bar y, {{s_I}}, \lambda_I)    \\[1ex]
     \alpha + e^\top_i\omega & \geq &  0,\;\;  i\in \mathcal{I}_{\bar x}(d) \\[1ex]
     \alpha -   (a^k_O)^\top w & \geq & 0,\;\; j=1, \ldots, q\\[1ex]
     \alpha +  (a^k_O)^\top w & \geq & 0, \;\; j=1, \ldots, q
    \end{array}  \right.\right\} > 0,
\end{equation}
where we have 
\[
\rho_d(\bar x, \bar y, {{s_I}}, \lambda_I):=\underset{e}{\sup}\left\{\left.\zeta^\top Q\zeta\; \right|\;\;\, e\in \mathcal{Z}_d(\bar x, \bar y, {{s_I}}, \lambda_I)  \right\}.
\]
The dual of the linear program in the left-hand-side of equation \eqref{eq:SCond-NoVel} is
\begin{equation}\label{eq:DualPb-1}
    \underset{\kappa, {{s_O}}, \lambda_O}{\max}\left\{\left.\kappa \rho_d(\bar x, \bar y, {{s_I}}, \lambda_I)\right|\;\, (\kappa, {{s_O}}, \lambda_O)\in  \Delta_d(\lambda_I)\right\}
\end{equation}
with the set $\Delta_d(\lambda_I)$ defined by
\begin{equation}\label{eq:SCond-2-1}
      \Delta_d(\lambda_I):= \left\{(\kappa, {{s_O}}, \lambda_O) \left|\begin{array}{l}
       \kappa\bar{Q}^\top \bar{z} - \underset{{i\in \mathcal{I}_{\bar x}(d)}}{\sum} ({{s_O}})_ie^i_n - \underset{{j=1}}{\overset{q}{\sum}} (\lambda_O)_j a^k_O  =0\\[1.7ex]
       \kappa  + \underset{{i\in \mathcal{I}_{\bar x}(d)}}{\sum} ({{s_O}})_i + \underset{{j=1}}{\overset{q}{\sum}} (\lambda_O)_j =1\\[1.7ex]
       \kappa \geq 0, \;\; ({{s_O}})_i \geq 0, \;\; i\in \mathcal{I}_{\bar x}(d),\;\; \lambda_O\in \mathbb{R}^q
    \end{array}  \right.\right\}.
\end{equation}
Considering the fulfillment of \eqref{OLICQ} at $\bar x$, we must have $\kappa \neq 0$ and hence, we get the result from the combination of condition \eqref{eq:SCond-NoVel} and the dual of the left-hand-side linear program, which is provided in \eqref{eq:DualPb-1}--\eqref{eq:SCond-2-1}. 
\end{proof}

\section{Proof of Theorem \ref{Th:IFT}} \label{sec:Implicit Function Theorem}
    Consider $(v,\mu) \in B({v^*},\delta) \times \Lambda_{\bar{\mu}} \subset V_{\delta}$ and define the function 
    \[
    \Theta_{\mu}(v):= v - \partial_{v}H(v^*,0)^{-1} H(v,\mu).
    \]
Clearly, we have $\Theta_{\mu}(v)=v$ if and only if $H(v,\mu)=0$. Moreover,

\begin{equation}\label{eq:support_1}
\begin{split}
   & \|\Theta_{\mu}v^*- v^*\| =\|\Theta_{\mu}v^*- \Theta_{0}v^*\| \leq M\| H(v^*,\mu)- H(v^*,0)\| \leq MB_{\delta } \bar{\mu}.
\end{split}
\end{equation}

We want to use the fixed point theorem to prove that for all $\mu \in \Lambda_{\bar{\mu}}$, there exists $v \in \overline{B({v^*}, \delta)}$ such that $\Theta_\mu (v)=v$. To proceed, we need to show the following: 
\begin{enumerate}
    \item $\Theta_{\mu}(\overline{B({v^*},\delta)}) \subset \overline{B({v^*},\delta)}$; 
    \item $\exists \; \sigma \in (0,1)$:\;\, $\|\Theta_\mu (v_1)-\Theta_\mu(v_2) \| \leq \sigma \|v_1-v_2\|$ for all $v_1,\;v_2 \in \overline{B({v^*},\delta)}$.
\end{enumerate}
For the first assertion, observe that for $v \in \overline{B({v^*},\delta)}$, we have
\[
\begin{array}{rll}
    \|\Theta_{\mu}(v)-v^*\| & \leq & \|\Theta_{\mu}(v) -\Theta_\mu(v^*)\| + \| \Theta_\mu(v^*) -v^*\| \\[1.5ex]
                            & \leq &  \sup_{(v,\mu) \in V_{\delta}}\|\partial_v \Theta_\mu(v)\|\|v-v^*\|+ MB_\delta \bar{\mu} \\[1.5ex]
                            & \leq & \frac{1}{2}\|v-v^*\| + \frac{1}{2} \delta\\[1.5ex]
                            & \leq & \delta,
\end{array}
\]
where the last but one inequality follows by the definition of $V_{\delta}$, by \eqref{eq:support_1} and the definition of $\bar{\mu}$. To prove 2., observe that $\sigma=1/2$ since
\begin{equation*}
    \begin{split}
    & \|\Theta_\mu (v_1)-\Theta_\mu(v_2)  \| \leq  \frac{1}{2}\|v_1-v_2\|,
    \end{split}
\end{equation*}
using again the definition of $V_{\delta}$. Hence, the function 
$g:\Lambda_{\bar{\mu}}  \rightarrow \overline{B({v^*},\delta)} \subset \mathbb{R}^N$ defined by $g(\mu):= v_\mu $
such that $\Theta_\mu(v_\mu)=v_{\mu}$ and $H(g(\mu),\mu)=0$ is well defined. To prove regularity, observe that for $\mu_1,\mu_2 \in \Lambda_{\bar{\mu}}$, we have
\[
\begin{array}{rll}
    \|g(\mu_1)-g(\mu_2)\| & = & \|\Theta_{\mu_1}(v_{\mu_1}) - \Theta_{\mu_2}(v_{\mu_2}) \|\\[1.5ex]
                          & = & \|\Theta_{\mu_1}(v_{\mu_1}) - \Theta_{\mu_1}(v_{\mu_2}) +  \Theta_{\mu_1}(v_{\mu_2}) -\Theta_{\mu_2}(v_{\mu_2})\|\\[1.5ex]
                          &\leq & \frac{1}{2}\| v_{\mu_1}-v_{\mu_2} \| +  M\| H(v_{\mu_2},\mu_1)- H(v_{\mu_2},\mu_2)\| \\[1.5ex]
                          & \leq & \frac{1}{2}\|g(\mu_1)-g(\mu_2)\|+MB_{\delta}|\mu_1-\mu_2|
\end{array}
\]
from which Lipschitz continuity  of $g$ on $\Lambda_{\bar{\mu}}$ follows. By the differentiability of $\hat{F}$ and the Lipschitz-continuity of $g$, given $\mu \in \Lambda_{\bar{\mu}}$ and $h$ sufficiently small, we have that
\[
\begin{array}{l}
   \left\|H(g(\mu+h),\mu+h) - H(g(\mu),\mu) \right. \\[1.5ex]
   \qquad \qquad \qquad 
   +\;\partial_v H(g(\mu),\mu)\left[g(\mu+h)-g(\mu)\right] \;+\; \left.\partial_\mu H(g(\mu),\mu)h\right\| = o(|h|), 
\end{array}
\]
which implies that 
\begin{equation*}
\|\partial_v H(g(\mu),\mu)[g(\mu+h)-g(\mu)] + \partial_\mu H(g(\mu),\mu)h\| = o(|h|)
\end{equation*}
given that $H(g(\mu+h),\mu+h) = H(g(\mu),\mu) =0$. Hence, 
\begin{equation*}
    \lim_{|h| \to 0 } \|g(\mu+h)-g(\mu) + \partial_v H(g(\mu),\mu)^{-1} \partial_\mu H(g(\mu),\mu)h\| =0,
\end{equation*}
 and the continuity of  $\frac{d}{d\mu}g(\lambda)$ follows from the given  expression of the derivative. 

\section{Proof of Theorem \ref{th:polynomial_conv}} \label{sec:appendix_AI=0}
The proof of Theorem \ref{th:polynomial_conv} follows closely the one from  \cite[Theorem 6.2]{MR1422257} and using Lemma \ref{lme:crucial_lemma}.  For the sake of completeness, in the remainder of this section, we will outline the main steps. Before, observe that defining $\nu_k:=\prod_{j=0}^{k-1} (1-\alpha_j)$  with $\nu_0=1$, and using the linearity of  the first two equations in  \eqref{eq:KKT_final}, we have that 
\begin{equation*}
	(r_b^k, r_c^k) = \nu_k (r_b^0, r_c^0).
\end{equation*}
Coming back to the Polynomial Convergence proof, let us start observing that also in the case here considered the following estimations for $\|(z^k,s^k)\|_1 $ are true:

\vspace{0.2cm}

\begin{enumerate}[({E}1)]
	\item  There exists a constant $C_1$ such that 
    \[
    \nu_k \|(z^k,s^k)\|_1 \leq C_1\mu^k \; \mbox{ for all } \; k \geq 0.
    \]
    For the proof, our Lemma \ref{lme:crucial_lemma} can be applied in the context of the proof of \cite[Lemma 6.3]{MR1422257}, see in particular \cite[Eq. 6.19]{MR1422257};
		\item  If the initial point is chosen as in Theorem \ref{th:polynomial_conv}, then 
        \[
        \zeta \nu_k \|(z^k,s^k)\|_1 \leq 4 \beta (n+m) \mu^k \; \mbox{ for all } \;k \geq 0.
        \]
        The proof  follows using (E1)  and an analogous  proof of \cite[Lemma 6.4]{MR1422257}.
\end{enumerate}

\vspace{0.2cm}

Moreover, also in the case here considered, defining $D^k:=(Z^k)^{1/2}(S^k)^{-1/2}$, the following estimations about the norm of the Newton directions hold:

\vspace{0.2cm}

\begin{enumerate}[({N}1)]
	\item  There exists a constant $C_2$ such that 
    \[
    \| (D^k)^{-1}\Delta z^k \|  \leq C_2 (\mu^k)^{1/2}, \;\; \| D^k\Delta s^k \|  \leq C_2 (\mu^k)^{1/2}     \hbox{ for all } k \geq 0.
    \]
	For the proof use our Lemma \ref{lme:crucial_lemma} in the proof of \cite[Lemma 6.5]{MR1422257};  in particular in \cite[Eq. (6.30)]{MR1422257}   the sign ``$=$'' becomes ``$\leq$'';
	\item  If the initial point is chosen as in Theorem \ref{th:polynomial_conv}, then there exists a constant $\omega$ independent from $m+n$ such that 
	$$ \| (D^k)^{-1}\Delta z^k \|  \leq \omega (m+n) (\mu^k)^{1/2}, \;\; \| D^k\Delta s^k \|  \leq \omega (m+n) (\mu^k)^{1/2}     \hbox{ for all } k \geq 0 .$$
	The proof  follows using (N1) and an analogous  proof of \cite[Lemma 6.6]{MR1422257}).
\end{enumerate}

\vspace{0.2cm}

Finally, the proof of Theorem \ref{th:polynomial_conv} follows by observing that using (E1), (E2), (N1), and (N2), the statement of \cite[Lemma 6.7]{MR1422257} holds also in our case.

\section{Proofs from Subsection \ref{sec:coupled_local_convergence}} \label{sec:appendix_AI_neq0}

\subsection{Proof of Lemma \ref{lem:damped-lemma3}}

\textbf{(a)} Using the triangle inequality and \eqref{eq:bound-central-path}, it holds that 
\begin{align*}
\|v_\alpha - g(\mu^+)\| &= \|(1-\alpha)(v - g(\mu^+)) + \alpha(v^+ - g(\mu^+))\|, \notag \\
&\leq (1-\alpha)\|v - g(\mu^+)\| + \alpha\|v^+ - g(\mu^+)\|, \notag \\
&\leq (1-\alpha)\|v - g(\mu^+)\| + \alpha\frac{KL}{2}\|v - g(\mu^+)\|^2.
\end{align*}

\textbf{(b)} Similarly, using \eqref{eq:bound-solution}, we have 
\begin{align*}
\|v_\alpha - v^*\| &= \|(1-\alpha)(v - v^*) + \alpha(v^+ - v^*)\|, \notag \\
&\leq (1-\alpha)\|v - v^*\| + \alpha\|v^+ - v^*\|, \notag \\
&\leq (1-\alpha)\|v - v^*\| + \alpha\left(\frac{1}{2}\|v - v^*\| + \frac{\mu^+}{\mu^*}\frac{\delta^*}{2}\right), \notag \\
&= \left(1 - \frac{\alpha}{2}\right)\|v - v^*\| + \alpha\frac{\mu^+}{\mu^*}\frac{\delta^*}{2}.
\end{align*}
The fact that $v_\alpha \in B(v^*, \delta^*)$ follows from the fact that the right-hand side of \eqref{eq:damped-solution} is maximized at $\alpha = 1$ (the full step), which by Lemma~\ref{lem:original-lemma3} yields a value $< \delta^*$.

\subsection{Proof of Corollary \ref{rem:uniform_boundedness_rhs}}

As a consequence of Theorem \ref{th:IPM_well_definitness}, we can suppose the existence of a sequence of iterates  $\{(z^k,\lambda^k,s^k)\}_{k \in \mathbb{N}}$ produced by Algorithm \ref{alg:ipm} such that $$(z^k,\lambda^k,s^k) \in \mathcal{N}(\bar {\gamma},\underline{\gamma},\gamma_p,\gamma_d). $$
	Since by construction $(z^k)^\top s^k \leq (z^0)^\top s^0 $, we have from \eqref{eqn:neighborhood} that
	\begin{equation*}
    \begin{array}{rll}
\|Az^k-b\| & \leq & (z^0)^\top s^0/\gamma_p, \\[1.5ex]
 \|\hat{Q}z^k + \hat{A}^\top\lambda^k-s^k + \hat{c}\| & \leq & (z^0)^\top s^0/\gamma_d. 
    \end{array}
	\end{equation*}
	Moreover, we have
	\begin{equation*}
    \begin{array}{rll}
       \|S^kZ^k\mathbf{e}-\sigma \mu^k \mathbf{e}\| & \leq & \|S^kZ^k\mathbf{e}\|+\sigma \mu^k \|\mathbf{e}\|, \\[1.5ex]
			& \leq & \frac{\bar{\gamma}}{\sqrt{n+m}} (z^k)^\top s^k +  \frac{\sigma}{\sqrt{n+m}} (z^k)^\top s^k, \\[1.5ex]
            &\leq &\frac{\bar{\gamma}+\sigma}{\sqrt{n+m}} (z^0)^\top s^0.
    \end{array}
	\end{equation*}

\section{Min-Cost Flow Adversarial Attack} \label{sec:adverarial_attack_presentation}
For the sake of clarity, we will briefly present how the formulation \eqref{eq:min_flow_interdiction} is obtained and the related notational details. To this aim, consider a directed graph $G = (V, E)$, where $V$ is the set of vertices and $E$ is the set of edges. Define, moreover, the incidence matrix $P$ of $G$ as the matrix  $P \in \mathbb{R}^{|V| \times |E|}$ with entries defined as
\begin{equation} \label{eq:incidence}
P_{ve} = \begin{cases}
+1 & \mbox{ if } e = (w,v) \text{ for some } w \in V,\\
-1 & \mbox{ if }  e = (v,w) \text{ for some } w \in V, \\
0 & \mbox{ otherwise}.
\end{cases}
\end{equation}

Given $b \in \mathbb{R}^{|V|}$ such that $\sum_{v \in V} b_v =0$, where for{$v \in V$, $b_v>0$} is an exogenous source whereas $b_v<0$ is exogenous demand, and a capacity vector $ u \in \mathbb{R}^{|E|}$,  the min-cost flow problem on $G$ can be formulated as 

\begin{align*}
 \rho_{cf} :=\min & \; \sum_{e \in E} c_e(y_e) y_e  \\
\mbox{s.t.} \; & P{y} = b, \\
& 0 \leq  {y} \leq {p}, \nonumber \\
\end{align*}
where ${y} := \{y_e\}_{e \in E} \in \mathbb{R}^{|E|}$ and  $c_e(y_e)$ denote the cost of moving one unit of flow across edge $e$. In the following, we fix two nodes, source and sink, $s,t$, and a sink demand $b_t=r_t \Rightarrow b_s=-r_t$. Let us now consider an extension to such a problem, where, in addition to the regular network user, \textit{the follower}, there is an adversary, \textit{the leader}, who will inject flows to the network in order to force the regular user to use more expensive paths. Let ${x} := \{x_e\}_{e \in E}$ denote flows controlled by the adversary, with a total attack-budget $a_b > 0$. Then the problem of the adversary can be formulated as 
\begin{align*}
\max_{\substack{0 \leq {x} \leq {p} \\ {e}^\top{x} = a_b}}  \;\; & \min_{\substack{0 \leq {y} \leq {p} } } \sum_{e \in E} c_{e}(y_e, x_{e})y_e \\
&  {x} + {y} \leq {p}, \nonumber \\
& {\sum_{ e\in E:\, e =(v, t)}y_e = r_t},  \nonumber  \\
& {P^{s,t}}{y} = b,  \nonumber 
\end{align*}
where, for $e\in E$, $c_{e}(y_e, x_{e})$ is related to the cost of routing both $x_{e}$ and $y_{e}$ onto the network and {$P^{s,t}$} is the incidence matrix defined {in \eqref{eq:incidence}}, but having had removed the rows $s,t$. In the following, we will consider $c_{e}(y_e, x_{e})=w_e(y_e+x_e)$, i.e., the cost of routing the flow is a function of the amount of flow there routed. It is important to note that, using the particular form of $c_e$ and eliminating redundant constraints, the above problem can be written as 

\begin{align*} 
\max_{\substack{({x}, z) \geq 0  \\ {e}^\top{x} = a_b}}  \;\; & \min_{\substack{ y \geq 0 }} \sum_{e \in E} w^\top y + y^\top \diag(w)y + y^\top \diag(w) x -\frac{\eta}{2}\|x\|^2 -\frac{\eta}{2}\|z\|^2 \\
&  {x} + {y} +z = {p}, \nonumber \\
& {\sum_{ e\in E:\,e =(v, t)}}    y_e = r_t,  \nonumber  \\
& P^{s,t}{y} = 0, \nonumber
\end{align*}
where we note that the variable $z$ belongs to the attacker, and we added the term $\frac{\eta}{2}\|x\|^2+\frac{\eta}{2}\|z\|^2$ to ensure strong convexity for the outer problem as done in \cite{MR4654111,zhang2025iterativeminimaxgamescoupled,hu2024minimizationapproachminimaxoptimization}. 

\subsection{Correlation figures.}  Figures \ref{fig:network_attack_c_by_budget}, \ref{fig:network_attack_c_by_budget_1}, and  \ref{fig:network_attack_c_by_budget_2} show the correlations among $\|r_b\|$ and $\rho$.

\begin{figure}[htbp!]
    \centering
    \begin{subfigure}[b]{\textwidth}
        \centering
        \includegraphics[width=1\textwidth]{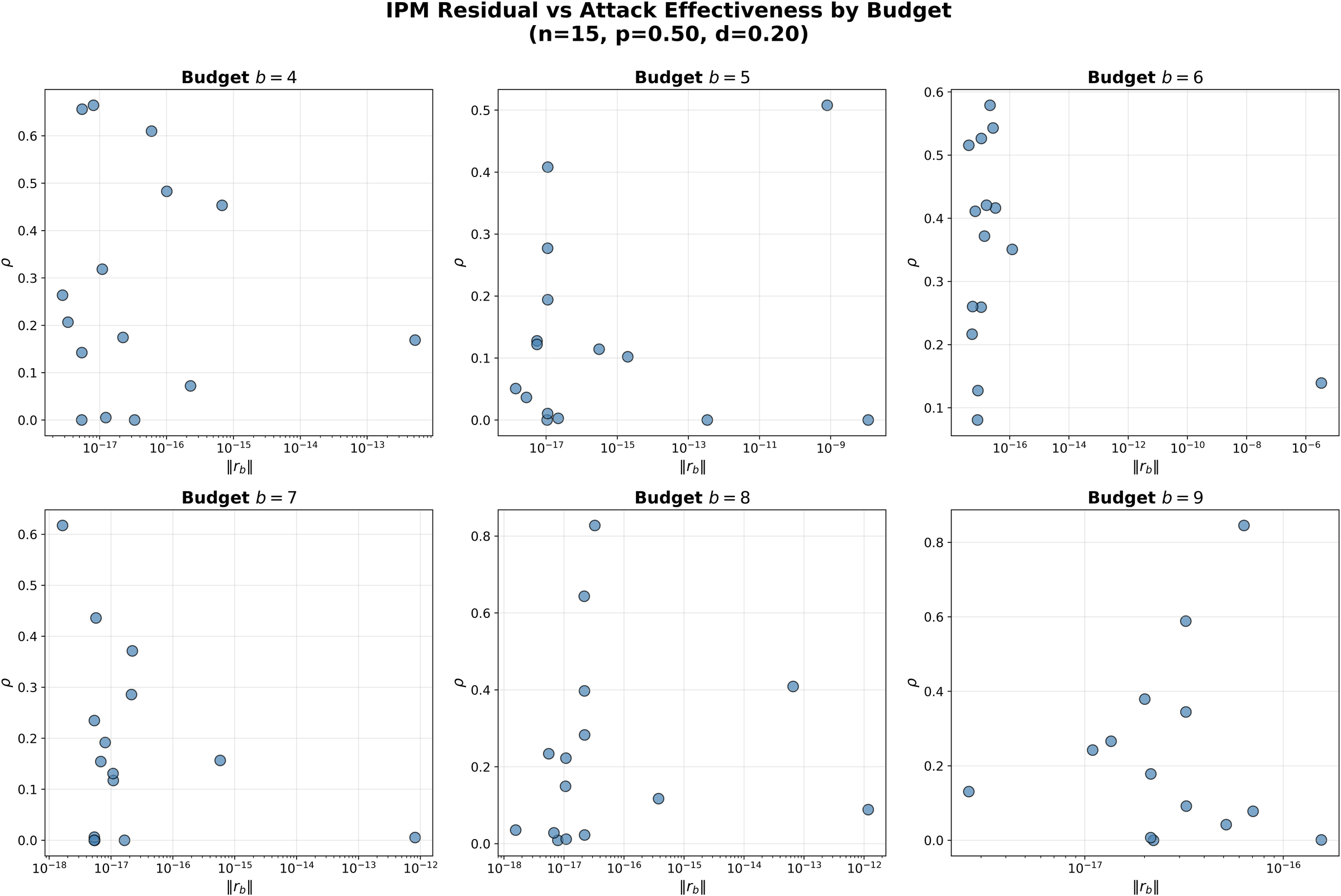} 
        \caption{n=15, p=0.50, d=0.20}
    \end{subfigure} \\
       \caption{Correlation among $\|r_b\|$ and $\rho$.}
    \label{fig:network_attack_c_by_budget}
\end{figure}

\begin{figure}[htbp!]
    \centering
     \begin{subfigure}[b]{\textwidth}
        \centering
       \includegraphics[width=1\textwidth]{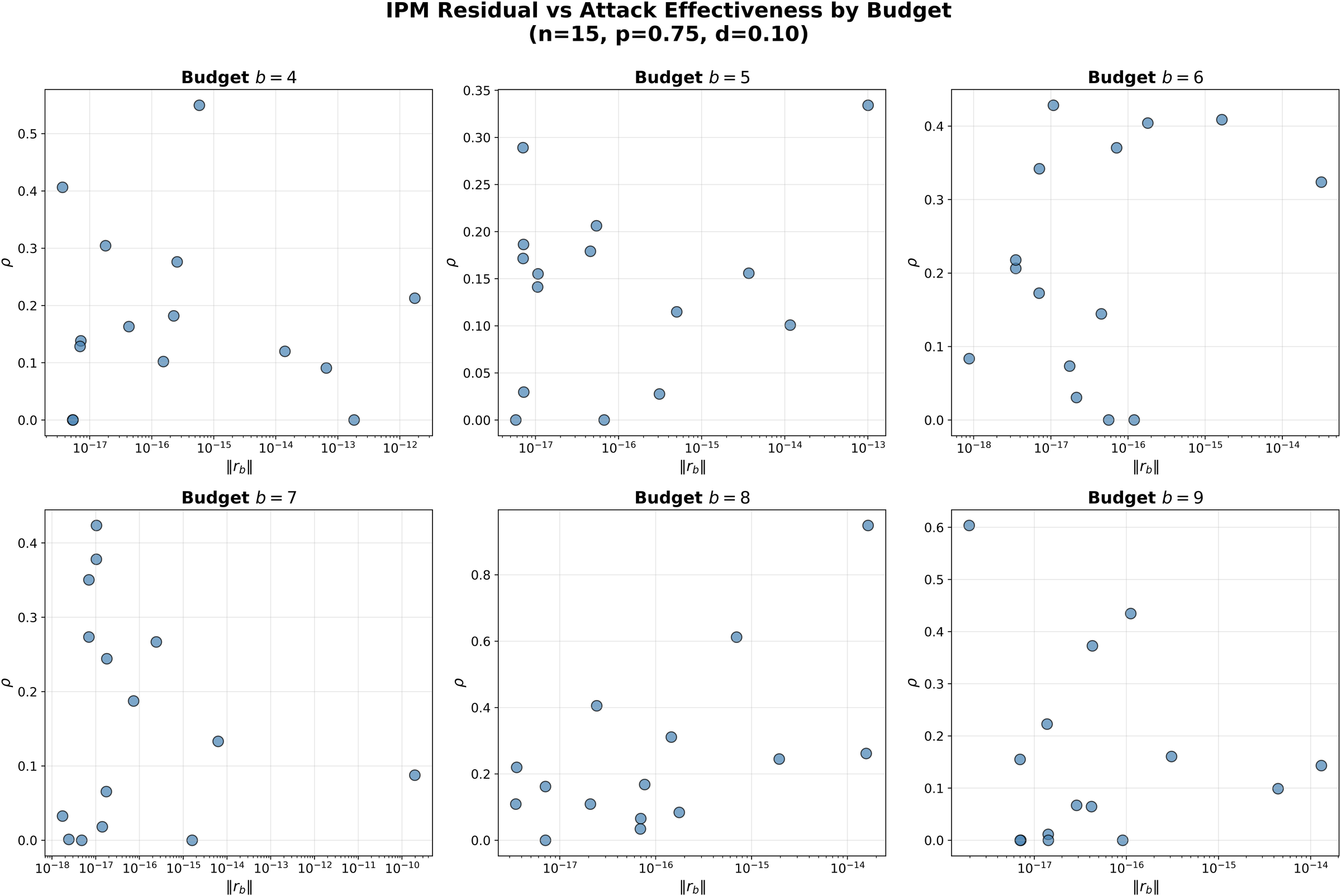} 
        \caption{n=15, p=0.75, d=0.10}
    \end{subfigure}
       \caption{Correlation among $\|r_b\|$ and $\rho$.}
    \label{fig:network_attack_c_by_budget_1}
\end{figure}

 \begin{figure}[htbp!]
    \centering
     \begin{subfigure}[b]{\textwidth}
        \centering
        \includegraphics[width=1\textwidth]{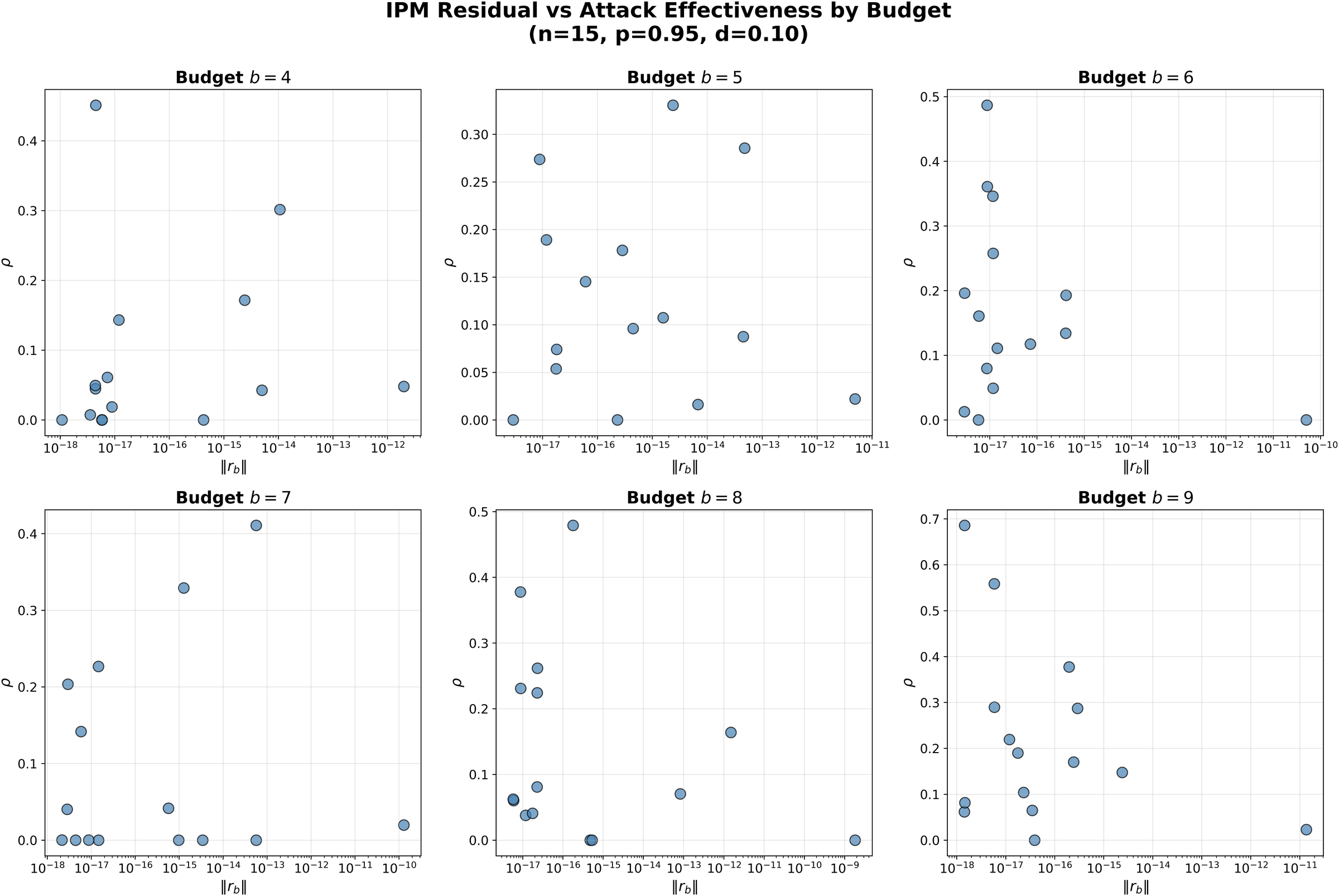}
        \caption{n=15, p=0.95, d=0.10}
    \end{subfigure}
       \caption{Correlation among $\|r_b\|$ and $\rho$.}
    \label{fig:network_attack_c_by_budget_2}
\end{figure}

\end{document}